\documentclass[12pt]{article}

\usepackage{amsmath,amsthm,amsfonts,txfonts,amssymb,amscd,epsf,epsfig,psfrag,enumerate,relsize}
\usepackage[mathscr]{eucal}
\usepackage{yhmath}
\usepackage{graphicx,epstopdf}
\usepackage{float}
\usepackage{wasysym}



\raggedbottom \textwidth 6in \oddsidemargin .25in \evensidemargin.25in
\textheight 9in \topmargin -0.5in

\title{Lamination links in 3-manifolds }

\author{Ulrich Oertel}

\date{Version 2, December, 2018}

\newenvironment{tightenum}{
\begin{enumerate}[(1)]
  \setlength{\itemsep}{1pt}
  \setlength{\parskip}{0pt}
  \setlength{\parsep}{0pt}
}{\end{enumerate}}
\newenvironment{tightenumi}{
\begin{enumerate}[(i)]
  \setlength{\itemsep}{1pt}
  \setlength{\parskip}{0pt}
  \setlength{\parsep}{0pt}
}{\end{enumerate}}

\newtheorem{thm}{Theorem}[section] \newtheorem{lemma}[thm]{Lemma}

\newtheorem{corollary}[thm]{Corollary}

\newtheorem{proposition}[thm]{Proposition}
 \newtheorem*{claim*}{Claim}

 \theoremstyle{definition}
\newtheorem{defn}[thm]{Definition}
\newtheorem{defns}[thm]{Definitions} 
 \newtheorem{ex}[thm]{Example}

\newtheorem{remarks}[thm]{Remarks} 
\newtheorem{remark}[thm]{Remark}
\theoremstyle{remark}


\begin{document}

\maketitle


\def\HDS{half-disk sum}

\def\PLKength{\text{Length}}

\def\Area{\text{Area}}
\def\Im{\text{Im}}
\def\im{\text{im}}
\def\cl{\text{cl}}
\def\rel{\text{ rel }}
\def\irred{irreducible}
\def\half{spinal pair }
\def\spinal{\half}
\def\spinals{\halfs}
\def\halfs{spinal pairs }
\def\reals{\mathbb R}
\def\rationals{\mathbb Q}
\def\complex{\mathbb C}
\def\naturals{\mathbb N}
\def\integers{\mathbb Z}
\def\id{\text{id}}
\def\Chi{\raise1.5pt \hbox{$\chi$}}
\def\cr{\tt\large}

\def\proj{P}
\def\hyp {\hbox {\rm {H \kern -2.8ex I}\kern 1.15ex}}

\def\Diff{\text{Diff}}

\def\weight#1#2#3{{#1}\raise2.5pt\hbox{$\centerdot$}\left({#2},{#3}\right)}
\def\intr{{\rm int}}
\def\inter{\ \raise4pt\hbox{$^\circ$}\kern -1.6ex}
\def\Cal{\cal}
\def\from{:}
\def\inverse{^{-1}}
\def\Max{{\rm Max}}
\def\Min{{\rm Min}}
\def\fr{{\rm fr}}
\def\embed{\hookrightarrow}
\def\Genus{{\rm Genus}}
\def\Z{Z}
\def\X{X}

\def\roster{\begin{enumerate}}
\def\endroster{\end{enumerate}}
\def\intersect{\cap}
\def\definition{\begin{defn}}
\def\enddefinition{\end{defn}}
\def\subhead{\subsection\{}
\def\theorem{thm}
\def\endsubhead{\}}
\def\head{\section\{}
\def\endhead{\}}
\def\example{\begin{ex}}
\def\endexample{\end{ex}}
\def\ves{\vs}
\def\mZ{{\mathbb Z}}
\def\M{M(\Phi)}
\def\bdry{\partial}
\def\hop{\vskip 0.15in}
\def\hip{\vskip0.1in}
\def\mathring{\inter}
\def\trip{\vskip 0.09in}
\def\PML{\mathscr{PML}}
\def\J{\mathscr{J}}
\def\A{\mathscr{A}}
\def\G{\mathscr{G}}
\def\F{\mathscr{F}}
\def\H{\mathscr{H}}
\def\C{\mathscr{C}}
\def\S{\mathscr{S}}
\def\S{\mathscr{S}}
\def\V{\mathscr{V}}
\def\CT{\mathscr{CT}}
\def\WS{\mathscr{WS}}
\def\PS{\mathscr{PS}}
\def\I{\mathscr{I}}
\def\PI{\mathscr{PI}}
\def\T{\mathscr{T}}
\def\PT{\mathscr{PT}}
\def\WT{\mathscr{WT}}
\def\PWT{\mathscr{PWT}}
\def\E{\mathscr{E}}
\def\K{\mathscr{K}}
\def\L{\mathscr{L}}
\def\PC{\mathscr{PC}}
\def\PWC{\mathscr{PWC}}
\def\W{\mathscr{W}}
\def\WC{\mathscr{WC}}
\def\PWC{\mathscr{PWC}}
\def\RW{\mathscr{RW}}
\def\RC{\mathscr{RC}}
\def\PLK{\mathscr{PLK}}
\def\CB{\mathscr{CB}}
\def\B{\mathscr{B}}
\def\PCB{\mathscr{PCB}}
\def\PT{\mathscr{PT}}
\def\W{\mathscr{W}}
\def\MC{\mathscr{MC}}
\def\CIB{\mathscr{CIB}}

\def\PMF{\mathscr{PMF}}
\def\OO{\mathscr{O}}
\def\OM{\mathscr{OM}}
\def\POM{\mathscr{POM}}
\def\BOM{\mathscr{BOM}}
\def\IM{\mathscr{IM}}
\def\PIM{\mathscr{PIM}}
\def\BIM{\mathscr{BIM}}
\def\PBIM{\mathscr{PBIM}}
\def\OMR{\mathscr{OMR}}
\def\POMR{\mathscr{POMR}}
\def\Preals{\mathscr{P}\mathbb R}
\def\PAA{\P\hskip -1mm\left[\reals^\A\times\reals^\A\right]}
\def\P{\mathscr{P}}
\def\suchthat{|}
\newcommand{\bigins}{\mathop{\mathlarger{\mathlarger\circledwedge}}}
\newcommand\invlimit{\varprojlim}
\newcommand\congruent{\equiv}
\newcommand\modulo[1]{\pmod{#1}}
\def\ML{\mathscr{ML}}
\def\Stack{\mathscr{T}}
\def\M{\mathscr{M}}
\def\A{\mathscr{A}}
\def\R{\mathscr{R}}
\def\union{\cup}
\def\atlas{\mathscr{A}}
\def\Int{\text{Int}}
\def\frontier{\text{Fr}}
\def\composed{\circ}

\def\abs{\odot}
\def\DS{\breve S}
\def\DL{\breve L}
\def\DBL{\breve{\bar L}}
\def\II{[0,\infty]}
\def\equiv{\hskip -3pt \sim}
\def\Chim{\Chi_-}

\def\split{\prec}
\def\pinch{\succ}
\def\OB{\mathbb O}
\def\FB{\mathbb F}
\def\SB{\mathbb S}

\def\TB{\mathbb T}
\def\OBB{{\mathbb S}\kern -6pt\raisebox{1.3pt}{--} \kern 2pt}
\def\Infty{\hbox{$\infty$\kern -8.1pt\raisebox{0.2pt}{--}\kern 1pt}}
\def\ens{\bar\circledwedge}
\def\ins{\circledwedge}
\def\rins{\circledvee}
\def\rens{\bar\circledvee}
\def\tmax{$\tau$-maximal-$\Chi$ branched surface}
\def\maxc{maximal-$\Chi$}
\def\isom{\cong}
\def\bov{{\bf v}}
\def\boh{{\bf h}}
\def\boa{{\bf a}}
\def\boc{{\bf c}}
\def\bod{{\bf d}}
\def\bob{{\bf b}}
\def\bow{{\bf w}}
\def\bou{{\bf u}}
\def\boy{{\bf y}}
\def\bor{{\bf r}}
\def\bot{{\bf t}}
\def\boq{{\bf q}}
\def\boz{{\bf z}}
\def\bos{{\bf s}}

\vskip 0.3in
\begin{abstract}  We introduce and define ``oriented framed measured lamination links" in a 3-manifold $M$.  These generalize oriented framed links in 3-manifolds, and are confined to 2-dimensional improperly embedded subsurfaces of the 3-manifold.  Just as some framed links bound Seifert surfaces, so also some framed lamination links bound 2-dimensional measured and oriented  ``Seifert laminations."   We show that any lamination link which bounds a 2-dimensional Seifert lamination, bounds a ``taut" Seifert lamination, i.e. one of maximum Euler characteristic subject to the condition that the Seifert lamination is carried by an aspherical branched surface, and satisfying further conditions.   The maximum Euler characteristic function is continuous on certain parametrized families of lamination links carried by a train track neighborhood.   Taut Seifert laminations generalize minimal genus Seifert surfaces.
\end{abstract}

\section{Introduction.}  Is there a space of oriented framed links in $S^3$?  This would be a space which includes points representing all classical oriented links in $S^3$.   The space would be non-compact and would include additional points beyond points representing classical framed links.  In this paper, we do not answer the question whether a space exists, but we prepare the ground.  Our experience of measured lamination spaces in surfaces will serve as a guide, though the end result is very different.  ``Links" in a surface are curve systems, and we know that the curve complex for a surface can be enlarged to a projective measured lamination space.  Thus it is reasonable to try to define knotted and or linked measured laminations embedded in a 3-manifold, which we will call ``framed oriented measured lamination links," or often just ``lamination links." Perhaps the most distinctive property of oriented knots and links is that they bound Seifert surfaces, so we shall pay special attention to framed measured lamination links which bound 2-dimensional oriented measured ``Seifert laminations."   It is best to describe the kind of object we seek using an example:

\begin{example} \label{IntroEx} Figure \ref{SeifertLam} shows an oriented branched surface $B$ in $S^3$.  We can assign weights to the sectors of the branched surface satisfying the branch equations at branch curves.  Suppose the weights are $x,y,z$ as shown, and the weight vector is $\bov=(x,y,z)$, satisfying $2z+2y=3x$.  If we assign integer weights, $B(\bov)$ is an oriented surface, determined by the weights, whose boundary is an oriented link (with a framing).  Even if the weights are rationally related, up to projective equivalence of weight vectors, $\bdry B(\bov)$ represents an oriented link.  But there are, of course, weight vectors satisfying the branch equations whose entries are not rationally related.  Such a  weight vector represents a ``Seifert lamination" whose boundary is an oriented measured lamination link which is not a classical link.  

To picture the lamination $B(\bov)$, it is best to replace $B$ by a certain fibered neighborhood we call $V(B)$ as shown in Figure \ref{BranchedNeighborhood}.  There is a projection $\pi:V(B)\to B$, and $\pi\inverse(\bdry B)$ is a 2-dimensional train track neighborhood which we will call $V(\tau)$, where $\tau=\bdry B\embed S^3$ is a train track.   (Throughout this paper, the symbol $\embed$ means ``embedded in.")  Observe that $V(\tau)$ is (transversely) oriented, since it lies in $\bdry V(B)$, which has an outward orientation.  Observe also that $\tau$ has an orientation induced by the orientation of $B$.   We say $V(\tau)$ with its orientation is a ``framing" of the oriented train track $\tau$ in $S^3$, which is determined here by the embedding of $B$, though in general a framing can be chosen at random.  The weights on sectors of $B$ give weights on the train track $\tau$ which also satisfy branch or switch equations, and we can imagine the knotted lamination as $V(\tau)$ with the width of different parts of $V(\tau)$ given by the weights of an invariant weight vector $\bow$ for $\tau$, see Figure \ref{LamLink}.  The framed train track $\tau$ together with the invariant weight vector $\bow$ for $\tau$ determines a ``prelamination" $V_\bow(\tau)$.   This is a singular foliation of $V(\tau)$, with leaves transverse to fibers of $V(\tau)$ and with a transverse measure with the property that the measure of a fiber corresponding to a point in the interior of a segment of $\tau$ equals the weight $w_k$ assigned by $\bow$ to the segment.  If $s$ is a segment of $\tau$, $\pi\inverse(\intr(s))$ is foliated as a product and $V_\bow(\tau)$ has a singular foliation (with singularities on $\bdry V(\tau)$) as shown in Figure \ref{BranchedNeighborhood}.  In much the same way the invariant weight vector $\bov$ for $B$ determines a prelamination representing the Seifert lamination which we denote $B(\bov)$ or $V_\bov(B)$.  Here $V_\bov(B)$ is determined by $\bov$ and the embedding of $B$ in $M$.  

In this example the embedding of $V_\bow(\tau)$ embedded in $S^3$ represents a ``measured lamination link," and $V_\bov(B)$ embedded in $M$ represents a ``Seifert lamination" for the lamination link.  Observe that $\tau$ is oriented and $V(\tau)$ is oriented as a surface.  In this paper, we shall define lamination links requiring both of these orientations.  One could omit the requirement that $\tau$ be oriented for a more permissive definition of measured lamination links.

\begin{figure}[ht]
\centering
{\includegraphics{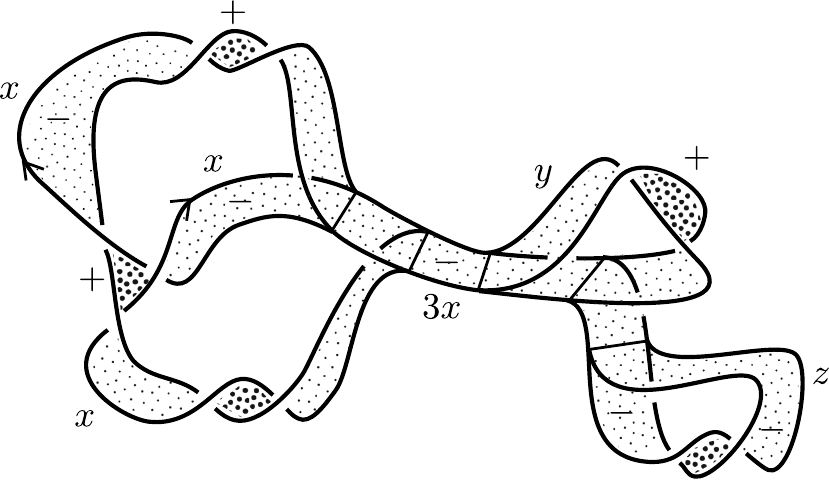}}
\caption{\footnotesize The branched surface $B$ carrying Seifert laminations for framed lamination links.}
\label{SeifertLam}
\end{figure}

\begin{figure}[ht]
\centering
{\includegraphics{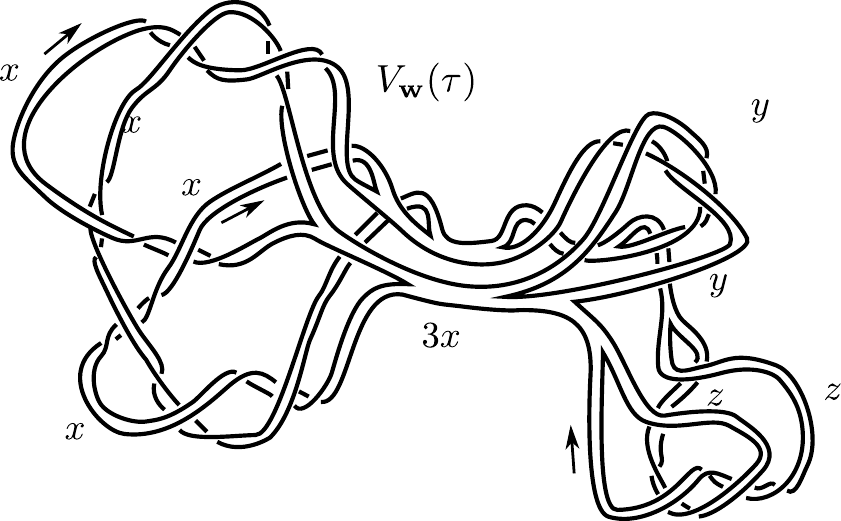}}
\caption{\footnotesize A lamination link.}
\label{LamLink}
\end{figure}

\begin{figure}[ht]
\centering
\scalebox{1}{\includegraphics{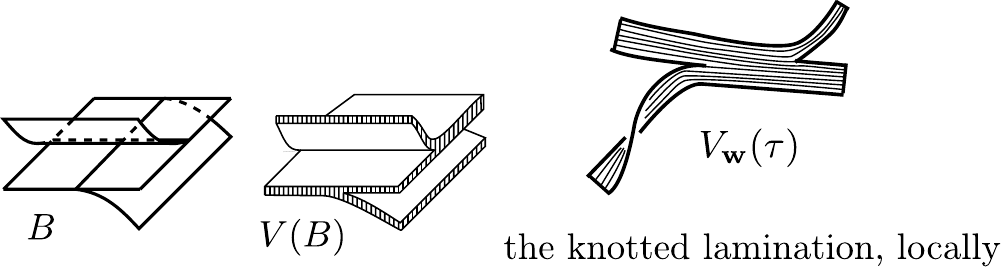}}
\caption{\footnotesize The branched surface $B$, its neighborhood $V(B)$, and the knotted lamination represented as $V_\bow(\tau)$ with weights giving ``widths."}
\label{BranchedNeighborhood}
\end{figure}

\end{example} 

\begin{remarks}  One might suppose that a more suitable definition of a lamination link would involve 3-dimensional regular neighborhoods of train tracks, with 2-dimensional disk fibers, and with the lamination transverse to the disk fibers.  Such a definition is much too permissive; it allows uncontrolled twisting and braiding in the fibered neighborhood.  Our lamination links are required to live in a 2-dimensional surface, namely a 2-dimensional train track neighborhood $V(\tau)$.    \end{remarks}

A fundamental invariant of an oriented knot is the minimal genus of a Seifert surface.  The genus of a Seifert surface can also be expressed in terms of the Euler characteristic of the Seifert surface, provided it contains no disks or spheres.  The Euler characteristic can be defined for measured 2-laminations (represented by prelaminations), see Section \ref{Preliminaries}.  Therefore, assuming a lamination link bounds some measured Seifert lamination, it is natural to ask whether there is a Seifert lamination with optimal Euler characteristic.   A Seifert lamination $B(\bov)$ is ``optimal" if it has maximal Euler characteristic subject to the condition that $B$ carry no sphere.    There is another manuscript, see \cite{UO:SeifertAlg}, which gives a kind of Seifert algorithm for constructing Seifert laminations when possible.   This addresses the question whether a given framed lamination link bounds a Seifert lamination.

For measured laminations carried by a branched surface $B$, the Euler characteristic is a linear function on the entries of a weight vector $\bov$ on $B$.  We write the Euler characteristic of a measured lamination $B(\bov)$ as $\Chi(B(\bov))$.   Alternatively, we can denote a measured 2-dimensional lamination $B(\bov)$ more abstractly as $(\Lambda,\nu)$, where $\nu$ is a transverse measure, assigning a measure to each interval intersecting $\Lambda$ transversely.  The abstract lamination $\Lambda$ is obtained by performing an ``infinite splitting" on the prelamination.

In order to give a formal definition of ``framed oriented measured lamination links," we must first define certain terms related to prelaminations.

\begin{defn}\label{SplittingDef} Suppose $V_\bow(\tau)$ is a prelamination, either abstract or embedded in a 3-manifold.  A {\it preleaf} of the prelamination is a leaf of the foliation of $V_\bow(\tau)\setminus \text{\{cusps\}}$ completed by including any cusps at one or both ends, see Figure \ref{Leaves}.   Let $C$ denote the set of cusps in $\bdry V(\tau)$.  A preleaf of the singular foliation on $V(\tau)$ determined by the weight vector $\bow$ is called a {\it separatrix} if its interior is contained in the interior of $V(\tau)$ and it has at least one end at a cusp point on $\bdry V(\tau)$.  The prelamination $V_\bow(\tau)$ is {\it irreducible} if it is fully carried and there are no compact separatrices, homeomorphic to a closed interval with each end at a cusp point.  A measured prelamination $V_\bow(\tau)$ is {\it fully carried} by $V(\tau)$ if all the entries of $\bow$ are strictly positive.

A {\it splitting} of the prelamination $V_\bow(\tau)$ is a prelamination $V_{\bow'}(\tau')$ obtained by cutting $V(\tau)$ on a compact submanifold (disjoint from $\bdry V(\tau)\setminus\{\text{cusps}\}$) of the union of preleaves.  We also say $\tau'$ is a {\it splitting} of $\tau$. 
\end{defn}





\begin{figure}[ht]
\centering
\scalebox{0.7}{\includegraphics{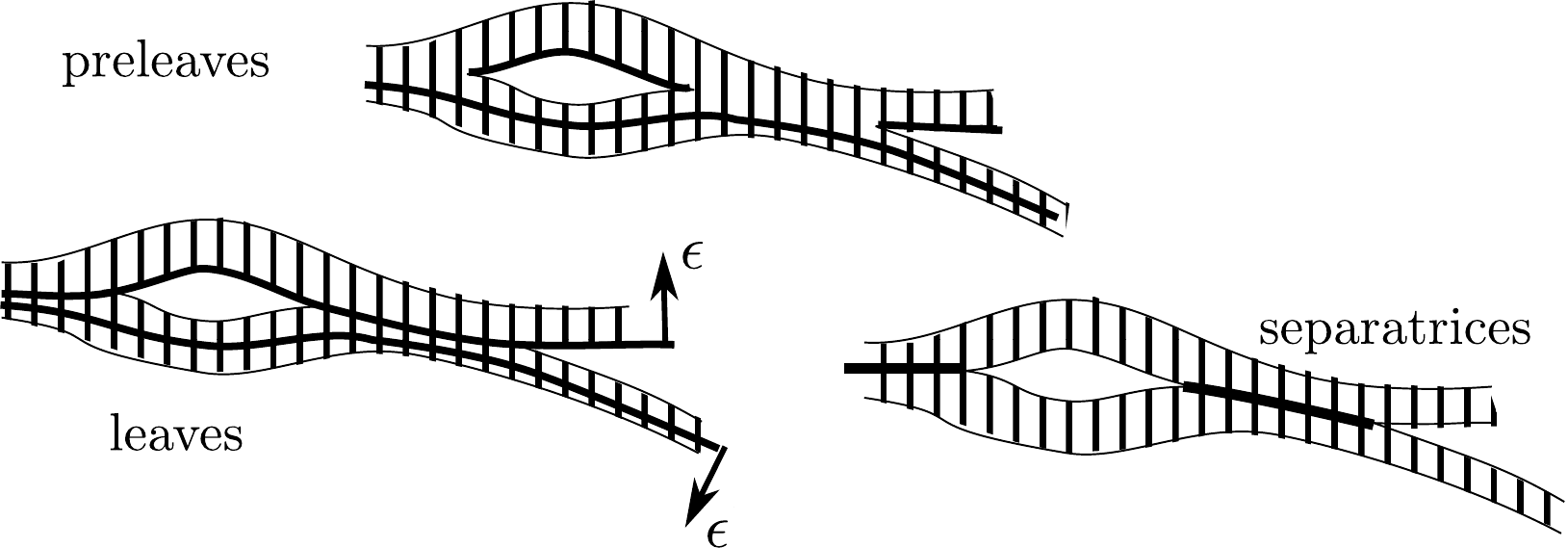}}
\caption{\footnotesize Preleaves and leaves of a prelamination."}
\label{Leaves}
\end{figure}

We will define the leaves of a prelamination following the approach in \cite{UO:MeasuredLaminations}.  For every point $x\in V_\bow(\tau)$ there are two possibilities for a transverse orientation $\epsilon$ at $x$.   We consider the set of pairs $(x,\epsilon)$ such that at points of $x\in \bdry V_\bow(\tau)\setminus C$ the transverse orientation points {\it into} the neighborhood $V_\bow(\tau)$.  

\begin{defn} \label{LeafDef} Two pairs $(x_0,\epsilon_0)$ and $(x_1,\epsilon_1)$ are in the same {\it leaf} of $V_\bow(\tau)$ if there is a path $(x(t),\epsilon(t))$, $0\le t\le 1$ such that $(x(i),\epsilon(i))=(x_i,\epsilon_i)$ for $i=0,1$.   

Two prelaminations  represented as $V_{\bow_1}(\tau_1)$ and $V_{\bow_2}(\tau_2)$ are {\it equivalent} if there is a splitting (in $M$) $V_{\bow_0}(\tau_0)$ of  $V_{\bow_1}(\tau_1)$ which, up to isotopy in $M$, is also a splitting of $V_{\bow_2}(\tau_2)$.   The equivalence classes are called {\it measured laminations}.  Finally $V_{\bow_1}(\tau_1)$  is {\it projectively equivalent} to $V_{\lambda \bow_1}(\tau_1)$  for any $0<\lambda<\infty$. 
\end{defn}

The definition of a leaf says roughly that locally in a foliation chart for $V_\bow(\tau)$ we have two leaves coinciding at a preleaf in the chart, except at $\bdry V_\bow(\tau)\setminus C$ where locally there is only one leaf corresponding to a preleaf, see Figure \ref{Leaves}.   If $B\embed M$ is a branched surface (possibly improperly) embedded in a 3-manifold $M$, the {\it leaves} of $B(\bov)$ are defined in exactly the same way, keeping in mind that $B(\bov)$ represents a certain $V_\bov(B)$ completely determined by $\bov$ and the embedding of $B$ in $M$.   Also, 2-dimensional measured laminations are defined as equivalence classes of prelaminations, as before.

There is another more common point of view on measured laminations, see Definition \ref{Neighborhood} and the discussion preceding it.    A measured lamination is a closed foliated subset $L$ of a manifold with an invariant transverse measure $\mu$.   If $(L,\mu)$ is a measured lamination in embedded in $V(\tau)$ transverse to fibers, it induces an invariant weight vector $\bow$ on $\tau$.   For all practical purposes, this is the same as an equivalence class of prelaminations $V_\bow(\tau)$.  The leaves are not quite the same; a leaf (or two leaves) of the representative prelamination may double-cover a leaf of the lamination $(L,\mu)$.  We will pass from one point of view to the other without comment.

There is an obvious notion of ``pinching in $M$."   Suppose $V_{\bow_i}(\tau_i)\embed M$ is a prelamination embedded in $M$, $i=0,1$.   Suppose there is a homotopy $V_{\bow_t}(\tau_t)$ of $V_{\bow_0}(\tau_0)$, $0\le t\le 1$, which is an isotopy for $0\le t<1$, and a pinching map $V_{\bow_0}(\tau_0)\to V_{\bow_1}(\tau_1)$ when $t=1$.  Then we say $V_{\bow_1}(\tau_1)$ is a {\it pinching in $M$} of $V_{\bow_0}(\tau_0)$, while $V_{\bow_0}(\tau_0)$ is a {\it splitting in $M$ of  $V_{\bow_1}(\tau_1)$}. 

\begin{defn} \label{KnottedDef1} 

  
 Suppose that we are given two prelaminations $V_{\bow_1}(\tau_1)$ and $V_{\bow_2}(\tau_2)$ both embedded in $M$, with both $\tau_i$ and $V(\tau_i)$ oriented.  They are {\it equivalent} if there is a splitting  in $M$, $V_{\bow_0}(\tau_0)$ of  $V_{\bow_1}(\tau_1)$ which, up to isotopy in $M$ is also a splitting in $M$, of $V_{\bow_2}(\tau_2)$.  A {\it framed measured oriented lamination link} is then the equivalence class of some $V_\bow(\tau)$.   We will sometimes refer to a  framed measured oriented lamination link simply as a {\it link}.   We say $V_\bow(\tau)$ {\it represents} the link.  

 If  $V_\bow(\tau)$ represents a lamination link and is irreducible and connected, the we say the link  is a {\it knot}.

A {\it trivial leaf}  is a closed leaf which bounds an embedded disk whose interior is disjoint from the lamination.   An oriented link in the usual sense will be called a {\it classical link}, and a classical link with a framing will be called a {\it classical framed link}.

A branched surface $B\embed M$ is {\it aspherical} if it does not carry any sphere.

Suppose $B\embed M$ is an oriented branched surface, and $\bdry B=\tau$ has the induced orientation.  An oriented 2-dimensional measured lamination $(\Lambda,\nu)$ represented as $V_\bov(B)$ is called a {\it Seifert lamination} for the link represented as  $V_\bow(\tau)$ if $V_\bow(\tau)$  is the oriented boundary of $V_\bov(B)$.  If the link  $V_\bow(\tau)$ has a Seifert lamination, we say the link {\it bounds}.  For any oriented $V_\bow(\tau)$ embedded in $M$, with $\tau$ also oriented, if $V_\bow(\tau)$ bounds,  we define $X( V_\bow(\tau))=\sup\{\Chi(V_\bov(B))\}$ where the supremum is over 2-dimensional laminations $V_\bov(B)$ carried by aspherical oriented branched surfaces $B\embed M$ and with $\bdry(V_\bov(B))=  V_{\bow'}(\tau')$ where   $V_{\bow'}(\tau')$ is equivalent to $V_\bow(\tau)$.  The lamination $V_\bov(B)$ is {\it maximal-$\Chi$} if it achieves the above supremum, and the supremum is finite.

  A {\it peripheral} (framed, measured, oriented) link in $M$ is a link $V_\bow(\tau)$ with $V(\tau)$ embedded in $\bdry M$ or isotopic into $\bdry M$.  \end{defn}


\vskip 0.2in

Peripheral links are easier to deal with, and they give a starting point for studying arbitrary links.  Observe that if $\tau\embed \bdry M$, there is an obvious choice of framing for $\tau$, namely we can take $V(\tau)$ to be a regular neighborhood in $\bdry M$ of $\tau$, and we are in a situation where the invariant weight vector on $\tau$ determines the peripheral link, so we can write $\tau(\bow)$ instead of $V_\bow(\tau)$ to denote the prelamination determined by the weights.   

The following theorem says that we can find a maximal-$\Chi$ Seifert lamination for any peripheral lamination link carried by $\tau\embed \bdry M$, provided the link bounds some Seifert lamination.   In fact, we can arrange that the Seifert lamination is carried by a branched surface with further nice properties, namely a {\it $\tau$-taut branched surface}, see Definition \ref{TautBranched}, which implies not only that the Seifert lamination is maximal-$\Chi$, but also that the leaves of the Seifert lamination are $\pi_1$-injective.  (In general, if $f:Y\embed X$ is an embedding of one topological space in another, we say $Y$ is {\it $\pi_1$-injective} if the induced map $f_*:\pi_1Y)\to \pi_1(X)$ is an injection.)  The Seifert laminations carried by $\tau$-taut branched surfaces are called {\it taut Seifert laminations}.

\begin{thm}  Given a peripheral link represented as $V_\bow(\tau)\embed \bdry M$ which bounds some Seifert lamination, there exists a taut Seifert lamination represented as $V_\bov(B)=B(\bov)$, $\bdry V_\bov(B)=V_\bow(\tau)$, where $B$ is $\tau$-taut.   In particular, $B(\bov)$ is maximal-$\Chi$ and every leaf of $B(\bov)$ is $\pi_1$-injective,
\end{thm}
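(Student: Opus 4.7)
The plan is to start with any Seifert lamination $V_{\bov_0}(B_0)$ for the link, which exists by hypothesis, and modify its carrying branched surface through a finite sequence of splittings and surgeries that never decrease Euler characteristic, terminating with a $\tau$-taut branched surface. This mirrors the classical argument (Gabai, Scharlemann) that among Seifert surfaces of minimum genus one can find a taut one, transposed into the branched-surface/lamination setting developed above.

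First I would establish that $X(V_\bow(\tau)) < \infty$ and is achieved. On any aspherical branched surface $B \embed M$, the space of invariant weights $\bov$ satisfying the switch equations with $\bdry \bov = \bow$ is a polytope; asphericity forces every closed surface carried by $B$ to have $\Chi \le 0$, so adding any ``interior cycle" to $\bov$ does not increase $\Chi$, and the maximum on the slice with fixed boundary $\bow$ is attained and bounded in terms of $\bow$ and the topology of $M$. A uniform bound across all aspherical branched surfaces follows from the fact that any two measured laminations with the same boundary can, after splitting, be carried by a common branched surface; thus I pass to a maximizing sequence carried by a single $B$, take a limit in its compact weight polytope, and obtain a maximal-$\Chi$ Seifert lamination $V_{\bov_1}(B_1)$ with $\bdry V_{\bov_1}(B_1) = V_\bow(\tau)$.

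Next I would improve $B_1$ to be $\tau$-taut. If $B_1$ fails a condition of $\tau$-tautness, for example if some leaf carries a compressing disk, a monogon, or a disk of contact, or if there is a Reeb-type annulus, then the standard branched-surface surgery produces a new aspherical branched surface $B_1'$ still carrying a Seifert lamination with boundary $V_\bow(\tau)$ and with Euler characteristic at least $\Chi(V_{\bov_1}(B_1))$. Since $B_1$ is already $\Chi$-maximal, each such surgery strictly reduces a combinatorial complexity of the pair $(B,\bov)$ without changing $\Chi$, so a descending induction on this complexity terminates at a $\tau$-taut carrier. The $\pi_1$-injectivity of leaves and maximality of $\Chi$ then follow from the definition of $\tau$-tautness.

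The main obstacle is executing the surgeries while simultaneously (a) keeping the peripheral boundary link $V_\bow(\tau)$ exactly fixed, and (b) preserving asphericity and the existence of a positive invariant weight extending $\bow$ on the new carrier. The peripheral hypothesis is what makes (a) tractable: since $\tau \embed \bdry M$, a collar of $\bdry M$ can be held pointwise fixed throughout, so all surgeries can be arranged in the interior of $M$ and neither $\tau$ nor $\bow$ is altered. Verifying (b) for each surgery move, together with ensuring the limit step in the compactness argument preserves asphericity and the boundary data, is the delicate technical work; designing a complexity function whose descent is strict after each non-$\chi$-improving surgery, so that the inductive process terminates, will be the most subtle part of the argument.
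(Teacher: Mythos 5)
Your overall outline (achieve the supremum, then improve the carrier to a $\tau$-taut one) points in the right direction, but it has a genuine gap at its central step: the claim that you can ``pass to a maximizing sequence carried by a single $B$.'' The fact that any \emph{two} laminations with the same boundary can be carried by a common branched surface after splitting (as in Lemma \ref{BoundingConvexLemma}) does not give a single aspherical branched surface carrying an infinite maximizing sequence; combining carriers pairwise produces branched surfaces of unbounded complexity, and there is no compact weight polytope in which to take a limit. This finiteness is exactly what the paper's proof supplies via normal surface theory: after fixing a triangulation, every maximal-$\Chi$ Seifert surface is isotoped to a least-area normal surface, and the finiteness of normal disk types yields a \emph{finite} collection of $\tau$-select branched surfaces $\{B_i\}$ fully carrying all such surfaces (Lemma \ref{NormalLemma}). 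A maximizing sequence of laminations is then approximated by rational weights, rescaled to surfaces, and a subsequence is carried by a single $B_i$, whose compact weight cell $\PC(B_i)$ permits the limit (Lemma \ref{TechLemma} and Theorem \ref{AchieveX}). Without some substitute for this finiteness your compactness argument does not close.

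The second half of your plan --- surgering a maximal-$\Chi$ carrier to remove disks of contact, compressions, monogons, and Reeb pieces, with termination controlled by an unspecified strictly decreasing complexity --- is precisely the part you flag as ``the most subtle,'' and it is where the paper takes a different route that avoids the problem. The select branched surfaces are built so as to already satisfy these conditions: disks of contact are eliminated by splitting along \emph{least-area} disks of contact (of which there are only finitely many per $\hat B$, by Claim 3 of Lemma \ref{NormalLemma}), asphericity and incompressibility of $\bdry_hN(B)$ are deduced from incompressibility and minimal area of the carried surface, and orientability is forced by an exchange argument (Figure \ref{Swap}) using maximality of $\Chi$. Tautness of the final carrier is then obtained not by further surgery but by Lemma \ref{TautLemma}: once one lamination with strictly positive weights on a sub-branched surface $T\subset B_i$ is maximal-$\Chi$, \emph{every} non-closed lamination carried by $T$ is maximal-$\Chi$, so $T$ is $\tau$-taut by definition, and $\pi_1$-injectivity of leaves follows from the cited theorem on laminations carried by branched surfaces satisfying conditions (i)--(v), not ``by definition of tautness.'' To repair your argument you would need either to import the normal-surface finiteness or to actually construct the complexity function and verify that each surgery preserves asphericity, the boundary data, and the existence of a positive invariant weight --- none of which is routine.
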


\begin{defn}
The non-zero invariant weight vectors on any train track $\tau$ form a cone $\C(\tau)$.   Projectivizing by taking a quotient where $\bow\in \C(\tau)$ is equivalent to $\lambda\bow$, we obtain the {\it weight cell}, $\PC(\tau)$.  If $V(\tau)$ is any train track neighborhood of $\tau$, we also write  $\PC(\tau)=\PC(V(\tau))$.  Usually we use $\PC(\tau)$ to denote a particular subspace of $\C(\tau)$, namely $\PC(\tau)=\{\bow\in \C(\tau):\sum_iw_i=1\}$, where $w_i$ are the entries of $\bow$.
\end{defn}

Some peripheral links bound Seifert laminations, some do not.   We show in Section \ref{Peripheral} that the set of bounding links carried by $\tau\embed \bdry M$ corresponds to a convex subcone $\CB(\tau)\subset \C(\tau)$.  Projectivizing, we obtain $\PCB(\tau)\subset \PC(\tau)$.
  
  We will prove more detailed  results implying:

\begin{thm}\label{SeifertThm} Suppose $M$ is a compact, oriented 3-manifold.  Suppose $\tau$ is an embedded oriented train track in $\bdry M$.  Then

\noindent (a) the function $X:\CB(\tau)\to \reals$ defined by $X(\bow)=X(\tau(\bow))$ is continuous;

\noindent (b) if $\tau(\bow)$ represents a lamination which bounds  a 2-dimensional oriented lamination then there is a $\tau$-taut branched surface $B$ and an invariant weight vector $\bov$, with $\bdry B(\bov)=V_\bow( \tau)$.  In particular $B(\bov)$ achieves a finite supremum in the definition of $X(V_\bow(\tau))$ and the leaves of $B(\bov)$ are $\pi_1$-injective.
\end{thm}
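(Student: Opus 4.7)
My plan is to prove part (b) first, since part (a) will follow from concavity together with (b). To prove (b), I would start with any Seifert lamination $V_{\bov_0}(B_0)$ bounding $V_\bow(\tau)$, which exists by hypothesis, and show that within the class of aspherical oriented branched surfaces $B \embed M$ carrying Seifert laminations with boundary equivalent to $V_\bow(\tau)$, the Euler characteristic $\Chi(B(\bov))$ is bounded above. For a peripheral link, this bound should follow from a linear inequality in terms of the entries of $\bow$: since $V(\tau)$ lies in $\bdry M$, the boundary weights $\bow$ control how a Seifert lamination meets a fixed collar, and asphericity of $B$ together with compactness of $M$ prevents Euler characteristic from being raised by attaching spheres or disks of contact. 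This gives $X(\bow) < \infty$.

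Next I would show the supremum is achieved by a $\tau$-taut branched surface. The strategy is to take a sequence $\bov_n$ of invariant weight vectors on branched surfaces $B_n$ with $\Chi(B_n(\bov_n)) \to X(\bow)$ and use a Kneser--Haken style finiteness argument: by repeated splitting, disk-of-contact removal, and compression of $\pi_1$-inessential curves, each $B_n$ can be modified within a controlled family, and up to the equivalences defining lamination links, one can extract a single branched surface $B$ carrying all the relevant weight vectors (at least projectively), with the desired boundary. The supremum is then realized on $B$ as a maximum of a continuous linear functional on the compact set $\PC(B) \cap \{\bdry = V_\bow(\tau)\}$. Finally, I would invoke whatever modification moves are built into Definition \ref{TautBranched} (splitting, removal of monogons, removal of sphere-carriers, etc.) to convert $B$ into a $\tau$-taut branched surface $B'$ still carrying a Seifert lamination with the right peripheral boundary and with $\Chi$ unchanged; the $\pi_1$-injectivity of leaves and maximality of $\Chi$ then follow directly from the design of $\tau$-tautness. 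The hard part here will be matching all the pieces of the taut definition up with the finite modification process on $B$ while preserving the boundary weights $\bow$.

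For part (a), I would first observe that $X$ is positively homogeneous of degree one in $\bow$, since scaling the weight vector scales $\Chi$ linearly. Then I would prove concavity: given $\bow_1, \bow_2 \in \CB(\tau)$ realized by Seifert laminations $B_1(\bov_1), B_2(\bov_2)$, the union branched surface $B_1 \cup B_2$ (after any standard modification to make the union a legitimate branched surface in $M$) carries $\alpha\bov_1 + \beta\bov_2$ for $\alpha, \beta \ge 0$, and this has boundary $\alpha V_{\bow_1}(\tau) + \beta V_{\bow_2}(\tau)$ with Euler characteristic $\alpha \Chi(B_1(\bov_1)) + \beta \Chi(B_2(\bov_2))$. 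Taking suprema yields concavity of $X$ on $\CB(\tau)$. A concave positively homogeneous function on a convex cone is automatically continuous on the relative interior, and near the relative boundary continuity follows from upper semi-continuity (which is built into the definition as a supremum of linear functions on the finite-dimensional parameter cones for the finitely many relevant $B$) combined with lower semi-continuity from concavity. The main obstacle throughout will be the technical bookkeeping in forming $B_1 \cup B_2$ as a branched surface and in the finiteness/limiting argument for (b); the rest of the structure is forced by asphericity and the design of the $\tau$-taut condition.
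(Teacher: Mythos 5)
Your overall strategy --- normal-surface (Kneser--Haken) finiteness to realize the supremum on one of finitely many branched surfaces, then homogeneity plus concavity for continuity --- is the same skeleton the paper assembles from Lemmas \ref{NormalLemma} and \ref{TechLemma}, Theorem \ref{AchieveX}, Theorem \ref{TautThm}, and the proof of Theorem \ref{XThm}. However, two of your steps have genuine gaps as stated.

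First, concavity. You form the pinched union $B_1\cup B_2$ and note that it carries $\alpha\bov_1+\beta\bov_2$ with additive $\Chi$. But the supremum defining $X$ ranges only over \emph{aspherical} branched surfaces, and the pinched union of two aspherical oriented branched surfaces can easily carry a sphere (two disk sectors, one from each, glued along a common boundary after pinching). So $(B_1\cup B_2)(\alpha\bov_1+\beta\bov_2)$ is not an admissible competitor and ``taking suprema'' does not yield concavity. The paper uses the union construction only to prove that $\CB(\tau)$ is convex (Lemma \ref{BoundingConvexLemma}); for superadditivity of $X$ it instead approximates by rational weighted surfaces (Corollary \ref{ApproxCor}) and performs \emph{oriented} cut-and-paste on embedded incompressible surfaces after removing trivial intersection curves (Lemma \ref{SumLemma}), which by construction produces no spheres and allows closed components with $\Chi\le 0$ to be discarded. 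Some version of that surface-level argument is needed.

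Second, continuity at the relative boundary of $\CB(\tau)$. You claim upper semi-continuity is ``built into the definition as a supremum of linear functions.'' A supremum of continuous functions is \emph{lower} semi-continuous, not upper; and concavity also only gives you (radial) lower semi-continuity at boundary points. Both of your inputs are on the same side, so the argument does not close. The mechanism that works, and that the paper uses, is finiteness combined with Lemma \ref{TautLemma}: every lamination carried by one of the finitely many taut branched surfaces $T_i$ is maximal-$\Chi$, so $\Chi$ descends to a well-defined \emph{linear} function on the polyhedral cone $\bdry(\C(T_i))\subset\CB(\tau)$, and $X$ is then finite piecewise linear --- hence continuous --- because finitely many such cones cover $\CB(\tau)$. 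Two smaller points: your a priori bound on $\Chi$ ``by a linear inequality in $\bow$'' is unjustified before the finiteness machinery is in place (in the paper finiteness of the supremum is a consequence of Lemma \ref{TechLemma}); and Lemma \ref{NormalLemma} requires $M$ irreducible, so the reducible case must be treated separately via the prime decomposition as in Proposition \ref{SumProp}, which your outline omits.
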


The method of proof of the above theorem shows that there are some similarities with William Thurston's norm on homology, \cite{WPT:Norm}.   We will prove that the function $X$ is piecewise linear, and we will show that there are finitely many $\tau$-taut branched surfaces $B$ such that $\bdry B$ is a sub-train track of $\tau$ and for every $\tau(\bow)$, there is a branched surface $B$ in the finite collection and an invariant weight vector $\bov$ for $B$ such that $\bdry B(\bov)=\tau(\bow)$.

The following theorem clarifies the analogy between the function $X:\CB(\tau)\to \reals$ and Thurston's norm on homology.   

\begin{thm}\label{XThm} Suppose $M$ is a compact orientable 3-manifold.  Suppose $\tau\embed \bdry M$ is an embedded oriented train track in $\bdry M$.
\noindent The function $X:\CB(\tau)\to \reals$ is linear on rays, finite piecewise linear, and concave.
\hip
\noindent There exists a finite collection of $\tau$-taut branched surfaces such that for every peripheral link $\tau(\bow)$ which bounds a Seifert lamination there is a branched surface $B$ of the collection, and $\bov\in C(B)$ such that $B(\bov)$ is a taut Seifert lamination for $\tau(\bow)$.
\end{thm}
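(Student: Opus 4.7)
The plan is to bootstrap on Theorem~\ref{SeifertThm}(b), which guarantees that every bounding peripheral link $\tau(\bow)$ admits a taut Seifert lamination carried by some $\tau$-taut branched surface. The strategy is to first establish the finiteness statement --- that there is a finite collection $\mathcal B=\{B_1,\ldots,B_n\}$ of $\tau$-taut branched surfaces, each with $\bdry B_i$ a sub-train track of $\tau$, such that every taut Seifert lamination for a bounding peripheral link is carried by some $B_i$ --- and then deduce the analytic properties of $X$ from this finiteness. Linearity on rays is then immediate, since if $B_i(\bov)$ is a taut Seifert lamination with boundary $\tau(\bow)$, then $B_i(\lambda\bov)$ is a taut Seifert lamination with boundary $\tau(\lambda\bow)$ for any $\lambda>0$, and $\Chi$ is linear in the weight vector.

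The finiteness of $\mathcal B$ is the heart of the argument and is the step I expect to be the main obstacle. I would proceed by a Haken-finiteness style argument. First, I would show that the condition of being $\tau$-taut places an a priori upper bound on the combinatorial complexity of $B$ (for instance, on its number of sectors or on $-\Chi(B)$), essentially because any putative $\tau$-taut branched surface of larger complexity could be split, compressed, or otherwise reduced to one of bounded complexity without losing $\tau$-tautness or the taut Seifert laminations carried. Second, I would argue that among branched surfaces embedded in $M$ (up to ambient isotopy) with $\bdry B$ a sub-train track of $\tau$ and complexity below the bound, only finitely many carry $\tau$-taut structures. Together these yield the desired finite collection.

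With $\mathcal B$ in hand, the remaining analytic properties follow from polyhedral geometry together with one cut-and-paste step. For each $i$, the invariant weight cone $C(B_i)\subset\reals^{N_i}$ is a rational polyhedral cone cut out by the branch equations, the boundary map $\bdry_i\from C(B_i)\to\C(\tau)$ is linear, and $\bov\mapsto\Chi(B_i(\bov))$ is linear. Setting $X_i(\bow)=\max\{\Chi(\bov)\suchthat \bov\in C(B_i),\ \bdry_i\bov=\bow\}$ yields a concave, piecewise linear function on the polyhedral subcone $\bdry_i(C(B_i))\subset\CB(\tau)$ by parametric linear programming, and
\[
X(\bow)=\max_{i\,:\,\bow\in\bdry_i(C(B_i))}X_i(\bow)
\]
is then a maximum of finitely many piecewise linear functions on their polyhedral domains, hence finite piecewise linear. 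For concavity of $X$, I would verify super-additivity $X(\bow_1+\bow_2)\ge X(\bow_1)+X(\bow_2)$: given taut Seifert laminations $B_{i_1}(\bov_1)$ and $B_{i_2}(\bov_2)$ with boundaries $\tau(\bow_1)$ and $\tau(\bow_2)$, the compatibly oriented union is a 2-dimensional lamination in $M$ with boundary $\tau(\bow_1+\bow_2)$, and after compressing and discarding any sphere components created by the superimposition, this lamination is carried by an aspherical branched surface with Euler characteristic at least $\Chi(B_{i_1}(\bov_1))+\Chi(B_{i_2}(\bov_2))=X(\bow_1)+X(\bow_2)$. Super-additivity together with positive homogeneity then gives concavity, completing the argument.
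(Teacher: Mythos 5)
Your overall architecture (finiteness of a carrying collection first, then polyhedral geometry plus a cut-and-paste superadditivity argument) matches the paper's in outline, but the step you yourself identify as the heart of the matter --- finiteness --- is where the proposal has a genuine gap. Your plan is to bound the combinatorial complexity of a $\tau$-taut branched surface and then invoke the claim that only finitely many branched surfaces of bounded complexity, with $\bdry B$ a sub-train track of $\tau$, exist in $M$ up to ambient isotopy. That second claim is false: already for surfaces, a 3-manifold can contain infinitely many pairwise non-isotopic embedded surfaces of a fixed topological type with a fixed boundary (e.g.\ infinitely many non-isotopic minimal genus Seifert surfaces for a single knot), and the same is true for branched surfaces with a bounded number of sectors. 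Finiteness in the paper is not a consequence of bounded complexity; it comes from normal surface theory relative to a fixed triangulation (Lemma \ref{NormalLemma}): every maximal-$\Chi$ Seifert surface is normalized and least-area, the branched surface $\hat B$ is assembled from the finitely many normal disk types it uses, and $\hat B$ is then split along finitely many least-area disks of contact to produce the finite family of $\tau$-select branched surfaces; the taut $T_i$ are their sub-branched surfaces (Theorem \ref{TautThm}), with Lemma \ref{TechLemma} needed to see that the supremum defining $X$ is actually attained on one of them. None of this machinery is replaceable by a complexity bound alone. You also do not address reducible $M$: the theorem is stated for a general compact orientable 3-manifold, and the paper routes that case through the prime decomposition (Proposition \ref{SumProp}).

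Two secondary points. First, your superadditivity step superimposes two Seifert \emph{laminations} and then "compresses and discards sphere components"; this operation is not well defined for laminations, and the pinched-together branched surface carrying the sum may carry spheres even when each summand's branched surface does not, which is exactly the asphericity constraint in the definition of $X$. The paper avoids this by proving superadditivity only for integral weighted \emph{surfaces} (Lemma \ref{SumLemma}, where discarding closed components and controlling spheres is legitimate) and then passing to arbitrary points of $\CB(\tau)$ by rational approximation and the continuity of $X$ (Corollary \ref{ApproxCor}). Second, your parametric-LP description of $X_i$ is more complicated than necessary and masks a stronger fact the paper proves (Lemma \ref{TautLemma}): every lamination carried by a taut $T_i$ is already maximal-$\Chi$, so $X$ is constant on fibers of $\bdry:\C(T_i)\to\CB(\tau)$ and hence genuinely \emph{linear} on each image $\im(\bdry)$, not merely concave piecewise linear there; piecewise linearity of $X$ then follows directly from the finite covering of $\CB(\tau)$ by these images. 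Granting the finiteness statement, your versions of the analytic steps could be repaired, but as written the finiteness argument does not go through.
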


We will show that, in a sense, Theorem \ref{SeifertThm} also applies to non-peripheral links.   Given a lamination link represented as $V_\bow(\tau)$ which bounds a Seifert lamination, we will show that after possibly changing the representative $V_\bow(\tau)$, there is a submanifold $\hat M$ of $M$ such that  $V_\bow(\tau)$ is peripheral in $\hat M$, and we also show that a maximal-$\Chi$ Seifert lamination in $\hat M$ is maximal-$\Chi$ in $M$.   We give some more details.   Notice first that a lamination link $V_\bow(\tau)$ yields a measured singular foliation of a ``surface with inward cusps" or {\it scalloped surface} $S=V(\tau)$, where $S$ is the same as $V(\tau)$ but ignoring the fibering by intervals.  The ``cusps" of $S$ are the same as the cusps $C$ of $V(\tau)$, so the scalloped surface can be regarded as a pair $(S,C)$.  In our context, $S$ is also equipped with an orientation, and the curves of $\bdry S\setminus C$ are oriented.   Now a lamination link $V_\bow(\tau)\embed M$ can be viewed as a {\it measured oriented singular foliation $(\F,\mu)$} of a scalloped surface $S\embed M$ with no closed components, where the singularities of the foliation are the cusp points on $\bdry S$.    We are only interested in scalloped surfaces which support measured oriented foliations.  This means that $S$ has ``geometric Euler characteristic" 0.  Also, for example, if $S$ is an annulus with no cusps on its boundary, the two boundary components must be oriented such that one boundary component can be isotoped to the other preserving orientations.  

We have introduced scalloped surfaces not only because they are useful here, but also because we wish to use them in a later paper for defining a space of framed lamination links.   

\begin{defns}
Suppose $S\embed M$ is an oriented 2-dimensional scalloped surface embedded in an oriented 3-manifold $M$.   A {\it complementary digon} for $S$ is an embedded disk $D$ such that $D\cap S=\bdry D$ and $\bdry  D$ contains exactly two cusp points of $S$.   If $S\embed M$ is a scalloped surface, we say $S$ is {\it adigonal} if there are no complementary digons for $S$, except complementary digons $D$ with the property that $D\cup S$ contains a torus component containing $D$.    A {\it complementary annulus} for $S$ is an embedded annulus $A$ such that $A\cap S=\bdry A$ and the two curves of $\bdry S\cap A$ are isotopic in $A$ as oriented curves.   We say  $S$ is {\it anannular} if there are no complementary annuli except complementary annuli with the property that $A\cup S$ contains a torus containing $A$.   If $V(\tau)\embed M$ is a train track neighborhood, it is also a scalloped surface, so all of the above definitions (complementary digon, adigonal, complementary annulus, anannular) apply to $V(\tau)$ as well.  

{\it Preleaves and leaves} of a foliation $(\F,\mu)$ of a scalloped surface $S$ are defined just as for prelaminations $V_\bow(\tau)$.   A preleaf of the singular foliation $(\F,\mu)$ of $S$ is called a {\it separatrix} if its interior is contained int the interior of $S$ and has at has at least one end at a cusp point on $\bdry V(\tau)$.  Just as a prelamination $V_\bow(\tau)$ is irreducible if there is no separatrix preleaf with each end at a cusp, so also a measured foliation $(\F,\mu)$ of a scalloped surface $S$ is {\it irreducible} if it has no separatrix preleaf with each end at a cusp and in case $S$ contains a torus component, $\F$ has no closed leaf in the torus.
\end{defns}

The definition of anannular is designed to ensure that if $S\embed M$ is a single annulus supporting an oriented measured foliation, which must be a product foliation, then $S$ is anannular.  On the other hand, if $S$ consists of two annuli, and there is a complementary annulus $A$, then $S$ is  not anannular.   In this case we would like to combine the two components of $S$. 

We note that if a foliation $(\F,\mu)$ of a scalloped surface $S$ which is not a torus is {\it reducible}, i.e. if there is a separatrix with each end at a cusp of $S$, then cutting on the separatrix yields a foliation of a simpler scalloped surface (with fewer cusps).

\begin{proposition}  \label{AdigonalProp} 

(a) Given a scalloped surface with an irreducible, adigonal, and anannular measured oriented foliation, $(S,\F,\mu)$ there is a prelamination $V_\bow(\tau)$ whose underlying scalloped surface with measured oriented foliation is $(S,\F,\mu)$.

(b) If $V_\bou(\rho)\embed M$ represents a lamination link in $\intr(M)$, then $V_\bou(\rho)$ can be replaced by an irreducible, adigonal, anannular representative $V_\bow(\tau)$.   The underlying scalloped surface $S$ for $V_\bow(\tau)$   is unique up to isotopy in $M$ and is adigonal and anannular.   The measured foliation $(\F,\mu)=V_\bow(\tau)$ of $S$ is irreducible, adigonal, and anannular.   

\noindent   \end {proposition}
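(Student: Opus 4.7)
For part (a), my plan is to construct the train-track fibering $\pi: S \to \tau$ directly from $(S, \F, \mu)$. Each cusp $c \in C \subset \bdry S$ is incident to a unique preleaf separatrix $\sigma_c$ heading into $\intr(S)$, and irreducibility forces $\sigma_c$ to not terminate at another cusp. Cut $S$ along a short initial segment of each $\sigma_c$; the resulting pieces are rectangular regions foliated as products, each bounded by two transverse arcs (from separatrices) and two leaf-parallel arcs on $\bdry S \setminus C$. Collapsing each rectangle onto a transverse core arc produces the branches of $\tau$, the cusps of $S$ become the switches, and the $\mu$-measure of each fiber yields the weight on the corresponding branch. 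The adigonal and anannular hypotheses on $(S, \F, \mu)$ exclude bigon and annulus rectangles that would force a degenerate train-track structure, and invariance of $\mu$ immediately gives the switch equations at each cusp.

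For part (b), existence follows from a sequence of reductions starting from an arbitrary representative $V_\bou(\rho) \embed M$. First, split along each compact separatrix between cusps, replacing $V_\bou(\rho)$ with an equivalent but irreducible prelamination. Second, for each complementary digon $D$ for $V(\rho)$ not enclosing a torus component, isotope $V(\rho)$ across $D$ to merge the two adjacent cusps, reducing the cusp count by two. Third, for each complementary annulus $A$ not enclosing a torus component, perform the analogous isotopy across $A$ to combine two adjacent boundary components. Each operation strictly decreases a combinatorial complexity (the number of compact separatrices, the cusp count, or the number of components), so the process terminates in an irreducible, adigonal, anannular $V_\bow(\tau)$, whose underlying $(S, \F, \mu)$ then satisfies all three properties.

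For uniqueness of the underlying $S$ up to isotopy, my plan is to take two reduced representatives $V_{\bow_i}(\tau_i)$, $i=1,2$, of the same link and use the definition of equivalence to find a common splitting $V_{\bow_0}(\tau_0) \subset M$. Applying the reduction procedure to $V_{\bow_0}(\tau_0)$ must then yield a representative whose underlying scalloped surface is isotopic to both $S_1$ and $S_2$. The main obstacle will be the confluence argument: different orderings of the digon- and annulus-elimination isotopies must produce isotopic end results. I would establish this by showing that distinct complementary digons and annuli in $M \setminus V(\tau)$ may, after isotopy, be arranged to meet only in product-like standard position, so that the elimination isotopies are effectively independent and commute up to isotopy in $M$. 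This is in the spirit of standard normal-surface uniqueness arguments, and is the technical heart of the proposition.
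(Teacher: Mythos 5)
Your part (a) construction has a genuine gap. Cutting $S$ along a \emph{short} initial segment of each separatrix $\sigma_c$ does not decompose $S$ into product-foliated rectangles: by Lemma \ref{ReducingLemma}(b), an irreducible foliation that is not a product annulus or a torus foliation has \emph{dense} union of separatrices, and splitting on a short arc with one end at a cusp returns a scalloped surface diffeomorphic to the original $S$ (e.g.\ a holed torus with an irrational foliation stays a holed torus). To obtain a chart/rectangle structure one must first choose a ``vertical'' foliation $\V$ of $S$ transverse to $\F$ and then regular-split on \emph{sufficiently long} arcs of the separatrices --- long enough that every leaf of $\V$ is cut into compact segments; those segments become the fibers of $V(\tau)$ and density of the separatrices is what guarantees finitely long arcs suffice. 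This is the paper's argument. You also omit the components with no cusps at all, where there are no separatrices to cut along: after pinching the allowed digons and annuli one may have torus components (foliated by circles or by irrational-slope leaves) and product-foliated annuli, and each of these needs its own splitting (on a closed leaf, or on a long arc of a dense leaf) to acquire a train-track structure.

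Your part (b) existence argument (split reducing arcs, then pinch a maximal collection of complementary digons and annuli not creating closed components, with a terminating complexity) matches the paper. For uniqueness, however, the confluence step you defer is not quite the right statement and is where the real content lies. The paper does not argue that disjoint eliminations commute; it runs a $1$-parameter family $P_t$ of prelaminations realizing the equivalence, normalizes it so that reducing arcs are split and persistent digons/annuli are pinched throughout maximal time intervals, and then invokes Lemma \ref{DigonPinch}: two complementary digons (or annuli) with the \emph{same boundary} yield isotopic surfaces $S\cup D$ and $S\cup E$, and on a torus component with several excised digons (resp.\ annuli) pinching any one of them gives the same scalloped surface up to isotopy. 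That last case is forced on you because one must leave exactly one digon unpinched to avoid a closed component, and the choice of which one is not canonical; your ``standard position'' commutativity heuristic does not address it. Also note that your plan to apply the reduction to a common splitting $V_{\bow_0}(\tau_0)$ and conclude it recovers both $S_1$ and $S_2$ already presupposes that the reduction is well defined up to isotopy, which is the thing being proved; the $1$-parameter family formulation avoids this circularity.
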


 There is a submanifold $\hat M$ of $M$, such that $S\embed \bdry \hat M$, and therefore the link represented by $V_\bow(\tau)$ is peripheral in $\hat M$.  The submanifold $\hat M$ is obtained as follows: We let $N$ be a regular neighborhood of $S$ in $\intr(M)$.  The neighborhood $N$ must be a product of the form $S\times I$, since $M$ and $N$ are orientable.  We let $\hat M$ be the closure of $M\setminus N$.    Then we isotope $S$ so $S\subset \bdry N=\bdry \hat M$ and so that the orientation induced on $S$ by the orientation on $\hat M$ is the same as the orientation on $S$.  
  
 \begin{defn} If $V_\bow(\tau)$ is irreducible, anannular, and adigonal, then we say $\hat M$ is the {\it track exterior} for the link $V_\bow(\tau)$.
 \end{defn}

Since the underlying scalloped surface $S$ for a lamination link represented as an irreducible, adigonal, anannular $V_\bow(\tau)$ is unique up to isotopy, the track exterior is well-defined.

\begin{thm} \label{Extend}  Suppose $M$ is a compact, oriented 3-manifold and $V_\bow(\tau)$ is an irreducible, adigonal, anannular representative of a lamination link embedded in $M$.   Then there is a maximal-$\Chi$ Seifert lamination for $V_\bow(\tau)$.   Further, there is a maximal-$\Chi$ Seifert lamination $B(\bov)$ which is also a taut Seifert lamination (carried by a $\tau$-taut branched surface $B$) when  $V_\bow(\tau)$ is viewed as a peripheral link in the track exterior $\hat M$.  In particular, leaves of $B(\bov)$ are $\pi_1$-injective into $\hat M$.  
\end{thm}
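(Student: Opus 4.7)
The plan is to reduce Theorem \ref{Extend} to the peripheral case already established in Theorem \ref{SeifertThm}. Because $V_\bow(\tau)$ is irreducible, adigonal, and anannular, the track exterior $\hat M$ is well-defined by the construction given just before the theorem: if $N = S \times I$ is a regular product neighborhood in $\intr(M)$ of the underlying scalloped surface $S$, then $\hat M = \cl(M \setminus N)$, and using the given orientation on $S$ and on $M$ we embed $S$ into $\bdry \hat M$ compatibly with orientations. Under this setup, $V_\bow(\tau)$ is peripheral in $\hat M$, carried by the image of $\tau$ on $S \subset \bdry \hat M$.

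The first main step is to establish a boundary-preserving, $\Chi$-preserving correspondence between Seifert laminations for $V_\bow(\tau)$ in $M$ and Seifert laminations for $V_\bow(\tau)$, viewed as a peripheral link, in $\hat M$. Given any Seifert lamination $B'(\bov') \embed M$ bounding $V_\bow(\tau)$, a collaring argument exploiting the product structure of $N$ allows one to isotope $B'$ in $M$ so that $B' \cap N$ consists only of a product collar of $V_\bow(\tau)$ inside $V_\bow(\tau) \times I \subset S \times I = N$. The restriction $B'(\bov') \cap \hat M$ is then a Seifert lamination in $\hat M$ bounding the peripheral link $V_\bow(\tau) \embed \bdry \hat M$. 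Because only an ambient isotopy (and a trimming of a trivial product collar) has been performed, both the Euler characteristic and the asphericity of the carrying branched surface are preserved. Conversely, inclusion $\hat M \embed M$ followed by a small collar push of $S$ from $\bdry \hat M$ back into $\intr(M)$ sends any Seifert lamination in $\hat M$ to a Seifert lamination in $M$ with the same $\Chi$. Consequently the supremum defining $X(V_\bow(\tau))$ in $M$ coincides with the analogous supremum computed in $\hat M$.

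I would then apply Theorem \ref{SeifertThm}(b) to the peripheral link $V_\bow(\tau) \embed \bdry \hat M$: this produces a $\tau$-taut branched surface $B \embed \hat M$ and an invariant weight vector $\bov$ such that $B(\bov)$ is a taut Seifert lamination with $\bdry B(\bov) = V_\bow(\tau)$, realizing a finite supremum for $X$ in $\hat M$. By the correspondence of the previous paragraph, this same $B(\bov)$, viewed as a Seifert lamination in $M$, realizes the supremum in $M$ and is therefore a maximal-$\Chi$ Seifert lamination for the original link. The $\pi_1$-injectivity of leaves of $B(\bov)$ into $\hat M$ is immediate from the same application of Theorem \ref{SeifertThm}(b).

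The main technical obstacle is the collaring step: ensuring that an arbitrary Seifert lamination in $M$ can be put into standard form near $V_\bow(\tau)$ so that it meets $N$ in a product collar, while the cut object still represents the same equivalence class of Seifert laminations. In contrast to the smooth-surface case this requires careful use of the definitions of splitting, pinching in $M$, and equivalence of prelaminations (Definitions \ref{SplittingDef}, \ref{LeafDef}, \ref{KnottedDef1}), together with the hypothesis that $V_\bow(\tau)$ is adigonal and anannular, so that the product structure on $N$ is well-defined up to isotopy and the modification does not create trivial leaves or otherwise alter the equivalence class. Once this step is handled, the remainder of the argument is essentially a direct transport of the peripheral result in $\hat M$ back to $M$.
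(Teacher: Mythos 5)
There is a genuine gap, and it sits exactly where the real work of the theorem lies. Your reduction treats the supremum defining $X(V_\bow(\tau))$ as ranging over Seifert laminations whose boundary is the \emph{fixed embedded representative} $V_\bow(\tau)$; for those, a collar/cut correspondence between $M$ and $\hat M$ is indeed routine. But by Definition \ref{KnottedDef1} the supremum ranges over Seifert laminations $B(\bov)$ with $\bdry B(\bov)=V_{\bow'}(\tau')$ for \emph{any} representative $V_{\bow'}(\tau')$ equivalent to $V_\bow(\tau)$. Such a representative may be reducible, may admit complementary digons or annuli, and its underlying scalloped surface need not be isotopic to $S$; consequently its Seifert laminations do not live near $N=S\times I$ and cannot be put into a product collar with respect to $N$ by any isotopy. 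This is precisely the "added subtlety" the paper flags at the start of Section \ref{FromTo}: a maximal-$\Chi$ Seifert lamination for the link may not be representable with boundary on the chosen $\tau$ at all. Your proposal never compares the competing exteriors $\hat M_0$ (built from an arbitrary representative $V_\bou(\rho)$) with $\hat M$, so it does not rule out that some other representative bounds a Seifert lamination of strictly larger $\Chi$.

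The paper's proof is devoted almost entirely to closing this gap. It takes an arbitrary representative $P_0=V_\bou(\rho)$ with a purportedly better Seifert lamination, connects $P_0$ to the canonical $P_1=V_\bow(\tau)$ by a one-parameter family $P_t$ with isolated events (splitting on reducing arcs, pinching or introducing complementary digons, pinching complementary annuli), and at each event modifies a rational weighted-surface approximation $\delta F_r$ of the current taut Seifert lamination without decreasing $\Chi$: innermost-disk surgery against the digon disk $E$ using incompressibility of the taut surface, and oriented cut-and-paste in the complementary annulus $A$ with a case analysis on the transverse orientations of the intersection curves relative to the adjacent annuli $A_1,A_2$. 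Continuity of $\hat X_t$ and Proposition \ref{SumProp} then upgrade the approximations back to taut laminations, yielding $\hat X_0(P_0)\le \hat X_1(P_1)$. By contrast, the step you single out as the main technical obstacle (normalizing a lamination bounding the fixed representative near $N$) is the easy direction. To repair your argument you would need to supply this event-by-event comparison across non-isotopic representatives, or an equivalent mechanism.
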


We can also make a statement about the continuity of the function $X$ for non-peripheral links.   

\begin{defn}  Suppose $V(\tau)\embed M$ is a framed adigonal, anannular train track.    We let $\CIB(V(\tau))$ denote the subspace of $\CB(V(\tau))$ of invariant weight vectors $\bow$, with all entries positive, on $\tau$ such that $V_\bow(\tau)$ is irreducible and bounds a Seifert lamination.  $$\CIB(V(\tau))=\{\bow\in \CB((\tau)):V_\bow(\tau) \text{ is irreducible and every entry } w_i>0\}$$
\end{defn}

\begin{thm}\label{Continuity}  Suppose $V(\tau)\embed M$ is a framed adigonal, anannular train track in an oriented compact 3-manifold.   Then the function $X:\CIB(V(\tau))\to \reals$  defined by $X(\bow)=X(V_\bow(\tau))$ is continuous. 
\end{thm}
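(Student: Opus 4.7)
The plan is to reduce the theorem to the peripheral case already handled by Theorem~\ref{XThm}, using the track exterior construction from Theorem~\ref{Extend}. The crucial observation is that $\CIB(V(\tau))$ fixes the framed train track $V(\tau)$; consequently the underlying scalloped surface $S=V(\tau)$ (with the fiber structure forgotten) is fixed, independent of $\bow$. By Proposition~\ref{AdigonalProp} and the hypothesis that $V(\tau)$ is adigonal and anannular, $S$ is the unique representative scalloped surface, and the track exterior $\hat M$ built from $S$ is therefore a fixed submanifold of $M$ that depends only on $V(\tau)$, not on the weight vector $\bow$.

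With $\hat M$ fixed, for every $\bow\in \CIB(V(\tau))$ the lamination link $V_\bow(\tau)$ can be isotoped into $\bdry \hat M$, becoming a peripheral link in $\hat M$ supported on the same oriented train track $\tau\embed \bdry \hat M$. Writing $X_M$ and $X_{\hat M}$ for the maximum-$\Chi$ functions computed in $M$ and $\hat M$ respectively, Theorem~\ref{Extend} (together with the discussion preceding it, which asserts that a maximal-$\Chi$ Seifert lamination in $\hat M$ remains maximal-$\Chi$ in $M$) gives
\[
X_M(V_\bow(\tau))=X_{\hat M}(V_\bow(\tau)) \quad \text{for every } \bow\in \CIB(V(\tau)),
\]
and moreover $\CIB(V(\tau))\subseteq \CB_{\hat M}(\tau)$, since any Seifert lamination in $M$ provided by the hypothesis $\bow\in \CIB(V(\tau))$ can by Theorem~\ref{Extend} be chosen in $\hat M$.

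Now apply Theorem~\ref{XThm} to the peripheral train track $\tau\embed \bdry \hat M$: the function $X_{\hat M}:\CB_{\hat M}(\tau)\to \reals$ is finite, piecewise linear, and in particular continuous. Restricting to the subset $\CIB(V(\tau))\subseteq \CB_{\hat M}(\tau)$ yields continuity of $X=X_M|_{\CIB(V(\tau))}$, which is what we want.

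The principal technical obstacle is verifying that the reduction to $\hat M$ is indeed uniform in $\bow$. Concretely, one must check that the track exterior, as constructed by removing a product neighborhood $S\times I$ of the underlying scalloped surface $S$ and then isotoping $S$ into its boundary, yields a submanifold whose isotopy class depends only on $V(\tau)$; this relies on the uniqueness clause of Proposition~\ref{AdigonalProp} (guaranteed by adigonality and anannularity), together with the fact that irreducibility of $V_\bow(\tau)$ is a condition on the weight vector and framed train track, not on the ambient manifold. Once this is in place, the argument is essentially a direct application of continuity in the peripheral setting.
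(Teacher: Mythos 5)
Your proposal is correct and follows essentially the same route as the paper: fix the track exterior $\hat M$ once and for all (using that $\CIB(V(\tau))$ fixes the framed train track, hence the underlying scalloped surface), regard every $V_\bow(\tau)$ with $\bow\in\CIB(V(\tau))$ as peripheral in that single $\hat M$ via Theorem \ref{Extend}, and then invoke the peripheral piecewise-linearity/continuity results (the paper cites Proposition \ref{SumProp}, you cite Theorem \ref{XThm}, which rests on the same finite family of $\tau$-taut branched surfaces). Your explicit flagging of the uniformity of $\hat M$ in $\bow$ is exactly the point the paper's proof relies on.
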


We hope to use this last theorem to describe continuity properties of $X$ on a space of links.

The following theorem confirms a natural presumption about trivial leaves in a lamination link:

 \begin{thm}\label{Trivial}  If a lamination link $V_\bow(\tau)$ (peripheral or not) in a compact 3-manifold bounds a Seifert lamination and contains trivial leaves, then a taut  Seifert lamination $V_\bov(B)$ for $V_\bow(\tau)$ has  the property that  every trivial leaf of $V_\bow(\tau)$ bounds a disk in a leaf of the Seifert lamination $V_\bov(B)$.
\end{thm}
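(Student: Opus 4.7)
The plan is to exploit the $\pi_1$-injectivity of leaves of a taut Seifert lamination, provided by Theorems \ref{SeifertThm} and \ref{Extend}, together with the classical surface-topology fact that a simple closed null-homotopic boundary curve of an orientable surface forces that surface to contain a disk with the given boundary.

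First, I would fix a taut Seifert lamination $V_\bov(B)$ for $V_\bow(\tau)$ supplied by Theorem \ref{SeifertThm} (peripheral case) or Theorem \ref{Extend} (general case). Let $\gamma$ be a trivial leaf of $V_\bow(\tau)$, so by definition $\gamma$ bounds an embedded disk $D\subset M$ whose interior is disjoint from $V_\bow(\tau)$. Since $\gamma\subset V_\bow(\tau)=\bdry V_\bov(B)$, there is a leaf $L$ of $V_\bov(B)$ with $\gamma$ appearing as a boundary circle. I would then verify that $\gamma$ is null-homotopic in the ambient 3-manifold relative to which leaves of $V_\bov(B)$ are $\pi_1$-injective: this is $M$ itself in the peripheral case, where $D$ does the job directly, and the track exterior $\hat M$ in the general case. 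Finally, $\pi_1$-injectivity of $L$ into this ambient manifold implies that $\gamma$ is null-homotopic in $L$; since $L$ is orientable and $\gamma$ is an embedded simple closed boundary curve of $L$, standard surface theory (a null-homotopic boundary component of a compact orientable surface forces the surface to be a disk) yields a disk $\Delta\subset L$ with $\bdry \Delta=\gamma$, which is the disk in a leaf required by the statement.

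The main technical obstacle is the non-peripheral case of the null-homotopy claim, because the disk $D$ lives in $M$ rather than in $\hat M$. Here I would use the product structure $N\cong S\times I$ of the regular neighborhood of the scalloped surface $S$ underlying $V(\tau)$, and isotope $\gamma$ to the appropriate component of $\bdry \hat M\supset \bdry N$; the correct side is the one from which leaves of $V_\bov(B)$ emerge at $\gamma$, which matches the side from which a suitable bounding disk can be taken. After putting $D$ in general position with respect to $\bdry N$, the intersection $D\cap \bdry N$ is a disjoint union of circles in $\intr(D)$; an innermost such circle bounds a subdisk of $D$ contained in one of the components of $N\setminus S\cong S\times(0,1]$, which deformation retracts to $S\times\{1\}$, so the circle is null-homotopic on $\bdry N$ and can be eliminated by a standard innermost-disk surgery. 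Iterating removes all intersections and yields a disk $D'\subset\hat M$ with $\bdry D'$ the push-off of $\gamma$ onto $\bdry\hat M$. Once this cleanup is in place, the rest of the argument is an immediate application of $\pi_1$-injectivity and classical surface topology; the isotopy of $D$ into $\hat M$ and the choice of side compatible with the leaf $L$ constitute the technical heart of the proof.
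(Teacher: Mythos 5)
Your proposal is correct and follows essentially the same route as the paper: $\pi_1$-injectivity of leaves of a taut Seifert lamination (inherited from the $\tau$-select/$\tau$-taut machinery), applied directly in $M$ for peripheral links and in the track exterior $\hat M$ after reducing the general case to a peripheral one there. The only spot where your write-up is looser than necessary is the ``choice of side'' when transplanting the bounding disk into $\hat M$: the paper's observation that, for an irreducible, adigonal, anannular representative, a trivial leaf lies in a product-foliated annulus component of $V(\tau)$ means the two push-offs of $\gamma$ are parallel curves on a boundary torus of the removed product, hence isotopic in $\bdry \hat M$, so either side works and no matching with the side from which the Seifert lamination emerges is required.
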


\section{Preliminaries}\label{Preliminaries}

\begin{defns}  Suppose a surface $V$ can be subdivided into rectangles or {\it charts} of the form $I\times J$, where $I=[0,1]$ and $J=[0,w]$ foliated by leaves $I\times \{u\}$ and by fibers $\{t\}\times J$. We require that $\bigcup I\times \bdry J\subset \bdry V$, where the union is over charts.   Another chart (or the same one) intersects $I\times J$ in sub-intervals of fibers in $\bdry I\times J$.   Then $V$ with the singular foliation obtained as the union of foliated charts is a {\it dimension 1 prelamination}.   If adjacent charts intersect such that the metrics on intersecting fibers coincide on the intersection, the prelamination is {\it transversely measured}.

  The chart foliations yield a foliation of $V\setminus C$, where $C$ is the set of {\it cusp points} on $\bdry V$, which are points that lie on the interior of a fiber of one chart and also at the ends of fibers of two adjacent charts.  A {\it preleaf} of the prelamination is a leaf of $V\setminus C$ completed by any cusp points at the ends of the leaf.  See Figures \ref{Leaves} and \ref{PreLam}.   A {\it splitting} of a prelamination, is another prelamination obtained by cutting on a finite union of compact submanifolds of preleaves, provided these submanifolds intersect $\bdry V(\tau)$ only at cusps.   {\it Pinching} is the opposite of splitting.  

We can make similar definitions in other dimensions.  A {\it  dimension $m$ prelamination} is constructed using charts of the form $I\times J$ where $I$ is a disk of dimension $m$ and $J$ is an interval as before; the charts intersect only at their vertical  boundaries  $\bdry I\times J$ and cover an $(m+1)$- manifold $V$ with an inward cusp $(m-1)$-manifold $C$ in $\bdry V$.  (One can allow $J$ of higher dimension but we will not use such prelaminations.)

\begin{figure}[ht]
\centering
\scalebox{0.8}{\includegraphics{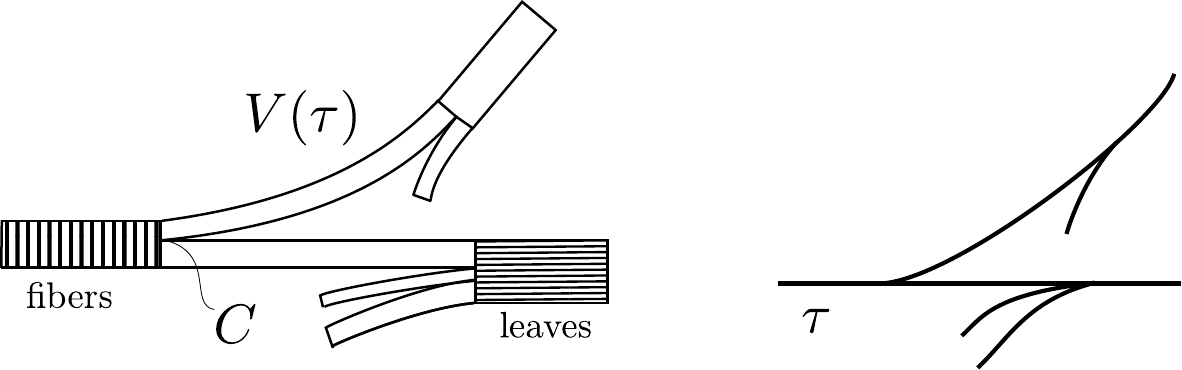}}
\caption{\footnotesize Chart structure for a prelamination.}
\label{PreLam}
\end{figure}

When a prelamination (of dimension $m$) is embedded in a manifold $M$ of dimension $m+1$, then we often call the embedded submanifold $V$ with its foliation by fibers a {\it fibered neighborhood of a branched manifold} of dimension $m$.  Even in the absence of a foliation transverse to the fibers, we refer to the union of charts foliated by fibers as a {\it fibered neighborhood}.  A {\it branched manifold} is a quotient obtained from a fibered neighborhood by identifying all points on any fiber, where the quotient map is often thought of as a projection.   For example, if $m=1$, the quotient is called a {\it train track} $\tau$, the fibered neighborhood is denoted $V(\tau)$ and the quotient map is $\pi:V(\tau)\to \tau$.  For $m=2$, a fibered neighborhood $V(B)$ projects to a {\it branched surface} $B$.  In any dimension the quotient $B$ is given a smooth structure such that a smooth $m$-disk locally embedded transverse to fibers of projects to a smooth disk in the quotient branched manifold.  In any dimension, we let $C$ be the {\it cusp locus} in $\bdry V$.   Then $\pi(C)$ is called the {\it branch locus}  ({\it switch points} for train tracks) and the completions of the components of the complement of the branch locus are called {\it sectors}.  For a train track $\tau$ every sector is either a closed curve component of $\tau$ or a sector homeomorphic to an interval, which we call a {\it segment}, .    We will use the notation $V(\tau)$ to denote a 2-dimensional fibered neighborhood even when $V(\tau)$ is embedded in a 3-manifold.  

A prelamination constructed as above is {\it fully carried} by the corresponding branched manifold.  

A codimension-1 {\it lamination} $L\embed M$ in an $m+1$-manifold $M$ is a closed foliated subset of $M$.

If a 1-dimensional prelamination is transversely measured, then there is a chart corresponding to a segment $s_i$, which has the form $I\times [0,w_i]$, and is assigned a weight $w_i>0$.   For a closed curve sector $s_i$, there is a chart of the form $S^1\times [0,w_i]$, and we again assign a weight $w_i$ to the sector.  In this case we could equally well use a chart of the form $I\times [0,w_i]$.  The weights yield an ``invariant weight vector", which is characterized by the following property:  If $s_1$, $s_2$, and $s_3$ have a common switch point $P$, and both $s_1\cup s_3$ and $s_2\cup s_3$ are locally smooth at $P$, then $w_1+w_2=w_3$.   This last equation is called a {\it switch equation}.   An {\it invariant weight vector} is a weight vector $\bow$ which assigns $w_i\ge 0$ to each segment $s_i$ of $\tau$ and such that the entries of the weight vector satisfy all the switch equations for $\tau$. \end{defns}

Clearly if $\tau$ is embedded in a surface, an invariant weight vector for $\tau$ with positive entries determines a measured prelamination $\tau(\bow)$ or $V_\bow(\tau)$; if the invariant weight vector does not have positive entries, it determines a prelamination fully carried by a sub- train track of $\tau$.  Also, given $V(\tau)$ a fibered neighborhood, the invariant weight vector determines a prelamination $V_\bow(\tau)$.

There are similar definitions of invariant weight vectors for branched surfaces and branched manifolds.   If $B$ is a branched surface embedded in a 3-manifold $M$, then an invariant weight vector $\bov$ determines a lamination $B(\bov)$.    If $B$ happens to be a surface $F$, the components are sectors.  An invariant weight vector with equal weights $\delta$ on all components determines a {\it weighted surface} $\delta F$.  

There is another point of view on laminations and fibered neighborhoods which is sometimes preferable.   A lamination can be constructed as an inverse limit of prelaminations obtained by performing ``infinite splitting" along leaves intersecting the cusp locus $C$ of a prelamination.  This can be done such that the leaves of the lamination are smooth with respect to a smooth structure on $M$.   A transverse measure on a prelamination then gives a transverse measure on the corresponding lamination.   When dealing with laminations rather than prelaminations, one uses a different kind of fibered neighborhood, as in the following definitions.

\begin{figure}[ht]
\centering
\scalebox{0.8}{\includegraphics{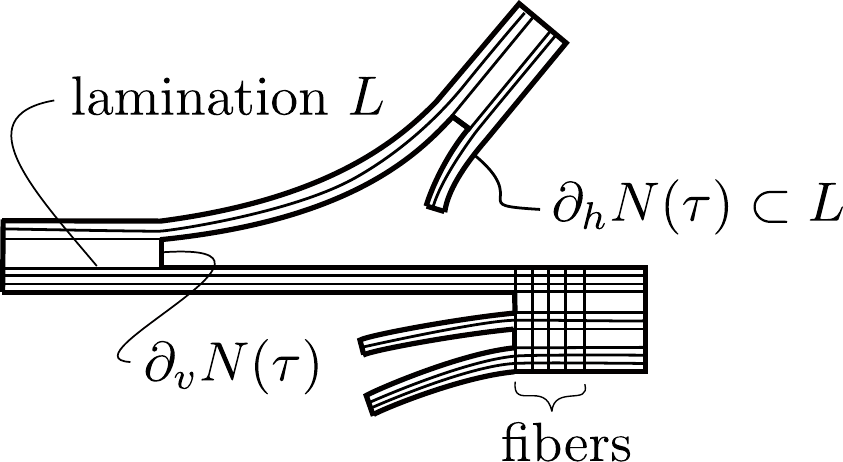}}
\caption{\footnotesize Fibered neighborhood $N(\tau)$, with vertical boundary, of a branched manifold.}
\label{Lam}
\end{figure}

\begin{defns} \label{Neighborhood} If $B\embed M$ is a codimension-1 branched manifold of dimension $m$ embedded in a manifold $M$, there is a corresponding fibered neighborhood $N(B)\embed M$.  Once again, $N(B)$ is ``vertically" foliated by {\it fibers} and there is a quotient map $\pi:N(B)\to B$ which identifies all the points on each fiber and yields the branched manifold $B$.  In Figure \ref{Lam} we show a typical fibered neighborhood when $B=\tau$ is a train track and $M$ is a surface.  Instead of a cusp locus there is a {\it vertical boundary} $\bdry_vN(B)$ contained in $\bdry N(B)$, which is an $I$-bundle over an $(m-1)$-manifold.   For example, if $B$ is a branched surface in a 3-manifold, the vertical boundary is a union of annuli, fibered by intervals.   The {\it horizontal boundary} $\bdry_hN(B)$ is the closure of $\bdry N(B)\setminus\left(\bdry_vN(B)\cup \bdry M\right)$.

A lamination $L\embed M$ is {\it carried} by $B$ if, after isotopy, it can be embedded in $N(B)$ transverse to fibers.   It is {\it fully carried} by $B$ if it can be isotoped so it is contained in $N(B)$ and transverse to fibers and intersecting every fiber.    If it is fully carried by $B$, $L$ can be modified slightly (by replacing any isolated leaf by the boundary of an immersed regular neighborhood of the leaf) so that $\bdry_hN(B)\subset L$.   When $\bdry_hN(B)\subset L$, the completion of $N(B)\setminus L$ has the structure of an $I$-bundle called the {\it interstitial bundle}.    If this bundle has the form $p:J\to W$, where $W$ is a surface with boundary, then we use $\bdry_vJ$ to denote $p\inverse(\bdry W)$.

If $B\embed M$ is a branched surface, a {\it disk of contact} for $B$ is a disk $D\embed N(B)$ transverse to fibers with $\bdry D\embed \intr(\bdry_vN(B))$.  Sometimes if $F$ is a surface fully carried by $B$, and we have  $\bdry_hN(B)\subset F$, a disk of contact $D$ can be isotoped vertically so that $D\subset F$.  In this case, we say $D$ is a {\it disk of contact in $F$} although $\bdry D\nsubset \intr(\bdry_vN(B))$.
\end{defns}

The notion of splitting a branched manifold is most easily described using the fibered neighborhood $N(B)$.

\begin{defn}  If $B\embed M$ and $B'\embed M$ are codimension-1 branched manifolds, then $B'$ is a {\it splitting} of $B$ if $N(B)=N(B')\cup J$, where $J$ is (the total space of) an $I$-bundle $p:J\to W$ over a surface $W$ and:

\begin{tightenum}
\item $\bdry_hJ\subset \bdry_hN(B')$,
\item $J\cap N(B')\subset \bdry J$, $J\cap N(B')\subset\bdry N(B')$,
\item $p\inverse(\bdry W)\subset \bdry J$ intersects $\bdry N(B')$ in finitely many components, each contained in $\bdry_vN(B')$, with each fiber in $\bdry J$ contained in a fiber of $N(B')$.
\end{tightenum}
\end{defn}

\begin{defn}  If $B\embed M$ is a branched surface embedded in a 3-manifold, it has {\it generic branch locus} if the projection $V(B)\to B$ maps the cusp locus $C$ to $B$ such that self-intersections of $C$ are transverse and in general position.  Then the sectors of $B$ are {\it surfaces with corners}, see Figure \ref{BranchedNeighborhood}.  If $Z$ is such a sector, we assign a {\it geometric Euler characteristic} to $Z$, $\Chi_g(Z)=\Chi(Z)-(1/4)k$, where $k$ is the number of corners of $Z$.   

If $\bov$ is an invariant weight vector for a branched surface $B$ and $v_i$ is the weight on the sector $Z_i$, then we define $\Chi(B(\bov))=\sum_i v_i\Chi_g(Z_i)$.

\end{defn}

There is some work needed to make sense of the above definitions.   For example, one must show that the Euler characteristic of a measured lamination is well-defined, not depending on the choice of $B$ and $\bov$ used to represent the lamination.

\begin{defn}  Suppose $(B,\bdry B)\embed (M,\bdry M)$.   Suppose $\bov$ is an invariant weight vector on a branched surface $B$.   Then we let $\bdry \bov$ denote the invariant weight vector induced on the train track $\bdry B$, obtained by restricting $\bov$ to $\bdry B$. 
\end{defn}

Note that each entry of $\bdry\bov$ is an entry of $\bov$, but a sector $Z$ of $B$ can intersect $\bdry B$ in several sectors of the train track, so an entry of $\bov$ can appear more than once in $\bdry \bov$.

\begin{defns}  

The {\it weight cone} $\C(\tau)$ for a train track is the convex cone of invariant weight vectors on $\tau$.  More precisely, it is the set $\bow\in \reals^p$ such that $w_i\ge 0$ for all $i$ and all the switch equations are satisfied.  The {\it  weight cell} $\PC(\tau)$ is the intersection of the weight cone with the the hyperplane $\sum w_i=1$, where the sum is over weights on 
segments of $\tau$.   We similarly define the weight cone and weight cell for branched surfaces.
\end{defns}


\section {Peripheral links.}\label{Peripheral}

Rather than immediately studying arbitrary lamination links in an orientable, compact 3-manifold, it is better to begin by restricting attention to peripheral lamination links in a 3-manifold with boundary.  

\begin{defn}  A {\it peripheral oriented measured lamination link} (or a {\it peripheral link}  in an orientable, compact 3-manifold $(M,\bdry M)$ is an oriented 1-dimensional  measured lamination $(L,\mu)$ in $\bdry M$.   The link can be represented as $V_\bow(\tau)$ where $V(\tau)$ is a fibered neighborhood in $\bdry M$ of a tangentially oriented train track $\tau$.   In this context, when $\tau$ is contained in a surface $\bdry M$, $V(\tau)\embed \bdry M$ is determined by $\tau$ up to isotopy, and we can simply write $V_\bow(\tau)$ as $\tau({\bf w})$.
\end{defn}




\begin{lemma} \label{BoundingConvexLemma}  Suppose $M$ is an compact, orientable 3-manifold with boundary.  Suppose $\tau$ is an embedded oriented train track in $\bdry M$.  The set of invariant weight vectors $\bow$ such that $\tau(\bow)$ is  carried by $\tau$ and 
bounds an oriented 2-dimensional measured lamination $(K,\nu)$ in $M$ is a convex subcone of $\C(\tau)$.
\end{lemma}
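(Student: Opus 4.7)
The goal is to verify two closure properties of the set $\CB(\tau)$: closure under positive scaling (cone property), and closure under vector sum (convexity). For the cone property, if $(K,\nu)$ is an oriented 2-dimensional measured lamination in $M$ with $\bdry(K,\nu)=\tau(\bow)$, then for $\lambda>0$ the rescaled pair $(K,\lambda\nu)$ is an oriented measured lamination with $\bdry(K,\lambda\nu)=\tau(\lambda\bow)$; thus $\lambda\bow\in\CB(\tau)$. This leaves convexity as the real content of the lemma.

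For convexity, suppose $\bow_1,\bow_2\in \CB(\tau)$ with Seifert laminations $(K_i,\nu_i)$ realized as $B_i(\bov_i)$ for oriented branched surfaces $B_i\embed M$ whose boundaries are sub-train-tracks of $\tau$, and with $\bdry B_i(\bov_i)=\tau(\bow_i)$. I would produce a Seifert lamination for $\tau(\bow_1+\bow_2)$ by combining $B_1$ and $B_2$. First, isotope $B_1$ and $B_2$ to be in general position in $\intr(M)$, so that $B_1\cap B_2$ is a 1-complex meeting each sector transversely. At each intersection curve $c$, four oriented half-sheets meet; using the coherent orientations, smooth the intersection so that the two sheets from $B_1$ pass through $c$ smoothly and the two sheets from $B_2$ do likewise. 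This smoothing does not create new branching, and the outcome is a single oriented branched surface $B\subset M$ carrying both $B_1(\bov_1)$ and $B_2(\bov_2)$.

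Second, handle the boundary. Push $\bdry B_1$ and $\bdry B_2$ to two disjoint parallel copies of $\tau$ inside $V(\tau)\embed\bdry M$; their union is carried by $V(\tau)$ and pinches back to $\tau$ with the sum of the two weight vectors. The combined branched surface $B$ then satisfies $\bdry B\subset\tau$, inherits an invariant weight vector $\bov$ that restricts to $\bov_i$ on the sectors inherited from $B_i$, and its oriented boundary weight vector on $\tau$ is $\bow_1+\bow_2$. The resulting lamination $B(\bov)$ is an oriented 2-dimensional measured lamination in $M$ with $\bdry B(\bov)=\tau(\bow_1+\bow_2)$, proving $\bow_1+\bow_2\in\CB(\tau)$.

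The principal technical obstacle is the smoothing step: one must verify that the oriented transverse intersections of $B_1$ and $B_2$ can be smoothed so that the result is again an embedded branched surface (no new branch locus is introduced, and the previously existing branch loci of $B_1$ and $B_2$ remain in general position with each other and with the new smoothed sheets). The orientation hypothesis is exactly what makes this possible: each intersection curve has a well-defined pairing of sheets, so the ``$+$'' configuration transverse to $c$ can be resolved into two non-branching sheets. A conceptually cleaner alternative, which I would present in parallel, is to argue directly with the laminations: perturb so $\bdry K_1$ and $\bdry K_2$ are disjoint parallel copies in $V(\tau)$, then extend this into a perturbation that makes $K_1$ and $K_2$ disjoint in $\intr(M)$, and take the disjoint union with the sum transverse measure.
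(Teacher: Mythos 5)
Your scaling argument is fine, and your instinct for convexity (combine $B_1$ and $B_2$ along their intersection using the orientations) is the right one, but the key step is wrong as stated. After putting $B_1$ and $B_2$ in general position, $B_1\cap B_2=\tau\cup\rho$, where $\rho$ is a properly embedded train track in $M$ (it acquires switches where the branch locus of one branched surface crosses the other). A configuration in which ``the two sheets from $B_1$ pass through $c$ smoothly and the two sheets from $B_2$ do likewise'' is a \emph{transverse} crossing of sheets, which is not a local model for a branched surface; so the claim that this ``smoothing'' yields a single branched surface while creating no new branching cannot be right. If instead you intend an oriented cut-and-paste (swapping half-sheets across $c$), that also fails: the sectors of $B_1$ and $B_2$ meeting along $\rho$ carry different weights in general, so the swapped sheets inherit no well-defined invariant weight vector, and $\rho$ is a train track rather than a 1-manifold. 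The paper's resolution is to \emph{pinch}: identify a regular neighborhood of $B_1\cap B_2$ in $B_1$ with the corresponding regular neighborhood in $B_2$, tangentially and respecting orientations. This deliberately \emph{creates} new branch locus along the boundary of the identified neighborhood and yields a branched surface $B$ containing $B_1$ and $B_2$ as sub-branched surfaces; each $\bov_i$ then extends by zero to an invariant weight vector on $B$ (the pinched sectors receiving the sum of the two contributions), and $B(\bov_1+\bov_2)$ is the required Seifert lamination for $\tau(\bow_1+\bow_2)$.

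Your ``conceptually cleaner alternative'' is not available either: two 2-dimensional laminations (or surfaces) in $M$ with disjoint boundaries cannot in general be perturbed to be disjoint in $\intr(M)$ --- essential interior intersections of Seifert surfaces with the same boundary slope are unavoidable in general --- which is precisely why a pinching (or cut-and-paste) device is needed at all.
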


\begin{proof}  We must show that the set of invariant weights $\bow$ on $\tau$ such that $\tau(\bow)$ bounds a lamination is closed under addition and multiplication by positive scalars.   

First we show that bounding laminations carried by $\tau$ are closed under multiplication by positive scalars.   This is easy, since if $\tau(\bow)$ bounds an oriented 2-lamination, then there is an oriented branched surface $B$ with $\bdry B=\tau$, and there exists an invariant weight vector $\bov$ for $B$ whose restriction to $\tau$ is $\bow$.   This is because the measured lamination $\bdry B(\bov)=\tau(\bow)$.  Then if $\lambda>0$, $\bdry B(\lambda\bov)=\tau(\lambda\bow)$, and $\tau(\lambda\bow)$ bounds.

Next, we show that $\tau(\bow_1)$ bounds $B_1(\bov_1)$ and $\tau(\bow_2)$ bounds $B_2(\bov_2)$, then we can construct an oriented branched surface $B$ and an invariant weight vector $\bov$ for $B$ such that $\tau(\bow_1+\bow_2)$ bounds $B(\bov)$.   We may assume that $B_1$ and $B_2$ are generic, meaning that the branch locus consists of branch curves intersecting transversely at points in the interior of $M$, and $\bdry B_1=\bdry B_2=\tau$.  Now we isotope $B_1$ and $B_2$ (rel $\tau$) to a general position in the interior of $M$.   This means that $ B_1\cap B_2=\tau\cup \rho$, where $(\rho,\bdry \rho)$ is a train track properly embedded in $(M,\bdry M)$.   Finally we pinch and identify (respecting orientations) a regular neighborhood of $B_1\cap B_2$ in $B_1$ with a regular neighborhood of $B_1\cap B_2$ in $B_2$ to obtain a branched surface $B$ containing $B_1$ and $B_2$ as sub- branched surfaces.   The invariant weight vector $\bov_1$ can be regarded as an invariant weight vector for $B$, with some weights 0, and similarly for $\bov_2$.   Then $\tau(\bow_1+\bow_2)$ bounds the 2-dimensional lamination $B(\bov_1+\bov_2)$. \end{proof}

\begin{defns} With $\tau$ as in the statement of the lemma, the cone of invariant weights on $\tau$ which bound 2-dimensional measured laminations is denoted $\CB(\tau)$.   The projectivization of the cone $\CB(\tau)$ is denoted $\PCB(\tau)$.  

Suppose $M$ is an orientable 3-manifold with boundary.  An oriented surface $(F,\bdry F)\embed (M,\bdry M)$ is {\it maximal-$\Chi$} if $\Chi(F)=\max\{\Chi(G):\bdry G=\bdry F\text{ and } G \text{ has no closed components}\}$.  Viewing the oriented curve system $\bdry F$ as a peripheral link, we also say that $F$ is a {\it maximal-$\Chi$ Seifert surface} for the curve system or link $\bdry F$.

\end{defns}

When we say ``maximal-$\Chi$ Seifert surface," we usually mean the isotopy class of the maximal-$\Chi$ surface.
Saying a Seifert surface $F$ is maximal-$\Chi$ means almost the same thing as saying that $F$ is a minimal genus Seifert surface for the oriented peripheral link $\bdry F$.   However, if an oriented curve system consisting of two oriented curves bounds an oriented annulus, and the annulus is compressible, then the annulus is not maximal-$\Chi$.   Performing surgery on the compressing disk yields a maximal-$\Chi$ Seifert surface consisting of two disks.   Both the annulus and the pair of disks have minimal genus.

It is easy to show that a maximal-$\Chi$ surface $F$ is incompressible:  Suppose $D$ is a compressing disk for $F$.   Surgery on $D$ yields a surface $G$ with the same boundary and larger $\Chi$, possibly with closed components, but without sphere components.   Discarding closed components of $G$ yields another surface with the same boundary and $\Chi$ no smaller, which shows $F$ is not maximal-$\Chi$.

We shall see later, in Example \ref{NonBound}, that it is possible that an oriented curve system $C$ in $\bdry M$ does not bound an oriented surface, but when the curve system is viewed as a lamination, it bounds a oriented measured Seifert lamination.   The following lemma shows that in this case, there exists $k>1$ such that replacing $C$ by $k$ disjoint isotopic copies of itself, the resulting curve system does bound an oriented surface.

\begin{lemma}  If $\bow\in\CB(\tau)$ is an invariant weight vector with integer or rational entries, then there exists $k\in \naturals$ such that $\tau(k\bow)$ bounds an oriented surface, and bounds a maximal-$\Chi$ oriented surface.
\end{lemma}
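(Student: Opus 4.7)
The plan is to reduce the lemma to two facts: (i) a nonempty rational convex polyhedron contains a rational point; and (ii) on the set of oriented surfaces in $M$ bounding a fixed classical multicurve in $\bdry M$ and having no closed components, the Euler characteristic is integer-valued and bounded above, and so attains its maximum.

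First, since $\bow\in\CB(\tau)$, there exists an oriented branched surface $B\embed M$ with $\bdry B$ a sub-train-track of $\tau$ and an invariant weight vector $\bov\in C(B)$ such that $\bdry\bov=\bow$ (segments of $\tau\setminus\bdry B$ carry weight $0$). Consider $P=\{\bov'\in C(B):\bdry\bov'=\bow\}\subset\reals^N$, where $N$ is the number of sectors of $B$. The cone $C(B)$ is cut out by the switch equations for $B$, which have integer coefficients, together with the nonnegativity constraints $v'_i\ge 0$. The boundary restriction map has a $\{0,1\}$-valued matrix (each entry of $\bdry\bov'$ equals one of the entries of $\bov'$), so the condition $\bdry\bov'=\bow$ imposes linear equations with rational coefficients, as $\bow$ is rational by hypothesis. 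Thus $P$ is a rational convex polyhedron; it contains $\bov$, hence is nonempty, and therefore contains a rational point $\bov'$. Now choose $k\in\naturals$ clearing the denominators of the entries of $\bov'$. Then $k\bov'$ is a nonnegative integer invariant weight vector for $B$, and $B(k\bov')$ is an oriented embedded surface in $M$ (built from $k\bov'_i$ parallel copies of each sector $Z_i$, glued along branch curves as dictated by the switch equations), whose oriented boundary is $\tau(k\bow)$. Discarding any closed components yields an oriented surface $F$ with $\bdry F=\tau(k\bow)$ and no closed components, proving the first assertion.

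For the maximal-$\Chi$ conclusion, let $G$ range over oriented surfaces in $M$ with $\bdry G=\tau(k\bow)$ and no closed components. Every component of such a $G$ meets $\bdry G$, so the number of components of $G$ is at most the number of components of $\tau(k\bow)$, giving $\Chi(G)\le|\pi_0(\tau(k\bow))|$. Since this set of $G$ is nonempty (it contains $F$) and $\Chi$ is integer-valued on it, the supremum is attained, producing a maximal-$\Chi$ Seifert surface for $\tau(k\bow)$.

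The main obstacle, such as it is, is the rationality of the polyhedron $P$; this reduces to the rationality of the switch-equation matrix, of the boundary restriction matrix, and of the vector $\bow$ --- all immediate. The remainder is routine convex-polyhedral and integer-valued bookkeeping.
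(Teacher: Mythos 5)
Your proof is correct and follows essentially the same route as the paper: exhibit a carrying branched surface $B$, observe that $\{\bov'\in C(B):\bdry\bov'=\bow\}$ is a nonempty polyhedron cut out by rational linear constraints and hence contains a rational point, and clear denominators to obtain an integer weight vector giving a surface with boundary $\tau(k\bow)$. Your closing argument that the maximum of $\Chi$ is attained (integer-valued and bounded above by the number of boundary components) is a welcome explicit justification of a step the paper leaves implicit.
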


\begin{proof}  Since $\tau(\bow)$ bounds, we know there exists $B(\bov)$ with $\bdry B(\bov)=\tau(\bow)$.  We consider the set $A$ of all $\bov\in \C(B)$ with $\bdry \bov=\bow$.  The cone $\C(B)$ is defined by linear equations and inequalities in entries $v_j$ with integer coefficients.   To obtain $A$ we  also require one equation of the form $v_j=w_i$ for each entry $w_i$ of $\bow$, setting $w_i$ equal to one entry of $\bov$.  Thus we impose more linear equations in entries of $\bov$ with rational or integer coefficients.  Therefore the set $A$ is defined by linear equations and inequalities involving only rational coefficients.  Since it is non-empty, it must contain a point $\bou$ with only rational entries.   Then there exists $k\in \naturals$ so $k\bou$ is an invariant weight vector on $B$ with integer entries, and with $\bdry B(k\bou)=\tau(k\bow)$.  We have shown that $\tau(k\bow)$ bounds some surface.   Then it also bounds a maximal-$\Chi$ surface.
\end{proof}

\begin{defn} \label{SelectDef} Suppose $(B,\bdry B)\embed (M,\bdry M)$ is a branched surface properly embedded in $M$.  Suppose $B$ carries a torus $T\embed B$ bounding a solid torus.   Suppose $T$ is embedded in $N(B)$ transverse to fibers and bounds a solid torus $\bar T$, and there exists a finite collection of meridian discs $D_i$, $i=1,2,\ldots k$ of $\bar T$, also embedded in $N(B)$ transverse to fibers, such that the projection $\pi(\cup_iD_i\cup T)=\pi(\bar T\cap V(B))$.   Then we say  $B$ {\it contains a Reeb branched surface}   $\pi(\cup_iD_i\cup T)$.

Let $\tau$ be an oriented train track embedded in $\bdry M$.  We say the oriented branched surface $(B,\bdry B)\embed (M,\bdry M)$ is {\it $\tau$-select} if the following conditions hold:

\begin{tightenumi}
\item  $\bdry B=\tau$ or $\bdry B$ is a sub- train track of $\tau$,
\item $B$ is aspherical, i.e. carries no sphere,
\item $B$ fully carries an oriented maximal-$\Chi$ surface whose orientation agrees with that of $B$,
\item $B$ has no disk of contact,
\item $\bdry_hN(B)$ is incompressible in the closure of the complement of $N(B_i)$,
\item $B$ contains no Reeb branched surface,

\end{tightenumi}

\end{defn}
Note that in the definition we do not require that $M$ or $M\setminus \intr(N(B))$ be irreducible.

The statement of the following lemma involves the notion of a ``normal surface" with respect to a fixed triangulation of a 3-manifold $M$.  This will be explained in the proof.

\begin{lemma} \label{NormalLemma} (a) Suppose $M$ is a compact, orientable, irreducible 3-manifold with boundary and $\tau\embed \bdry M$ is an oriented train track in $\bdry M$.  Then there exists a finite collection $\{B_i:  i=1,2,\ldots m\}$ of $\tau$-select branched surfaces such that if $F$ is a maximal-$\Chi$ Seifert surface for a curve system $C=\bdry F$ carried by $\tau$ then, up to isotopy,  $F$ is fully carried by one of the branched surfaces $B_i$ of the collection. 

\hip \noindent (b)  If $F$ is a maximal-$\Chi$ Seifert surface for a curve system $C=\bdry F$ carried by $\tau$ then, up to isotopy,  $F$ is fully carried by some $B_i$ as a least area  normal surface with respect to a fixed triangulation of $M$ (least area among normal surfaces isotopic to a given maximal-$\Chi$ surface $F$ with $\bdry F=C$) . 
\hip

\noindent  (c) If $L$ is any measured lamination carried by one of the branched surfaces $B_i$, then every leaf of $L$ is $\pi_1$-injective.
\end{lemma}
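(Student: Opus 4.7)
My plan is to follow the Haken--Kneser normal surface machinery, adapted to surfaces with boundary on a train track, in the spirit of Oertel's earlier work on incompressible branched surfaces and the Gabai--Oertel theory of essential laminations. First I would fix a triangulation $\Stack$ of $M$ compatible with $\tau$, e.g.\ chosen so the restriction of $\Stack$ to $\bdry M$ places $\tau$ in the 1-skeleton (or transverse to the 1-skeleton in a controlled way, so that intersections of a normal surface with $\bdry M$ lie in $\tau$). Given a curve system $C$ carried by $\tau$ and a maximal-$\Chi$ Seifert surface $F$ with $\bdry F=C$, I would first isotope $F$ rel $\bdry F$ to a normal surface using the usual normalization procedure (legitimate because $F$ is incompressible and $M$ is irreducible, so no $\Chi$-reducing simplifications are forced), and then, among normal surfaces isotopic rel boundary to $F$, select one of least weight or least area. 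Each tetrahedron of $\Stack$ meets such $F$ in a collection of parallel copies of at most four triangle types and at most one quadrilateral type.

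For (a) and (b), I would then build a branched surface $B_F$ from $F$ by identifying parallel copies of each normal disk type in each tetrahedron to a single sector, pasting together across 2-faces, and fanning out at the tangencies to obtain a smooth branched surface. Because the number of possible disk-type assignments to tetrahedra is finite, this construction produces only finitely many branched surfaces $B_1,\ldots,B_m$, and each least-area normal $F$ is fully carried by one of them. Thus (b) follows immediately. To verify the $\tau$-select conditions of Definition \ref{SelectDef}, I would argue from the minimality of $F$: a disk of contact or compressing disk for $\bdry_h N(B_i)$ could be used, via the standard cut-and-paste argument, to isotope $F$ to a normal surface of strictly smaller weight in the same isotopy class while preserving $\Chi$, contradicting the choice of $F$; asphericity holds because $M$ is irreducible and any carried sphere bounds a ball, which by an innermost-disk argument could be removed without decreasing $\Chi(F)$ while reducing weight; a Reeb branched surface inside $B_i$ would carry a torus bounding a solid torus whose meridian disks could be pieced in to compress $F$ and strictly increase $\Chi$. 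Any $B_i$ in the finite list that fails these conditions can simply be discarded, since no maximal-$\Chi$ surface is fully carried by it.

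For (c), I would invoke the (by now standard) theorem of Gabai--Oertel that any measured lamination fully carried by a branched surface satisfying the essentiality conditions built into Definition \ref{SelectDef} (aspherical, no disk of contact, incompressible horizontal boundary, no Reeb branched surface) has $\pi_1$-injective leaves. The proof passes to the universal cover $\widetilde M$: using the absence of disks of contact and Reeb branched surfaces, together with incompressibility of $\bdry_h N(B_i)$ and asphericity, one shows that each component of the preimage of a leaf in $\widetilde M$ is a plane, which is exactly $\pi_1$-injectivity of the leaf in $M$.

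The main obstacle I anticipate is the bookkeeping in (a): ensuring that the branched surfaces constructed from normal disk assignments genuinely inherit all of the $\tau$-select properties from the weight-minimality of $F$, especially near $\bdry M$ where the boundary of the normal surface sits on $\tau$. In particular, I would need to check that compressions of $\bdry_h N(B_i)$ or disks of contact can always be translated into $\Chi$-non-decreasing, weight-decreasing modifications of a carried maximal-$\Chi$ surface, and that the exclusion of Reeb branched surfaces can be enforced by further refinement of the finite list without losing the property that every maximal-$\Chi$ $F$ is carried by some member.
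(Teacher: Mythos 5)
Your overall strategy (normalize a maximal-$\Chi$ surface, take a least-area representative, identify normal disk types to get finitely many branched surfaces, then verify the $\tau$-select conditions from minimality of $F$) is the same as the paper's. But there is a genuine gap at the step you describe as "any $B_i$ in the finite list that fails these conditions can simply be discarded, since no maximal-$\Chi$ surface is fully carried by it," and in the companion claim that a disk of contact "could be used \ldots to isotope $F$ to a normal surface of strictly smaller weight." Neither is true as stated: a least-area maximal-$\Chi$ normal surface can perfectly well be fully carried by a branched surface $\hat B$ that has a disk of contact, and the disk of contact does not by itself contradict incompressibility or least area of $F$ (it is a feature of the fibered neighborhood, and indeed the paper shows that such a disk can be pushed into $F$ itself). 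So you cannot discard those $\hat B$'s --- you would lose the surfaces they carry --- and you cannot derive a contradiction from the disk of contact. What the paper does instead is the real content of the lemma: it proves (Claim 1) that if $\hat B$ carries a sphere then it has a disk of contact \emph{in $F$}; (Claim 2) that any disk of contact can be traded for one in $F$, bounding a $P\times I$ piece of the interstitial bundle; and (Claim 3) that such disks of contact in $F$ are least-area disks, hence there are only finitely many of them. One then \emph{splits} each $\hat B$ along these finitely many least-area disks of contact, producing a still-finite list of branched surfaces without disks of contact that still fully carry every least-area maximal-$\Chi$ surface. Without Claim 3 (or some substitute), the finiteness of your list after removing disks of contact is unjustified.

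Two further points. First, you never address condition (iii) of Definition \ref{SelectDef}: that $B_i$ is orientable with orientation agreeing with that of the carried maximal-$\Chi$ surfaces. This requires an argument (the paper's ``swap'' of $\bdry_h\bar J_0$ for $\bdry_v\bar J_0$ on a component of the augmented interstitial bundle where orientations disagree, which produces a surface with larger $\Chi$ or smaller area, a contradiction); it does not come for free from the construction. Second, for part (c) your appeal to the essential-lamination machinery covers laminations \emph{fully} carried by $B_i$, but the statement concerns laminations merely \emph{carried} by $B_i$; the paper explicitly flags that the hard part of the cited theorem is precisely the not-fully-carried case, so your sketch of (c) does not yet prove what is claimed.
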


\begin{proof}

Choose a triangulation of $M$ such that $\tau$ is a normal train track with respect to the induced triangulation of $\bdry M$.  This means that the intersection with each 2-simplex in $\bdry M$ is one of the train tracks shown in Figure \ref{NormalTrainTrack}, up to isotopy and up to symmetries of the simplex. In particular, we require that the intersection of $\tau$ with each 2-simplex of $\bdry M$ be connected.  When choosing the triangulation, we also require that each simplex of any dimension intersect $\bdry M$ not at all or in a single simplex of dimension $\le 2$.  All of this is possible by choosing a sufficiently fine triangulation of $M$.  Suppose we are given an oriented curve system $C$ carried by $\tau$ and suppose $C$ bounds some oriented surface.  By our choice of $C$, if $C=\tau(\bow)$, then $\bow$ lies in the subcone $\CB(\tau)$ of $\C(\tau)$ representing measured laminations bounding oriented measured laminations.    Suppose $F$ is a maximal-$\Chi$ oriented surface bounded by $C$, which means it has maximal $\Chi$ among surfaces $F$ without closed components and satisfying $\bdry F=C$.  We explained earlier that $F$ is incompressible. 

\begin{figure}[H]
\centering
\scalebox{0.5}{\includegraphics{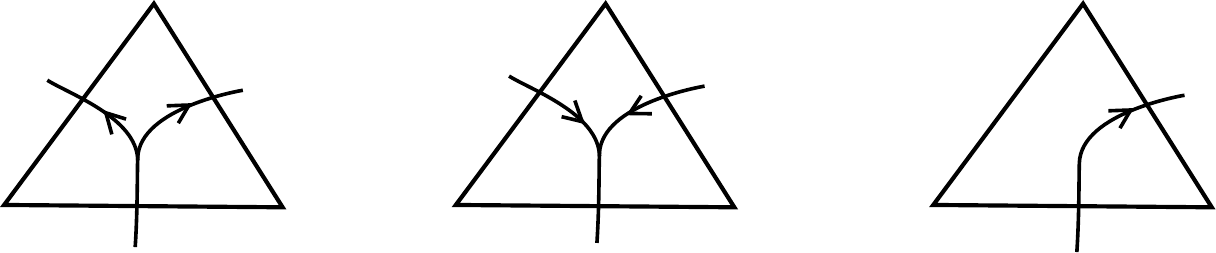}}
\caption{\footnotesize  Normal train track.}
\label{NormalTrainTrack}
\end{figure}

The fact that $F$ is incompressible allows us to use a modernized and adapted version of W. Haken's normal surface theory, \cite{WH:Normal}.
We will put $F$ into normal position with respect to the triangulation of $M$ without changing $\bdry F=C$.   As usual in normal surface theory, we begin by isotoping $F$ (rel $\bdry M$) to be in general position with respect to the triangulation, transverse to 1-simplices and 2-simplices and disjoint from vertices.   We use the usual ``combinatorial area" complexity $\gamma(F)$, equal to the number of intersections of $F$ with the 1-skeleton of the triangulation.  By our choice of triangulation and the fact that $\tau$ is oriented and that the orientation of $F$ must be compatible with that of $\tau$, we easily see that if $\sigma$ is a 2-simplex with exactly one edge in $\bdry M$, there can be no arc of $F\cap \sigma$ with both ends in $\sigma \cap \bdry M$.  As in the usual normal surface theory, a closed curve $\alpha$ of $F\cap \sigma$ innermost in $\sigma$ for any 2-simplex $\sigma$ (not contained in $\bdry M$)  can be eliminated by replacing the disk bounded in $F$ by $\alpha$ by the disk bounded in $\sigma$ by $\alpha$, then isotoping the resulting $F$ slightly to eliminate a closed curve of intersection.  If $F\cap \sigma$ contains an arc with both ends in the same 1-simplex $\rho$ not contained in $\bdry M$, then isotoping the arc to an arc in $\rho$ and a little beyond, extending the isotopy to $F$, reduces the complexity of $F$. Thus we may assume that $F$ is in normal form with respect to the triangulation.   This means that $F$ intersects 3-simplices in disks having the combinatorial type of the normal disk types shown in Figure \ref{NormalTypes}, up to symmetries of the simplex.   We now also assume that it has minimal complexity among all normal maximal-$\Chi$ surfaces $F$ with $\bdry F=C$.  

\begin{figure}[ht]
\centering
\scalebox{0.5}{\includegraphics{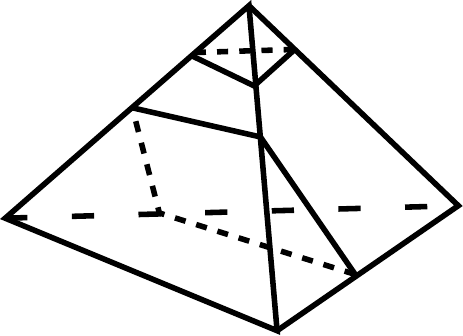}}
\caption{\footnotesize  Normal disk types.}
\label{NormalTypes}
\end{figure}

From a maximal-$\Chi$ surface of minimal area, we construct a branched surface $\hat B$ by identifying disks of $F\cap \rho$ for every 3-simplex $\rho$ of the triangulation of $M$ if they they belong to the same normal disk type, and extending the identification slightly to regular neighborhoods.  Thus $\hat B$ is the union of normal disk types that occur in $F$, appropriately joined at 2-simplices to form a branched surface.  Evidently, there are just finitely many possibilities (up to isotopy) for $\hat B$, provided we extend the locus of identification of disks of the same type in a ``standard way."   If we replace $F$ by two parallel copies of itself, then we may assume that $F$ is transverse to fibers of $N(\hat B)$ with $\bdry_hN(\hat B)\subset F$.   This means that there is a well-defined interstitial bundle for $F$ in $N(\hat B)$.

We were somewhat vague above when we said that identifications of disks of the same type should be extended in a ``standard way."   This can be made completely precise by replacing the triangulation of $M$ by a dual handle-decomposition.   Interior vertices become 3-handles, interior 1-simplices become 2-handles,  interior 2-simplices become 1-handles, interior 3-simplices become 0-handles.  A vertex on the boundary becomes a 3-handle which intersects $\bdry M$ in a 2-handle of $\bdry M$. A 1-simplex in the boundary becomes a 2-handle which intersects $\bdry M$ in a 1-handle of $\bdry M$.  A 2-simplex in the boundary becomes a 1-handle which intersects $\bdry M$ in a 0-handle of $\bdry M$. The original normal surface theory, see \cite{WH:Normal}, was explained in terms of handle-decompositions, with surfaces intersecting 0-handles, 1-handles, and 2-handles in disks belonging to various disk types.   Identifying disks of the same type in this setting yields $\hat B$.

Our goal now is to modify $\hat B$ to ensure that it has no disks of contact.  In fact, we want to show that there are finitely many ways to modify $\hat B$ such that all the normal minimal complexity surfaces carried by by $\hat B$ are also carried by one of the finitely many modified branched surfaces.   We will modify $\hat B$ by removing some interstitial bundle from $N(\hat B)$ to obtain a new branched surface neighborhood $N(B)$ of a new branched surface which has neither disks of contact nor carries spheres.   There is a description of this process in \cite{WFUO:IncompressibleViaBranched}; we will give essentially the same argument here, somewhat reorganized.

\hop
\noindent {\it Claim 1: If $\hat B$ carries a sphere and fully carries the maximal-$\Chi$ surface $F$, then $\hat B$ has a disk of contact $E$ in $F$. }

Suppose $\hat B$ carries a sphere $S$, transverse to fibers of $N(\hat B)$ and transverse to the surface $F$ carried by $\hat B$.  Assume, for now, that both $S$ and $F$ are disjoint from $\bdry_hN(\hat B)$.   We will show that we can replace $S$ with another sphere $S'$ transverse to fibers with fewer curves of intersection with $F$.   Since $F$ is incompressible and two-sided, it is also $\pi_1$-injective, which means that any curve of $F\cap S$ bounds a disk in $F$.   Choose a curve $\alpha$  of $F\cap S$ innermost on $F$.   Then $\alpha$ bounds a disk $D$ in $F$  and it bounds two disks $H$ and $H'$, with disjoint interiors  in $S$.  Then either $H\cup D$ or $H'\cup D$ is an embedded sphere $S'$ carried by $\hat B$.   After a small isotopy to push $S'$ off of $D$, $S'$ has fewer curves of intersection with $F$ than $S$.  Repeating, we obtain a sphere transverse to fibers and disjoint from $F$, which we will again call $S$.   

By irreducibility of $M$, $S$ bounds a ball $K$ in $M$, and $F$ does not intersect $K$.  
We may now assume that both $S$ and $F$ are embedded transverse to fibers in $N(\hat B)$, with $\bdry_hN(\hat B)\subset (S\cup F)$.  
If there is no interstitial bundle for $S\cup F$ in $K$, then $S\subset \bdry_hN(\hat B)$.  Since $F$ is also fully carried and can be isotoped so $\bdry_hN(\hat B)\subset F$, we conclude $F$ has a sphere component, contrary to assumption.

If there is interstitial bundle for $S\cup F$ in $K$, let $J$ be the (total space of) the interstitial bundle.   Recall $\bdry_vJ$ denotes the ``vertical boundary" of $J$, which is a collection of annuli.   Then $\bdry(\bdry_vJ)$ is a curve system in $S$.   The curve system divides $S$ into regions which are colored alternately white if contained in $\bdry_hN(\hat B)$, and black if intersects $J$.   We claim that there is a white region $W$ which is a planar surface with more than one boundary, i.e. not a disk.   If all white regions were disks, then for every annulus $A$ of $\bdry_vJ$ there would be two disjoint white disks $D_0$ and $D_1$ adjacent to $A$ and $D_0\cup D_1\cup A$ is a sphere bounding a ball in $K$.   If we fill each of these balls with a product bundle extending $J$ to a bundle $\bar J$, say, then $\bar J$ gives an $I$-bundle structure to the ball $K$, with $\bdry_hJ=S=\bdry K$, which is impossible.  We conclude that there is at least one white region $W\subset \bdry_hN(\hat B)$ which is a planar surface with at least two boundary components.

Now we discard $S$ and isotope $F$ such that $\bdry_h\hat B\subset F$, and in particular $W\subset F$.  Let $\alpha$ be a curve of $\bdry W$, then because $F$ is incompressible, $\alpha$ bounds a disk $D$ in $F$.   Either $D$ is a disk of contact in $F$ or $D\supset W$.   In the latter case, $D$ must contain a disk of contact in $F$ with boundary any of the other curves of $\bdry W$. 


This completes the proof of Claim 1.

\hop
\noindent {\it Claim 2: If $\hat B$ has a disk of contact and fully carries the minimal complexity, maximal-$\Chi$ surface $F$, $F$ transverse to fibers of $N(\hat B)$ with $\bdry_hN(\hat B)\subset F$, then it has a disk of contact $E\subset F$.   Further, if $\bdry E\subset A$, where $A$ is an annular component of $\bdry_vN(\hat B)$, then the component of the interstitial bundle for $F$ in $N(\hat B)$ containing $A$ has the form $P\times I$, where $P$ is a planar surface and $P\times \{0\}\subset E$.} 

Suppose $E$ is a disk of contact for $\hat B$.   This means $E$ is a disk embedded in $N(\hat B)$ transverse to fibers with $\bdry E\subset \intr(\bdry_vN(\hat B))$.   We suppose that $F$ is also embedded in $N(\hat B)$ transverse to fibers.   We assume $\bdry_hN(\hat B)\subset F$ and $E$ is disjoint from the horizontal boundary.   We will show that we can replace $E$ with another disk of contact $E'$ transverse to fibers with fewer curves of intersection with $F$.   Since $F$ is $\pi_1$-injective, any curve of $F\cap S$ bounds a disk in $F$.  Choose a curve $\alpha$  of $F\cap E$ innermost on $F$.   Then $\alpha$ bounds a disk $D$ in $F$  and it bounds a disk $H$ in $E$.  Either $H\cup D$ is an embedded sphere $S$ carried by $\hat B$ or $(E\setminus H)\cup D$ is another disk of contact $E'$ with the same boundary as $E$.   In the latter case, after a small isotopy, $E'$ has fewer curves of intersection with $F$ than $E$.  In the former case, by Claim 1, $\hat B$ has a disk of contact in $F$.

Repeating the argument, we obtain a sphere carried by $\hat B$, and by Claim 1 we immediately get a disk of contact in $F$, or we get a  disk of contact with fewer curves of intersection with $F$.   Repeating as often as necessary, we either get a disk of contact in $F$ using Claim 1, or we  end with a disk of contact (transverse to fibers in $N(\hat B)$) disjoint from $F$.   If $E$ is such a disk of contact disjoint from $F$, then $\bdry E$ is in the interior of an annulus component $A$ of $\bdry_vN(\hat B)$.   From $E\cup A$, we obtain a null-homotopy for each curve of $\bdry A$ (which is an embedded curve in $F$) in the manifold $M|F$ obtained by cutting $M$ on $F$.  By the incompressibility of $F$, each component of $\bdry A$ must bound a disk in $F$, say the boundary components of $\bdry A$ bound disks $D_0$ and $D_1$ in $F$.  If $D_0\cup E$ or $D_1\cup E$  (together with an annulus in $A$) yields a sphere carried by $\hat B$, then $F$ must contain a closed component contained in the ball $K$ bounded by the sphere, see the schematic in Figure \ref{ClosedFig}.  This contradicts the minimal complexity of $F$. (The ball $K$ could be on the other side of $D_0\cup E\cup A$ in the figure.)  Thus both $D_0$ and $D_1$ are disks of contact in $F$, and there must be a  component of the interstitial bundle in the ball bounded by $D_0\cup D_1\cup A$ which is a product of the form $P\times I$, where $P$ is planar and $A\subset \bdry P\times I$.  

\begin{figure}[ht]
\centering
\scalebox{0.5}{\includegraphics{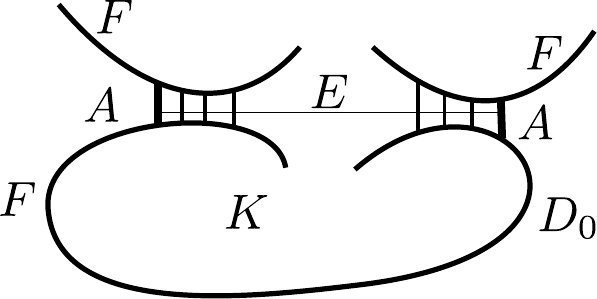}}
\caption{\footnotesize  Component of $F$ in a ball.}
\label{ClosedFig}
\end{figure}

In the above argument, if we obtain a disk of contact $D_0$ in $F$ from Claim 1, and $\bdry D_0\subset A$ where $A$ is an annulus of $\bdry_vN(\hat B)$, then the other boundary component of $A$ must also bound a disk $D_1$ in $F$.  Using an argument very similar to the one in the previous paragraph, we conclude $D_1$ is also a disk of contact in $F$, and there must be a  component of the interstitial bundle in the ball bounded by $D_0\cup D_1\cup A$ which is a product of the form $P\times I$, where $P$ is planar and $A\subset \bdry P\times I$. 

This completes the proof of Claim 2.

 \hop
 
 \noindent {\it Claim 3:  If $E$ is a disk of contact in $F$, then $E$ has minimal area among disks $G$ with $\bdry G=\bdry E$ in general position with respect to the triangulation.}
 
 Suppose $E$ does not have minimal area.  Let $A$ be the annular component of $\bdry_vN(B)$ containing $\bdry E$.   We suppose $E\subset F$.   Let $G$ be a disk with $\bdry G\subset \intr(A)$, so $\bdry G\cap F=\emptyset$, and $\gamma(G)<\gamma(E)$.  We isotope $G$ so it is transverse to $F$.  If $G$ is disjoint from $F$, then if $\bdry A\subset F$, we can isotope $\bdry G$ to $\bdry E \subset F$, and we see that $F$ does not have minimal complexity, since we could replace $E\subset F$ by $G$.  In general, if $G$ intersects $F$, let $\alpha$ be a closed curve of intersection innermost in $F$, bounding a disk $D$ in $F$ and bounding a disk $H$ in $G$.  Then $\gamma(D)\le \gamma(H)$ since $F$ has minimal area.  Isotoping $H$ to $D$ and slightly beyond, we obtain a new disk $G'$ with $\gamma(G')\le \gamma(G)$.   Also $G'$ intersects $F$ in fewer curves than $G$.   Repeating this argument, we finally obtain a disk $G$ disjoint from $F$ with the same boundary as the original $G$ and with $\gamma(G)<\gamma(E)$.   This contradicts the minimal complexity of $F$ and completes the proof of Claim 3.
 
 \hop

As we explained above, now for each $\hat B$ constructed from a normal minimal area maximal-$\Chi$ surface $F$, there is a finite collection of minimal area disks of contact in $F$, such that splitting on all of these disks of contact gives a branched surface $B$ carrying $F$.  More precisely, we saw that for every disk of contact $E$ in $\hat B$ with boundary in an annulus $A\subset \bdry_v\hat B$, there exist disks $D_0$ and $D_1$ in $F$ such that $A\cup D_0\cup D_1$ is a sphere bounding a ball containing a component of interstitial bundle for $F$ in $N(\hat B)$ of the form $P\times I$, where $P$ is planar and $A\subset \bdry P\times I$.   We split to eliminate the disc of contact by removing $P\times I$ and other components of interstitial bundle in $K$ from the interstitial bundle.  For a fixed $\hat B$, considering all possible least area maximal-$\Chi$ surfaces $F$ fully carried by $\hat B$, there are finitely many ways to eliminate all disks of contact by splitting as above,  because for each disk of contact and $F$, the corresponding disks of contact $D_0$ and $D_1$ in $F$ are least area disks, as we proved in Claim 3, so there are finitely many possibilities for $D_0$ and $D_1$.  We eliminate all disks of contact in this way.   So there are finitely many normal branched surfaces $B$ without disks of contact which fully carry all the minimal area surfaces $F$ carried by $\hat B$.   We showed that every maximal-$\Chi$ surface $F$ with $\bdry F$ carried by $\tau$ is carried by one of finitely many $\hat B$'s, so the end result is that we have finitely many  branched surfaces $B_i$ such that every maximal-$\Chi$ surface $F$ is fully carried by one of the $B_i$.  Further, each $B_i$ has no disk of contact, and does not carry a sphere.  

We first verify that each $B_i$ satisfies condition (v) in Definition \ref{SelectDef}, of a $\tau$-select branched surface.   To prove that $\bdry_hN(B_i)$ is incompressible in the (completion of the) complement of $N(B_i)$, we suppose $F$ is a minimal area, maximal-$\Chi$, incompressible surface fully carried by $B_i$, embedded transverse to fibers in $N(B_i)$ with $\bdry_hN(B_i)\subset F$.   If $D$ is a compressing disk for $\bdry_hN(B_i)$ in the completion of the complement of $N(B_i)$, then $D$ is also a potential compressing disk for $F$, which implies there exists a disk $D'\subset F$ with $\bdry D'=\bdry D$.   If $D'$ is not contained in $\bdry_hN(B_i)$, then clearly $D'$ contains a disk of contact, which is a contradiction.

The goal now is to show that each $B_i$ is orientable such that the orientation agrees with the orientations of the maximal-$\Chi$ surfaces. We use ideas from \cite{UO:Homology}.  In the cited paper, oriented surfaces are assumed to have minimal $\Chim$ among oriented surfaces representing the same homology class, which is a little different from our context, but the ideas carry through.

\begin{figure}[ht]
\centering
\scalebox{0.5}{\includegraphics{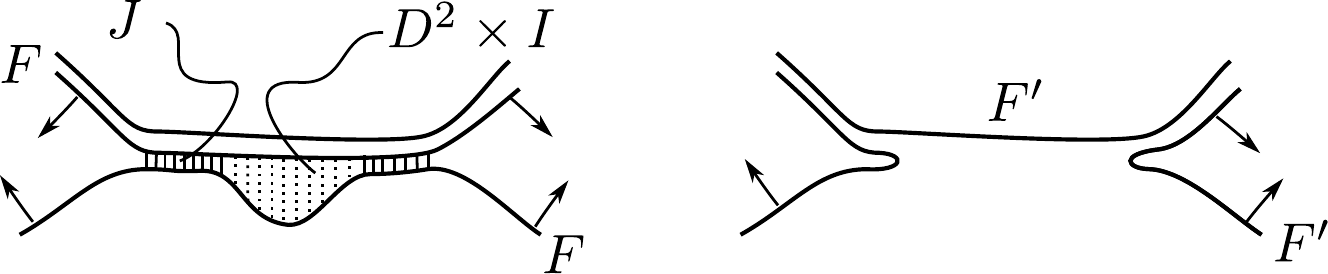}}
\caption{\footnotesize  Replacing $F$ by $F'$.}
\label{Swap}
\end{figure}

Let $B$ denote one of the $B_i$, and we suppose $F$ is a minimal area maximal-$\Chi$ surface fully carried by $B$, with $\bdry_hN(B)\subset F$.   We let $J$ denote the interstitial bundle for $F$.  We first observe that no component of $J$ can have the form $P\times I$ where $P$ is planar and $P\times \{0\}$ is contained in a disk $D_0$ in $F$.  In this case $B$ would have a disk of contact.

Before proceeding, we define an augmented interstitial bundle $\bar J$.   The bundle $\bar J$ is obtained from $J$ by including the ball components of the completion of $M\setminus N(B)$, which can be given a product structure of the form $D^2\times I$ with $D^2\times \bdry I$ in the horizontal boundary and $\bdry D^2\times I$ being a product component of the vertical boundary.   We have filled in $\hbox{disk}\times\hbox{interval}$ ``bubbles." 

  Now we claim that the orientations of $F$ at opposite ends of fibers of the interstitial bundle for $F$ in $N(B)$ must be consistent, to yield an orientation for $B$.  If not, we consider a component $\bar J_0$ of the augmented interstitial bundle with the property that the orientations of $F$ at opposite ends of the fiber are inconsistent.     A key observation in our situation is that $\bar J_0$ cannot intersect $\bdry M$, otherwise we have inconsistent orientations of $C=\bdry F$ carried by $\tau$.  (Recall that we chose $C$ to be carried as an oriented curve system obtaining its orientation from the orientation of $\tau$.)  Clearly $\bar J_0$ cannot be a product of the form $\hbox{disk}\times I$, since this would mean $B$ has a disk of contact.   We modify $F$ as shown in Figure \ref{Swap} by replacing $\bdry_h\bar J_0$ by $\bdry_v \bar J_0$.   This yields a new oriented surface $F'$ which has the same boundary exactly, and which can be put in normal position, although it is not normal.  If we show that $F'$ contains no sphere components and $\Chi(F')>\Chi(F)$ or $\Chi(F')=\Chi(F)$ and the complexity (area) of $F'$ can be made smaller than that of $F$ when $F'$ is isotoped to normal form, then we have a contradiction.  First we show that $F'$ contains no sphere components.  Suppose $F'$ contains a sphere component $S$, then $S$ contains at least one annulus of $\bdry_v\bar J_0$.   A curve of $\bdry (\bdry_v\bar J_0)$ innermost on $S$ yields a disk $D_0$ in $\bdry_hN(B)$.  The curve $\bdry D_0$ is one boundary curve of an annulus of $\bdry_vN(B)$.    Using property (v), that $\bdry_hN(B)$ is incompressible in the complement of $N(B)$, we know that the boundary of the potential compressing disk $D_0\cup A$ of the horizontal boundary must bound a disk $D_1$ in $\bdry_hN(B)$.  Then the sphere $D_0\cup A\cup D_1$ bounds a ball which can be given the structure of a product $D^2 \times I$.  This is a contradiction since we filled all complementary bubbles of this form to obtain $\bar J$.  The same argument actually shows that $F'$ does not contain any ``new" disk components intersecting $\bdry_vJ_0$.  Now it remains to show that either $\Chi(F')>\Chi(F)$  or $F'$ can be isotoped to reduce complexity.   If $\bar J_0$ is a bundle over an annulus, $\Chi(F')=\Chi(F)$, but clearly $F'$ can be isotoped to a normal surface with smaller area than $F$.  Otherwise, $\Chi(\bdry_h\bar J_0)<0$, so $\Chi(F')>\Chi(F)$, which contradicts our choice of $F$. 

We conclude that  $F$ is fully carried by one of finitely many {\it oriented} normal branched surfaces $B_i$ obtained from possible $\hat B$'s by splitting on finitely many least area disks of contact of each $\hat B$.  By construction, no $B_i$ has a disk of contact.   By Claim 1, no $B_i$ carries a sphere.

It remains to verify that every $B_i$ satisfies condition (vi).   To show that $B_i$ contains no Reeb branched surface, suppose $B_i$ carries a torus $T\embed B$ bounding a solid torus $\bar T$ with $T$ is embedded in $N(B)$ transverse to fibers which bounds the solid torus $\bar T$.  Suppose there exists a finite collection of meridian discs $D_i$, $i=1,2,\ldots k$ of $\bar T$, also embedded in $N(B)$ transverse to fibers, such that the projection $\pi(\cup_iD_i\cup T)=\pi(\bar T\cap N(B))$.   Then  $\pi(\cup_iD_i\cup T)$ is a Reeb branched surface $R$.  Note that $R$ is a branched surface in $\bar T$ which contains $\bdry \bar T=T$.   We can assume that $\bdry_hN(B_i)\cap \bar T\subset \cup_i D_i$ by replacing each $D_i$ by two isotopic copies of itself if necessary.   Then any component of $\bdry_hN(B_i)\cap \bar T$ is contained in some $D_i$, hence must be a disk.  It then follows from the incompressibility of $\bdry_hN(B_i)$ that every complementary component of $N(B_i)$ in $\bar T$ must have the form $E\times I$, where $E$ is a disk.   Let us suppose, without loss of generality, that the induced transverse orientation of $T$ is outward with respect to $\bar T$, otherwise we can reverse the orientation.   Let us also suppose that the disks $D_i$ are numbered such that isotoping $D_i$ in the direction of the transverse orientation across complementary products we can move $D_i$ such that it contains $D_{i+1}$.   Then isotoping all the $D_i$ in this way, we increase the combinatorial area of $\cup_i D_i$ by the area of $T$.   Isotoping in the opposite direction,  we decrease the area by the area of $T$.  Now suppose $F$ is a minimal area maximal-$\Chi$ surface fully carried by $B_i$.   Performing the same isotopies of disks in $F$ across complementary product regions in the sense opposite to the orientation, we can decrease the area of $F$ by the area of $T$.  This contradicts the fact that $F$ is a minimal area maximal-$\Chi$ surface.    

The proof of the $\pi_1$-injectivity of any leaf of any measured lamination $L$ carried by a $B_i$ follows below.
\end{proof}

We want to claim that the branched surfaces $B_i$ described in Lemma \ref{NormalLemma} have the property that any measured lamination carried by $B_i\embed M$ has $\pi_1$-injective leaves.   For this purpose, we state a modified version of Theorem 2.11 from  \cite{UO:MeasuredLaminations}.   The modification of the theorem has weaker hypotheses and also weaker conclusions, but the proof can be obtained directly from the previous paper by ignoring appropriate hypotheses and conclusions.  The difference is that here we are not interested in $\bdry$-incompressiblity of leaves, or injectivity of relative fundamental groups of leaves.  For stating the theorem, we introduce another condition on a branched surface:  We say $D$ is a {\it monogon} for the branched surface $B\embed M$ if $D\embed M$, $D\cap N(B)=\bdry D$, $D\cap \bdry_hN(B)$ is a closed arc of $\bdry D$ embedded in $\bdry_hN(B)$, and the complementary arc is the intersection of a fiber of $N(B)$ with $\bdry_vN(B)$.  It is obvious that an orientable branched surface $B\embed M$ has no monogons, but the following theorem also applies to non-orientable branched surfaces.

\begin{thm}  Suppose $B\embed M$ has the following properties:

\begin{tightenumi}
\item No component of $\bdry_hN(B)$ is a sphere,
\item $B$ has no disk of contact,
\item $\bdry_hN(B)$ is incompressible in the completion of $M\setminus N(B)$,
\item  there are no monogons for $B$,
\item $B$ contains no Reeb branched surface,
\end{tightenumi}

Suppose $L$ is any measured lamination carried by $B$.  If $\ell$ is a leaf of $L$, then the map $\ell\to M$ induces an injection $\pi_1(\ell)\to \pi_1(M)$.  
\end{thm}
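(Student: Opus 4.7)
The plan is to argue by contradiction, following the general strategy developed in \cite{UO:MeasuredLaminations} (and in the spirit of the Gabai–Oertel essential lamination arguments). Suppose some leaf $\ell$ of $L$ is not $\pi_1$-injective in $M$. Then there exists a loop $\gamma \subset \ell$ essential in $\ell$ but null-homotopic in $M$, giving a map $f\from (D^2,\bdry D^2)\to (M,\ell)$. Among all such $(f,\gamma)$ I would choose one minimizing a suitable complexity (for instance, transverse intersection number with $L$ together with number of components of $f\inverse(\bdry_h N(B))$, lexicographically). Then I would homotope $f$ so that its image is transverse to the leaves of $L$ off a finite singular set, and, away from $\bdry D^2$, transverse to the fibers of $N(B)$. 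The pullback $f\inverse(L)$ is a (possibly singular) measured lamination $\mathcal{L}$ on $D^2$ having $\bdry D^2$ as a boundary leaf.

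Next I would carry out a standard innermost–disk cleanup. Any closed leaf $\alpha$ of $\mathcal{L}$ interior to $D^2$ which is innermost bounds a subdisk $D_\alpha\subset D^2$; because $f(\alpha)$ lies in a leaf of $L$ and $L$ has $\pi_1$-injective leaves locally in $N(B)$ (via transversality to fibers, using incompressibility of $\bdry_h N(B)$ in $M\setminus N(B)$ together with hypothesis (i) that no component of $\bdry_h N(B)$ is a sphere), I can replace $f|_{D_\alpha}$ by a disk inside that leaf, contradicting minimality. In this way I may assume $\mathcal{L}$ has no closed leaves in the interior of $D^2$, and similarly no ``trivial'' intersection arcs of $f\inverse(\bdry_h N(B))$ are inessential.

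Now I would stratify $D^2$ by $f\inverse(N(B))$ and its complement, and analyze the complementary regions of $\mathcal{L}\cup f\inverse(\bdry N(B))$ in $D^2$. Each component $R$ of $f\inverse(\overline{M\setminus N(B)})$ is a planar surface whose boundary consists of arcs on $\bdry D^2$, arcs of $f\inverse(\bdry_h N(B))$, and arcs coming from fibers of $\bdry_v N(B)$. The non-existence of monogons for $B$ (hypothesis (iv)) forbids a component $R$ of index $+\tfrac12$ bounded by one horizontal arc and one fiber arc; the absence of a disk of contact (hypothesis (ii)) forbids a region in $f\inverse(N(B))$ of index $+1$ whose boundary consists entirely of fiber arcs; and the incompressibility of $\bdry_hN(B)$ (hypothesis (iii)) forbids a compressing sub-disk with boundary entirely in $f\inverse(\bdry_h N(B))$. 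Together with standard index counts, every interior complementary region of $\mathcal{L}$ in $D^2$ therefore has nonpositive index. Since $\Chi(D^2)=1$ must be recovered as the sum of local indices, all the ``positive curvature'' has to be concentrated at $\bdry D^2$ itself, which forces a very rigid pattern along the boundary.

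The main obstacle, and the place where hypothesis (v) is used, is ruling out the residual case in which the interior of $D^2$ is filled (after the cleanup) by a pattern of parallel annular complementary regions spiralling inward toward a limiting circle pattern, all capped off by one final disk region bounded by fiber arcs. Such a configuration would produce, via $f$, a torus $T$ carried by $B$ transverse to fibers of $N(B)$, together with a finite family of meridian disks $D_1,\dots,D_k$ of the solid torus bounded by $T$, also carried transverse to fibers, with $\pi(\bigcup_i D_i\cup T)=\pi(\bar T\cap N(B))$. This is exactly a Reeb branched surface in $B$, contradicting hypothesis (v). Assembling the torus and the meridian disks honestly from the pattern of arcs and annular regions in $D^2$ is the delicate step; once this final case is excluded, every possibility leads to a contradiction, so $\pi_1(\ell)\to \pi_1(M)$ is injective. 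The entire argument is (as the author remarks) the same as Theorem 2.11 in \cite{UO:MeasuredLaminations} with the boundary–incompressibility and relative $\pi_1$-injectivity conclusions, and their corresponding hypotheses, simply omitted.
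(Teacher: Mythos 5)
Your outline is the standard Gabai--Oertel style argument (pull back a compressing disk, clean up by innermost disks, count indices of complementary regions, and use the no-Reeb hypothesis to kill the last configuration), and this is indeed the shape of the proof of Theorem 2.11 of \cite{UO:MeasuredLaminations} that the paper simply cites; the paper itself gives no proof beyond that citation. So in broad strategy you are aligned with the source.

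However, there is a genuine gap, and it is precisely the one the author flags in the sentence following the theorem: the difficulty is proving $\pi_1$-injectivity of leaves when $L$ is carried but \emph{not fully carried} by $B$. Your argument implicitly works as if the complementary regions of $L$ in $M$ were just the complement of $N(B)$ together with the interstitial bundle, so that hypotheses (i)--(v) transfer directly to statements about the complement of $L$. That transfer is exactly what fails in general: a measured lamination carried by $B$ is fully carried only by some sub-branched surface $B'\subset B$, and $B'$ need not inherit the hypotheses --- a disk of contact for $B'$ has boundary in $\bdry_vN(B')$, which need not lie in $\bdry_vN(B)$; likewise $\bdry_hN(B')$ may be compressible, and monogons for $B'$ may appear, even when none exist for $B$. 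Your cleanup step also leans on this: the assertion that ``$L$ has $\pi_1$-injective leaves locally in $N(B)$'' via incompressibility of $\bdry_hN(B)$ is close to circular and, more importantly, only identifies boundary leaves of complementary regions of $L$ with components of $\bdry_hN(B)$ in the fully carried case. A complete proof must either show how the relevant properties pass to the complementary regions of $L$ itself (using that $L$ is \emph{measured}, which constrains how a sublamination can sit inside $N(B)$), or run the disk argument directly against $N(B)$ while accounting for the parts of $N(B)$ that $L$ misses. Until that is addressed, the index count and the final Reeb-configuration step are not justified for an arbitrary carried measured lamination.
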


As we mentioned, the proof is an easy adaptation of the proof of the proof of Theorem 2.11 in  \cite{UO:MeasuredLaminations}.   The difficult part of the theorem is showing that the leaves of a measured lamination carried by $B$ are $\pi_1$-injective {\it even if the lamination is not fully carried by $B$}.

The following corollary follows since the $\tau$-select branched surfaces $B_i$ guaranteed by Lemma \ref{NormalLemma} satisfy condition (i) above because they are aspherical, and they satisfy (iv) because they are orientable.

\begin{corollary} If $B$ is  a $\tau$-select branched surface, then any measured lamination carried by $B$ has $\pi_1$-injective leaves.
\end{corollary}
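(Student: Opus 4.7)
The plan is to show that every $\tau$-select branched surface $B$ satisfies the five hypotheses (i)--(v) of the preceding theorem, after which the corollary is immediate. So my job is purely to translate the six clauses of Definition \ref{SelectDef} into the five clauses of the theorem.

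First I would handle the easy identifications. Clause (iv) of Definition \ref{SelectDef} (no disk of contact) is identical to hypothesis (ii) of the theorem; clause (v) (incompressibility of $\bdry_hN(B)$ in the complement) is exactly hypothesis (iii); and clause (vi) (no Reeb branched surface) is exactly hypothesis (v). So three of the five verifications are immediate.

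Next I would dispose of hypothesis (iv), the absence of monogons. A $\tau$-select branched surface is orientable, since by clause (iii) it fully carries an oriented maximal-$\Chi$ surface whose orientation is compatible with an orientation of $B$; equivalently, $N(B)$ is a transversely oriented $I$-bundle over $B$. A monogon $D$ would give an arc in $\bdry_hN(B)$ together with a fiber arc in $\bdry_vN(B)$ bounding a disk, which forces the two sides of $\bdry_hN(B)$ along that fiber to be identified with opposite transverse orientations — impossible for an orientable $B$. This is the observation recorded in the paragraph introducing the theorem, so I would just cite it.

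Finally I would verify hypothesis (i), that no component of $\bdry_hN(B)$ is a sphere. Any component $S_0$ of $\bdry_hN(B)$ is itself fully carried by a sub-branched surface of $B$ (it sits transverse to fibers in $N(B)$ and is carried by the sub-branched surface obtained as its image under $\pi\colon N(B)\to B$). Hence if $S_0$ were a sphere, $B$ would carry a sphere, contradicting clause (ii) of Definition \ref{SelectDef} (asphericity). With all five hypotheses verified, the theorem applies and every leaf of every measured lamination carried by $B$ is $\pi_1$-injective in $M$, which is the conclusion of the corollary.

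The only place where one might pause is the orientability-implies-no-monogon step, since it relies on unpacking what it means for $B$ to be oriented as a branched surface in the sense of carrying a coherently oriented maximal-$\Chi$ surface. But this is routine once one observes that the transverse orientation on the carried surface propagates to a transverse orientation of $N(B)$ consistent along fibers, which is exactly what rules out a monogon. I expect no real obstacle beyond bookkeeping.
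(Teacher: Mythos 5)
Your proposal is correct and matches the paper's own argument: the paper likewise deduces the corollary by checking that a $\tau$-select branched surface satisfies the five hypotheses of the preceding theorem, with condition (i) following from asphericity and condition (iv) from orientability (the remark before the theorem), and the remaining three being verbatim clauses of Definition \ref{SelectDef}. Your extra detail on why a sphere component of $\bdry_hN(B)$ would be a carried sphere is a harmless elaboration of the same point.
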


 
 The following is a technical lemma we will use more than once.
 
 \begin{lemma} \label{TechLemma} Suppose we are given an oriented $\tau\embed \bdry M$ as in Lemma \ref{NormalLemma}, and a triangulation as in the proof.  Suppose $\{F_n\}$ is a sequence of oriented surfaces without sphere components with $\bdry F_n$ carried by $\tau$, $\bdry F_n=\tau(\boy_n)$.  Let $K_n$ be the sum of the entries of $\boy_n$ and let $\bow_n=\boy_n/K_n$, normalized so $\bow_n\in \PC(\tau)$.  Suppose $\bow_n\to \bow$ and suppose $\limsup \Chi(F_n)/K_n=P$.  Then there exists a $\tau$-select normal branched surface $B_i$ in the collection guaranteed by Lemma \ref{NormalLemma} and a measured lamination $B_i(\bov)$ such that $\bdry B_i(\bov)=\tau(\bow)$ and $\Chi(B_i(\bov))= P$.
 \end{lemma}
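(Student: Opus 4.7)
The strategy is to replace each $F_n$ by a maximal-$\Chi$ Seifert surface $G_n$ with the same boundary, use Lemma \ref{NormalLemma}(a) to funnel infinitely many $G_n$ onto a single $\tau$-select branched surface $B_i$ from the finite collection, and then extract a convergent subsequence from the normalized weight vectors to obtain the desired $\bov$.

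In detail, first pass to a subsequence along which $\Chi(F_n)/K_n \to P$. Since $F_n$ is itself a Seifert surface for $\tau(\boy_n)$, there exists a maximal-$\Chi$ Seifert surface $G_n$ with $\bdry G_n = \tau(\boy_n)$ and $\Chi(G_n) \ge \Chi(F_n)$. Lemma \ref{NormalLemma}(a) then places each $G_n$, up to isotopy, on some $B_{i(n)}$ from the finite collection, and the pigeonhole principle allows us to pass to a further subsequence on which $i(n)=i$ is constant; by Lemma \ref{NormalLemma}(b) we may further assume the $G_n$ are least-area normal representatives. Writing $G_n = B_i(\bov_n)$, we have $\bdry\bov_n=\boy_n$ and $\Chi(B_i(\bov_n))=\Chi(G_n)$. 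Set $\hat\bov_n=\bov_n/K_n$, so that $\bdry\hat\bov_n=\bow_n\to\bow$. Once we establish that $\{\hat\bov_n\}$ is bounded in $\C(B_i)$, we extract a further convergent subsequence $\hat\bov_n\to\bov$; the linearity (hence continuity) of $\bov\mapsto\bdry\bov$ and $\bov\mapsto \Chi(B_i(\bov))$ on $\C(B_i)$ yields $\bdry\bov=\bow$ and $\Chi(B_i(\bov))=\lim\Chi(G_n)/K_n\ge P$. Any excess over $P$ can be trimmed back to exactly $P$ by subtracting a suitable non-negative multiple of a closed invariant weight vector on $B_i$ (such a direction exists precisely in the non-compact case addressed below).

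The principal obstacle is the boundedness of the normalized sequence $\{\hat\bov_n\}$. Unboundedness could only arise from a nonzero recession direction, i.e., an element $\bov_0\in \C(B_i)$ with $\bdry\bov_0=0$, corresponding to a closed measured lamination $B_i(\bov_0)$ without boundary. Because $B_i$ is $\tau$-select, the corollary preceding this lemma ensures that every leaf of $B_i(\bov_0)$ is $\pi_1$-injective; since $B_i$ is aspherical and contains no Reeb branched surface, each closed leaf has non-positive Euler characteristic, so $\Chi(B_i(\bov_0))\le 0$. One then shows that any $\bov_n$ may be reduced to $\bov_n - t_n\bov_0$ with a suitable $t_n\ge 0$ so that the resulting weight vector still lies in $\C(B_i)$, still has boundary $\boy_n$, and has Euler characteristic no smaller than $\Chi(G_n)$; the least-area hypothesis on $G_n$ from Lemma \ref{NormalLemma}(b) forces the residual weights to be controlled by $K_n$ times a constant depending only on $B_i$ (otherwise one could split off an essential closed sub-lamination and produce a strictly smaller-area maximal-$\Chi$ representative). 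This gives the uniform bound on $\hat\bov_n$ and completes the argument.
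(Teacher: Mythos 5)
Your proposal follows the paper's argument in all essentials: replace $F_n$ by least-area maximal-$\Chi$ normal representatives, pigeonhole onto a single $\tau$-select $B_i$ from the finite collection of Lemma \ref{NormalLemma}, pass to a limit of normalized weight vectors, and use linearity of $\bdry$ and $\Chi$ on $\C(B_i)$. The one organizational difference is how compactness is obtained. You normalize by the boundary sum $K_n$ and must prove boundedness of $\bov_n/K_n$ directly, identifying the only possible failure as a recession direction $\bov_0$ with $\bdry\bov_0=0$. The paper instead normalizes by the total sum $L_n$ of the entries of $\bov_n$, so the sequence automatically lands in the compact cell $\PC(B_i)$, and the degenerate possibility reappears as $\lambda=\lim K_n/L_n=0$. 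Both degenerate cases are excluded by the identical mechanism you describe: subtract an integrally weighted closed surface $S$ carried by the closed sub-branched surface; asphericity forces $\Chi(S)\le 0$, the case $\Chi(S)<0$ contradicts maximality of $\Chi$, and the case $\Chi(S)=0$ contradicts least area. So the two routes are interchangeable, and yours is arguably cleaner in that it avoids introducing $\lambda$ and rescaling by it at the end.

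One sentence of your writeup is wrong, though it is inessential. You cannot ``trim any excess over $P$'' by subtracting a non-negative multiple of a closed invariant weight vector $\bov_0$: by Lemma \ref{NonPos} every closed lamination carried by an aspherical branched surface has $\Chi\le 0$, so $\Chi(B_i(\bov-t\bov_0))=\Chi(B_i(\bov))-t\,\Chi(B_i(\bov_0))\ge\Chi(B_i(\bov))$ — subtraction can only raise the Euler characteristic (and may also leave the cone). What you actually prove is $\Chi(B_i(\bov))\ge P$, since replacing $F_n$ by maximal-$\Chi$ surfaces may strictly increase the limsup. This is not a defect relative to the paper: its own proof has exactly the same feature (it explicitly notes the replacement ``may have increased'' the limsup and then proves equality only for the improved sequence), and every application of the lemma in the paper uses only the inequality $\Chi(B_i(\bov))\ge P$. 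Just delete the trimming remark and state the conclusion as an inequality.
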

 
 \begin{proof}  We begin by replacing $F_n$ by a surface of minimal area (with respect to the triangulation) among all maximal-$\Chi$ normal surfaces with the same boundary.  In particular, we discard all closed components of $F_n$.
 For the new sequence $\{F_n\}$ we may have increased $\limsup \Chi(F_n)/K_n=P$.  By passing to a subsequence we can assume that $F_n$ satisfies $\lim_{n\to \infty}\Chi(F_n)/K_n=P$.  By passing to a subsequence again, we may suppose that that every $F_n$ is fully carried by one of the normal $\tau$-select branched surfaces, say $B_i$, so $F_n=B_i(\boz_n)$, where $\boz_n$ has integer entries.   Now we normalize $\boz_n$ by dividing by the sum $L_n$ of the entries of $\boz_n$, obtaining $\bou_n=\boz_n/L_n\in \PC(B_i)$.  Since $ \PC(B_i)$ is compact, by passing to a subsequence, we can assume $\bou_n$ converges, $\bou_n \to \bou\in \PC(B_i)$.   
 
 We let $p$ be the number of sectors of $\tau$.   Since $K_n$ is the sum of the the entries of $\boy_n=\bdry \boz_n$ (some of the entries of $\boz_n$, with at most $p$ repetitions each), we have $K_n\le pL_n$, hence $0< K_n/L_n\le p$.  (A sector of $B_i$ may intersect $\tau$ in more than one, but not more than $p$, sectors of $\tau$.)  Passing to a subsequence again, we arrange that $K_n/L_n$ converges, $K_n/L_n\to \lambda$, where $0\le\lambda\le p$.  Suppose for now that $\lambda\ne 0$.   Restricting to the boundary, $\bdry\boz_n/K_n\to \bow$, so $\bdry\boz_n/L_n=(\bdry\boz_n/K_n)(K_n/L_n)\to \lambda\bow$.  Then since $\bdry\boz_n/L_n\to \lambda\bow$, $\bdry\boz_n/(\lambda L_n)\to\bow$.   In this case we can easily conclude that  $\Chi(B_i(\bou/\lambda))\ge \limsup \Chi(F_n/K_n)$ using the following equations:

$$\Chi(B_i(\boz_n))= \Chi (F_n)$$
$$\Chi\left(B_i\left(\frac {\boz_n}{K_n}\frac{K_n}{L_n}\right)\right)= \left(\frac {K_n}{L_n}\right)\Chi(F_n)/K_n.$$
\noindent  Taking limits as $n\to \infty$ we get:
$$\Chi(B_i(\bou))= \lambda  \lim \Chi(F_n/K_n) $$
$$\Chi(B_i(\bou/\lambda))=  \lim \Chi(F_n/K_n)$$

\noindent Further, $\bdry B_i(\bou/\lambda)=\tau(\bow)$, so taking $\bov=\bou/\lambda$, we have $\bdry B(\bov)=\tau(\bow)$.  This shows that $ \lim \Chi(F_n/K_n)=\Chi(B(\bov))$ and $\Chi(B(\bov))$ is finite in this case.

It remains to rule out the possibility that $\lambda=0$.   If $\lambda=0$,  $\bdry\boz_n/L_n=(\bdry\boz_n/K_n)(K_n/L_n)\to \lambda \bow=0$, which means $B_i(\bou)$ is a closed lamination.   Eliminating sectors of $B_i$ assigned 0 entries by $\bou$ we obtain an invariant weight vector $\bou'$ on a closed sub- branched surface $B_i'$ of $B_i$.   Since $\C(B_i')$ is non-empty, $B_i'$ carries a closed surface $S=B_i(\boz)$ for some invariant weight vector $\boz$ with integer entries.  The weights induced by $F_n'$ on $B_i'$ must approach infinity as $n\to\infty$, otherwise the normalized weights $\boz_n/L_n$ on $\tau$  cannot approach $0$.    Thus for sufficiently large $n$ each entry of $\boz$ is smaller than the corresponding entry of $\boz_n$.   In other words, the  $j$-th entry $z_j$ of $\boz$ is less than the $j$-th entry $z_{nj}$ of $\boz_n$ (for the fixed large $n$).  This means $B_i(\boz_n-\boz)$ is a surface $G$ with $\bdry F_n'=\bdry G$.   Since $B_i$ does not carry spheres, $\Chi(S)\le 0$.   If $\Chi(S)<0$, then $\Chi(G)>\Chi(F_n')$ and $F_n'$ is not a maximal-$\Chi$ surface, a contradiction.   If $\Chi(S)=0$, then   $\Chi(G)=\Chi(F_n')$ but $\gamma(G)=\gamma(F_n')-\gamma(S)<\gamma(F_n')$, a contradiction to our choice of $F_n'$ as a surface of minimal area among all maximal-$\Chi$ normal surfaces with the same boundary as $F_n$. 
 \end{proof}

\begin{defn}\label{TautDef}  In this definition we do not require that $M$ be irreducible.  Suppose $\tau\embed \bdry M$ is an oriented train track and $\tau(\bow)$ represents a peripheral lamination link $(L,\mu)\embed \bdry M$ which bounds some 2-dimensional Seifert lamination.  If $B\embed M$ is a properly embedded, oriented branched surface in $M$, we say $B$ is {\it aspherical} if it carries no sphere.   We define 
$$X((L,\mu)):=\sup \{\Chi (B({\bov})): B \text{ oriented, aspherical and } \bdry B(\bov)=(L,\mu)\}.$$

We define a function $X$ on $\CB(\tau)$ as follows:  $$X(\bow)=X(\tau({\bow})):=\sup \{\Chi (B({\bov})): \  \bdry B(\bov)=\tau(\bow) \hbox{ and } B\embed M \hbox{ is aspherical } \}.$$
 
A Seifert lamination $B(\bov)$ for $\tau(\bow)$ in $\bdry M$ is {\it maximal-$\Chi$} if $\Chi(B(\bov))=X(\tau(\bow))$ and $B$ is aspherical.
Now suppose $C=\tau(\bow)$ is an oriented curve system.   If $\bdry F=C$ we say $F$ is a {\it taut surface} or {\it taut Seifert surface} if it has no closed components and $\Chi(F)=X(\tau(\bow))$.
\end{defn}

\begin{remarks}  (1) The supremum in the above definition of $X$ is finite, by Lemma \ref{TechLemma}.

(2) The difference between a maximal-$\Chi$ Seifert surface $F$ for a curve system $C\embed \bdry M$ and a taut Seifert surface is that the former is maximal-$\Chi$ among Seifert surfaces, while the latter is maximal-$\Chi$ among {\it measured Seifert laminations}  when the boundary is viewed as a lamination link.
\end{remarks}

The following example shows that an maximal-$\Chi$ Seifert surface may not be taut, i.e. not maximal-$\Chi$ when viewed as a Seifert lamination.   This means that there is an example of a maximal-$\Chi$ Seifert surface $F$ such that $X(\bdry F)>\Chi(F)$; or, in other words, we can replace $F$ by a ``better" measured lamination $B(\bov)$, where $\bdry B(\bov)=\bdry F$ as a measured lamination, $B$ is aspherical, and $\Chi(B(\bov))>\Chi(F)$.

 \begin{ex} Let $V$ be a solid torus, $H$ a sphere with 3 holes, $G$ a genus two surface with one hole.  Let $\alpha$ be an oriented longitude curve in $\bdry V$.   Let $M$ be a 3-manifold constructed from $V$, $G\times I$ and $H\times I$ as follows.  Suppose $\alpha_i$, $i=0,1,2,3$, are disjoint, embedded, oriented closed curves in $\bdry V$, each isotopic to and disjoint from $\alpha$.   Let $A_i$, $i=0,1,2,3$, be disjoint product regular neighborhoods of $\alpha_i$, $i=0,1,2,3$ which are also disjoint from $\alpha$.   We let $N=N(\alpha)$ be another annulus representing a fibered neighborhood of the train track $\alpha\embed \bdry V$, where $N$ is disjoint from the annuli $A_i$.  Now identify $\bdry G\times I$ with $A_0$ and identify components of $\bdry H\times I$ with $A_i$, $i=1,2,3$.   For attaching maps on $\bdry H\times I$, first choose an orientation of $H$ and perform the identifications such that the attaching curves have consistent orientations in $\bdry V$.  The result is an orientable 3-manifold containing disjointly embedded annuli $A_i$, $i=0,1,2,3$, such that cutting on all the annuli yields the union of $V$, $H\times I$ and $G\times I$.  We regard $\alpha$ as a train track in $\bdry M$ and $N=N(\alpha)$ as a fibered neighborhood of $\alpha$, and we also regard $\alpha$ as a curve system carried by $\alpha$.  We claim that an oriented surface $G\times \{1/2\}$ extended by an annulus in $V$ from $\alpha$ to $\bdry G$  yields a maximal-$\Chi$ Seifert surface for $\alpha$.   Abusing notation, let us denote as $G$ this extended surface.   To show that $G$ is maximal-$\Chi$, we suppose $F$ is an maximal-$\Chi$ surface bounded by $\alpha$ and show that $F$ is isotopic to $G$.   We know $F$ is incompressible, so we begin by simplifying intersections of $F$ with $A=\cup_i A_i$.   After suitable isotopy, we can assume that intersection curves of $F\cap A$ are essential closed curves.   Next, it is not difficult to show that $F\cap (G\times I)$, and $F\cap (H\times I)$ are incompressible and therefore horizontal, i.e. isotopic to a surface transverse to interval fibers of the products.  We analyze $F\cap V$, which is an incompressible surface in $V$ with boundary equal to $\alpha$ union some essential curves in $A\subset\bdry V$.   We conclude that  $F\cap V$ must consist of annuli.  In particular, there is one annulus of $F\cap V$ containing $\alpha$.   
 
We choose orientations for $G$ and $H$, then we let $g$ be the algebraic number of connected horizontal surfaces (homeomorphic to $G$) in $F\cap (G\times I)$ (so that horizontal surfaces with opposite orientations cancel.)  Similarly we let $h$ be the algebraic number of connected horizontal surfaces in $F\cap (H\times I)$.  On the one hand, regarding $F$ as a relative cycle in $H_2(M,V)$, we have $\bdry[F]=[\alpha]$.  On the other hand, $g[G]+h[H]$ represents the same relative cycle and $\bdry(g[G]+h[H])=g[\alpha]+3h[\alpha]$, so we must have $h=0$, $g=1$.   If there are any cancellations of horizontal surfaces, clearly $F$ does not have maximal $\Chi$, so we conclude $F$ does not intersect $H\times I$ and intersects $G\times I$ in a single connected horizontal surface homeomorphic to $G$.   We conclude $F$ is isotopic to $G$.
  
We now claim $G$ is not taut.   To show it is not taut, we observe that there is an oriented surface surface $S$ consisting of $H\times \{1/2\}$ extended by 3 annuli in $V$, each with one boundary transverse to fibers of $N=N(\alpha)$.   Then the weighted surface $(1/3)S$ (a measured lamination) satisfies $\Chi((1/3)S)=(1/3)\Chi(H)=-1/3>\Chi(G)=-3$.

We have shown $G$ is a maximal-$\Chi$ Seifert surface, but it is not a taut Seifert surface.
 \end{ex}
 
 \begin{ex} \label{NonBound} We modify the previous example to show that a peripheral curve system may bound a Seifert measured lamination, but not a Seifert surface.   In this example we construct $M$ just as before, but we do not attach $G\times I$.   Thus $M$ is constructed from $V$ with $H\times I$ attached as before.   Here the identifications are along three annuli $A_i$, $i=1,2,3$, mutually disjoint and disjoint from $N=N(\alpha)$ and we let $A=\cup_iA_i$.  Suppose $\alpha$ bounds an oriented surface $F$.  We may assume that $F$ is a maximal-$\Chi$ surface.  As above, $F$ intersects $H\times I$ in horizontal surfaces.   If $R=F\cap (H\times I)$,  then because the three boundary components of each component of $R$ are consistently oriented, regarding $[R]$ as a relative cycle in $H_2(M,V)$ we have $\bdry[R]=3k[\alpha]$ in $H_1(V)$ for some integer $k$.  Now, considering $[F]\in H_2(M,V)$, we have $\bdry[ F]=[\alpha]$ in $H_2(V)$.   So $[\alpha]=3k[\alpha]$.  This is a contradiction.
 
 Thus the curve system consisting of the single curve $\alpha$ (carried by the train track $\alpha$) does not bound a Seifert surface, but it does bound a Seifert measured lamination, as in the previous example.
 \end{ex}

A question which immediately comes to mind is whether the supremum in the definition of $X(\bf{w})$ is achieved by a 2-dimensional measured lamination $B(\bf{v})$.   In other words, does a peripheral lamination link which bounds a Seifert lamination bound a maximal-$\Chi$ Seifert lamination?  We shall prove this in the next theorem.  

\begin{thm}\label{AchieveX}  Let $(L,\mu)$ be an oriented measured lamination in $\bdry M$ and suppose it is fully carried by an oriented train track $\tau$ so $(L,\mu)=\tau(\bow)$.   If $(L,\mu)$ bounds a Seifert lamination, i.e. $\bow\in\CB(\tau)$, then there exists a maximal-$\Chi$ Seifert lamination $(\Lambda,\mu)$ with $\bdry (\Lambda,\mu)=\tau(\bow)$ and $\Chi((\Lambda,\mu))=X(\tau(\bow))=\sup\{\Chi(B(\bov):\bdry B(\bov)=\tau(\bow),\ B\ \text{aspherical}\}$.   $X((L,\mu))$ is finite.  The lamination $(\Lambda,\mu)$ is carried (but not necessarily fully carried) by one of the finite collection $\{B_i\}$ of the $\tau$-select branched surfaces of Lemma \ref{NormalLemma}.
\end{thm}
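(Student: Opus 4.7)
The plan is to apply Lemma \ref{TechLemma} to a sequence of integer-weighted Seifert surfaces obtained by rational approximation of a maximizing sequence of Seifert laminations.

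By the definition of $X(\tau(\bow))$, fix aspherical oriented branched surfaces $B^{(n)}\embed M$ with invariant weight vectors $\bov^{(n)}$ such that $\bdry B^{(n)}(\bov^{(n)})=\tau(\bow)$ and $\Chi(B^{(n)}(\bov^{(n)}))\to X(\tau(\bow))$ (allowing $+\infty$ a priori).  Each cone $\C(B^{(n)})$ is cut out by the switch equations and nonnegativity inequalities with integer coefficients, so it is a rational polyhedral cone and rational invariant weight vectors are dense in it.  Pick rational $\tilde\bov^{(n)}\in\C(B^{(n)})$ so close to $\bov^{(n)}$ that $\|\bdry\tilde\bov^{(n)}-\bow\|<1/n$ and $|\Chi(B^{(n)}(\tilde\bov^{(n)}))-\Chi(B^{(n)}(\bov^{(n)}))|<1/n$.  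Choose $k_n\in\naturals$ clearing denominators, and let $F_n:=B^{(n)}(k_n\tilde\bov^{(n)})$, which is an oriented surface in $M$.  Since $B^{(n)}$ is aspherical, $F_n$ has no sphere components, and any remaining (necessarily non-sphere) closed components may be discarded without decreasing $\Chi(F_n)$.

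Write $S:=\sum_i w_i$, $\boy_n:=k_n\bdry\tilde\bov^{(n)}$, $K_n:=\sum_i(\boy_n)_i$, and $\bow_n:=\boy_n/K_n$.  Linearity of $\Chi$ and of the boundary operator in the weights, combined with the approximation bounds, gives $\bow_n\to \bow/S$ and $\Chi(F_n)/K_n\to X(\tau(\bow))/S$ as $n\to\infty$.  Apply Lemma \ref{TechLemma} to $\{F_n\}$: there is a $\tau$-select branched surface $B_i$ from the finite collection of Lemma \ref{NormalLemma} and an invariant weight vector $\bov$ on $B_i$ such that $\bdry B_i(\bov)=\tau(\bow/S)$ and $\Chi(B_i(\bov))$ equals this limit.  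The projective cell $\PC(B_i)$ is compact, so the value returned is finite, and therefore $X(\tau(\bow))$ itself is finite.  Rescaling by $S$, the measured lamination $(\Lambda,\mu):=B_i(S\bov)$ satisfies $\bdry(\Lambda,\mu)=\tau(\bow)$ and $\Chi((\Lambda,\mu))=X(\tau(\bow))$; since the $\tau$-select $B_i$ is aspherical, $(\Lambda,\mu)$ is a Seifert lamination achieving the supremum and is carried by one of the finitely many $\tau$-select branched surfaces, not necessarily fully.

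The main technical point is that $X(\tau(\bow))$ is defined using the exact identity $\bdry B(\bov)=\tau(\bow)$ for a possibly irrational $\bow$, whereas integer-weighted approximating surfaces can only realize rational multiples of $\bow$ on $\tau$; tracking the single rescaling factor $S=\sum_i w_i$ reconciles the projectively-normalized output of Lemma \ref{TechLemma} with the non-projective form of this theorem.
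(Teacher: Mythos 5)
Your proposal is correct and follows essentially the same route as the paper: rationally approximate a maximizing sequence of Seifert laminations inside the rational polyhedral weight cones, clear denominators to obtain integer-weighted surfaces without sphere components, and feed the resulting sequence into Lemma \ref{TechLemma}. The only point the paper makes explicit that you elide is that one must first split each $B^{(n)}$ along leaves of $B^{(n)}(\bov^{(n)})$ so that $\bdry B^{(n)}$ is a splitting of $\tau$; otherwise $\bdry\tilde\bov^{(n)}$ is a weight vector on $\bdry B^{(n)}$ rather than on $\tau$ and the comparison $\|\bdry\tilde\bov^{(n)}-\bow\|<1/n$ does not literally typecheck (your bookkeeping of the factor $S=\sum_i w_i$ in place of the paper's initial normalization of $\bow$ into $\PCB(\tau)$ is a cosmetic difference).
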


\begin{proof} Without loss of generality $\bow\in \PCB(\tau)$, which means the sum of its entries is 1.  There exists a sequence of 2-dimensional laminations $C_n(\bov_n)$ with $\bdry C_n(\bov_n)=(L,\mu)$ and $C_n$ an aspherical branched surface such that $\Chi(C_n(\bov_n))\to X(L,\mu)$.  By splitting $C_n$ sufficiently along leaves of $C_n(\bov_n)$, we can assume $\bdry C_n$ is a splitting of $\tau$, which means any lamination carried by $\bdry C_n$ is carried by $\tau$.  Splitting preserves the aspherical property of $C$, so $C_n$ is still aspherical.  

There is a composition boundary map $b_n$ of linear maps $b_n:\C(C_n)\xrightarrow{\bdry} \C(\bdry C_n)\to\C(\tau)$ which transforms a weight vector on $ \C(C_n)$ to an induced weight vector on $\tau$.  The linear maps have integer coefficients, so they take vectors with rational entries to vectors with rational entries.   We wish to work with $\PC(\tau)$, the weight cell for $\tau$, which is identified with the subspace $\PC(\tau)\subset \C(\tau)$ consisting of invariant weight vectors with the property that the sum of the entries is $1$.   We consider the set of invariant weight vectors in the cone of invariant weight vectors  $\C(C_n(\tau))$ which induce boundaries on $\tau$ which lie in $\PC(\tau)\subset \C(\tau)$.   These form the subspace $b_n\inverse(\PC(\tau))\subset \C(C_n)$.  Now $\C(C_n)$ is defined by equations and inequalities with integer coefficients, so the rational points (points with only rational entries) are dense.  To lie in $b_n\inverse(\PC(\tau))$, an invariant weight vector $\bov$ for $C$ must satisfy one more linear equation, with integer coefficients, in the entries $v_i$ of $\bov$, namely the equation which ensures that the sum of the induced weights on $\tau$ equals 1.  We conclude that rational points are dense in $b_n\inverse(\PC(\tau))$.

Now we choose rational point approximations $\bor_n$ of $\bov_n$ in $b_n\inverse(\PC(\tau))$ sufficiently close to ensure that $b_n(\bor_n)\to \bow$ and $\Chi(C_n(\bor_n))\to X(\bow)$.   This is possible because both $b_n$ and $\Chi$ are linear on $b_n\inverse(\PC(\tau))$.

Clearly $C_n(\bov_n)$ is a rational weighted surface; in other words, there exist integers $K_n$ so that $C_n(K_n\bov_n)=F_n$ is a surface without sphere components and $\bdry F_n$ is carried by $\tau$, say $\bdry F_n=\tau(\boy_n)$, where $\boy_n$ has integer entries.  Now $\Chi(F_n)/K_n=\Chi(C_n(\bov_n))\to X(\tau(\bow))$. Further, dividing $\boy_n$ by $K_n$ must yield $b_n(\bor_n)$, which is an element of $\PC(\tau)$.

Applying Lemma \ref{TechLemma}, we conclude that there is a $\tau$-select branched surface $B_i$ in the collection guaranteed by Lemma \ref{NormalLemma} and a measured lamination $B_i(\bov)$ satisfying $\Chi(B_i(\bov))\ge X(\bow)$.
\end{proof}

To help show that maximal-$\Chi$ Seifert laminations whose boundaries are carried by an oriented train track $\tau$ are neatly organized by branched surfaces, we prove the following lemmas.

\begin{lemma} \label{SumLemma} Suppose $\tau\embed \bdry M$ is an oriented train track.  Let $F_1,F_2,\ldots F_m$ be maximal-$\Chi$ surfaces embedded in $M$ with $\bdry F_i$ carried by $\tau$, $\bdry F_i=\tau (\boy_i) $, and let $\boy=\sum_i\boy_i$.   Then there exists a maximal-$\Chi$ surface $F$ with $\bdry F=\tau(\boy)$, satisfying $\Chi(F)\ge \sum_i \Chi(F_i)$.   
\end{lemma}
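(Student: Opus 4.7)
I would prove this via the oriented double-curve sum, in the style of Thurston's argument for convexity of his norm. The strategy is to construct an oriented surface $G\embed M$ with $\bdry G=\tau(\boy)$ and $\Chi(G)\ge \sum_i\Chi(F_i)$; then any maximal-$\Chi$ surface $F$ with $\bdry F=\tau(\boy)$ automatically satisfies $\Chi(F)\ge \Chi(G)\ge \sum_i\Chi(F_i)$.

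First I would isotope the $F_i$'s inside a collar of $\bdry M$ so that the curves $\bdry F_i$ lie in disjoint sub-bands of $V(\tau)$ and together realize the curve system $\tau(\boy)=\tau(\sum_i\boy_i)$.  Since each $F_i$ is maximal-$\Chi$, it is incompressible, as noted earlier in the paper. After putting the $F_i$'s in general position in $\intr(M)$, the standard innermost-disk argument lets me isotope away any intersection curve that is inessential in some $F_j$; by incompressibility of the other surface, each such move reduces the number of intersection circles. Afterwards every circle of $F_i\cap F_j$ is essential in both containing surfaces. I then perform the oriented double-curve sum: at each intersection circle I cut the two surfaces apart and reglue in the unique way compatible with the given orientations. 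This yields an embedded oriented surface $G$ with $\bdry G=\sqcup_i \bdry F_i=\tau(\boy)$; because circles have Euler characteristic $0$, the usual Mayer--Vietoris computation gives $\Chi(G)=\sum_i\Chi(F_i)$.

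The main obstacle is ruling out sphere components of $G$, since such components contribute $+2$ to $\Chi$ and would be discarded when passing to a surface with no closed components. Suppose $S\subset G$ is a sphere component. Then $S$ decomposes as a union of planar pieces $P_i\subset F_i$ joined along intersection circles. No connected component of any $P_i$ is closed (because $F_i$ has no closed components), and no connected component is a disk (because its boundary circle would then be inessential in $F_i$, contradicting the minimization of intersections). Hence every component of every $P_i$ has $\Chi\le 0$, so $\sum_i\Chi(P_i)\le 0$. But gluing along circles is $\Chi$-additive, so $\sum_i\Chi(P_i)=\Chi(S)=2$, a contradiction. Therefore $G$ has no sphere components.

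Finally, let $G'$ be obtained from $G$ by discarding all closed components.  Each discarded component has $\Chi\le 0$ by the previous paragraph, so $\Chi(G')\ge \Chi(G)=\sum_i\Chi(F_i)$, while $\bdry G'=\tau(\boy)$ and $G'$ has no closed components. Taking $F$ to be any maximal-$\Chi$ surface with $\bdry F=\tau(\boy)$ then gives $\Chi(F)\ge \Chi(G')\ge \sum_i\Chi(F_i)$, as required.
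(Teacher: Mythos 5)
Your proposal is correct and follows essentially the same route as the paper: remove inessential intersection circles using incompressibility of the maximal-$\Chi$ surfaces, perform the oriented cut-and-paste, and discard closed (non-sphere) components. The only differences are cosmetic — the paper combines the surfaces pairwise by induction on $m$ (which sidesteps triple points), whereas you do the oriented sum all at once, and you spell out the no-sphere argument that the paper asserts ``by construction.''
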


\begin{proof}  We use induction.  Consider the case $m=2$.  We have $\bdry F_1$ and $\bdry F_2$ carried by $\tau$, so we assume that $\bdry F_1$ and $\bdry F_2$ are embedded in $N(\tau)$ transverse to fibers, with $\bdry F_i=\tau(\boy_i)$, $i=1,2$.    We also assume $F_1$ and $F_2$ are transverse.   If $F_1$ and $F_2$ intersect on a closed curve which bounds a disk on one of the surfaces, we suppose without loss of generality that $\alpha$ is such a curve innermost on $F_1$, bounding a disk $E_1$ in $F_1$.   By the incompressibility of $F_2$, $\alpha$ also bounds a disk $E_2$ in $F_2$.   We can eliminate at least one curve of intersection between $F_1$ and $F_2$ by replacing $E_2\subset F_2$ by $E_1$ and isotoping a little more to push $F_2$ away from $E_2$.  After removing all closed curves of intersection trivial in $F_1$ or $F_2$, we perform oriented cut-and-paste on $F_1\cup F_2$ on remaining curves of intersection to obtain $G$.  Note that $\bdry G=\tau(\boy_1+\boy_2)$ and $\Chi(G)=\Chi(F_1)+\Chi(F_2)$.   The cut-and-paste operation cannot introduce spheres by construction; if $G$ contains other closed surfaces (with $\Chi\le 0$), we can discard them to obtain an oriented surface $F$ with the same boundary, and $\Chi(F)\ge \Chi(G)$.  If $F$ is not maximal-$\Chi$, we can replace it with a surface with $\Chi(F)$ even larger.  

Finishing the proof by induction is now straightforward.   Assume  we can replace $F_1\cup F_2,\cdots\cup F_{m-1}$ by a surface $G$ with $\Chi (G)\ge \sum_{i=1}^{m-1} \Chi(F_i)$, and with $\bdry G=\tau(\sum_{i=1}^{m-1}\boy_i)$.  Then we can apply the case $m=2$ to the two surfaces $G$ and $F_m$ to prove  our lemma for any $m$.
\end{proof}

\begin{lemma} \label{NonPos}  Suppose $(B,\bdry B) \embed (M,\bdry M)$ is an aspherical branched surface.   If $B(\bod)$ is a closed lamination, then $\Chi(B(\bod))\le 0$.
\end{lemma}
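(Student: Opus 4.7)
The plan is to reduce the statement to the case of an integer weight vector, where the lamination becomes a genuine closed surface, and then invoke the asphericity hypothesis directly. The key observation is that, by the definition at the end of Section \ref{Preliminaries}, the Euler characteristic functional
\[
\Chi\bigl(B(\bod)\bigr)=\sum_i d_i\,\Chi_g(Z_i)
\]
is a linear function on $\C(B)$, so it is continuous and commutes with rational approximation.

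First, I would interpret the hypothesis ``$B(\bod)$ is a closed lamination'' as saying that the induced weight vector on the boundary train track $\bdry B\subset\bdry M$ vanishes, i.e.\ $\bdry\bod=0$. The subspace
\[
\C_0(B)=\{\bod\in\C(B):\bdry\bod=0\}
\]
is cut out of $\C(B)$ by finitely many linear equations with integer coefficients (the switch equations for $B$ together with the restriction equations setting the entries on boundary sectors to $0$), so it is a rational polyhedral cone. In particular, rational points are dense in $\C_0(B)$, so I may choose a sequence $\bod_n\in\C_0(B)$ of rational weight vectors with $\bod_n\to\bod$.

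Next, for each $n$, pick an integer $K_n>0$ such that $K_n\bod_n$ has all integer entries. Then $B(K_n\bod_n)$ is a closed embedded surface $F_n\embed M$ (closed because $\bdry(K_n\bod_n)=0$). Since $B$ is aspherical, no component of $F_n$ is a sphere; as $F_n$ has no boundary, every component has Euler characteristic at most $0$, so $\Chi(F_n)\le 0$. By linearity,
\[
\Chi\bigl(B(\bod_n)\bigr)=\frac{1}{K_n}\Chi(F_n)\le 0.
\]

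Finally, applying continuity (really, linearity) of $\Chi$ on $\C(B)$ and letting $n\to\infty$, I conclude $\Chi(B(\bod))\le 0$. The argument has no genuine obstacle; the only point requiring care is verifying that the ``closed'' condition on the lamination translates into the linear condition $\bdry\bod=0$, so that rational approximations can be chosen within $\C_0(B)$ and not merely within $\C(B)$, and that the integer scalings $B(K_n\bod_n)$ really are closed surfaces to which the asphericity hypothesis applies directly.
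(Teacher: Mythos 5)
Your proof is correct and is essentially the paper's argument: both reduce to rational weight vectors inside the subcone of closed (boundaryless) invariant weights, observe that the resulting integer-weighted closed surfaces have no sphere components by asphericity and hence nonpositive Euler characteristic, and conclude by linearity of $\Chi$ on $\C(B)$. The only cosmetic difference is that the paper writes $\bod$ as an exact convex combination of the rational vertices of $\PC(B)$ (noting that only closed vertices can appear with positive coefficient), whereas you approximate by a dense sequence of rational points in $\C_0(B)$ and pass to the limit.
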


\begin{proof}   Let $\bou_0,\bou_1,\ldots,\bou_k$ be the vertices of the convex polytope $\PC(B)$.  Then $\bod$ is a linear combination of these vertices, $\bod=\sum t_i\bou_i$, where $t_i\ge 0$ and $t_i>0$ only if $B(\bou_i)$ is closed.   Each $\bou_i$ has only rational entries, so $B(\bou_i)$ represents a weighted closed surface, which cannot include weighted spheres.   Therefore, if $t_i>0$, $\Chi(B(\bou_i))\le 0$, and it follows that $\Chi(B(\bod_i))=\sum t_i\Chi(B(\bou_i))\le0$.
\end{proof}

\begin{lemma} \label{TautLemma} Let $B\embed M$ be an aspherical oriented branched surface, $\bdry B=\tau\embed \bdry M$.   Suppose $B(\bou)$ is a maximal-$\Chi$ Seifert lamination and $\bou$ has strictly positive entries.  Then any measured lamination carried by $B$ is a maximal-$\Chi$ Seifert lamination or a closed lamination of Euler characteristic 0.
\end{lemma}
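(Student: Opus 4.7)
My plan is to split the proof into two cases according to whether $\bdry\bov = 0$. In the closed case ($\bdry\bov = 0$), the strict positivity of $\bou$ permits symmetric perturbation: for all sufficiently small $t > 0$, both $\bou + t\bov$ and $\bou - t\bov$ are invariant weight vectors on $B$ with strictly positive entries, and each has boundary $\bdry\bou$. Hence $B(\bou \pm t\bov)$ are both Seifert laminations for $\tau(\bdry\bou)$ carried by aspherical $B$, so the maximality of $B(\bou)$ yields $\Chi(B(\bou)) \pm t\,\Chi(B(\bov)) \le \Chi(B(\bou))$, forcing $\Chi(B(\bov)) = 0$.

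In the non-closed case ($\bdry\bov \ne 0$), the lamination $B(\bov)$ is automatically a Seifert lamination for $\tau(\bdry\bov)$, giving $X(\bdry\bov) \ge \Chi(B(\bov))$ automatically, and I would argue equality by contradiction. If $X(\bdry\bov) > \Chi(B(\bov))$, then by Theorem \ref{AchieveX} there exist an aspherical branched surface $B'$ and invariant weight vector $\bow$ with $\bdry B'(\bow) = \tau(\bdry\bov)$ and $\Chi(B'(\bow)) > \Chi(B(\bov))$. For small $t > 0$, $\bou - t\bov$ still has positive entries, so $B(\bou - t\bov)$ is a Seifert lamination for $\tau(\bdry(\bou - t\bov))$ carried by aspherical $B$, and together with $B'(t\bow)$ the two pieces have combined boundary $\tau(\bdry\bou)$ and combined Euler characteristic
\[
\Chi(B(\bou)) + t\left(\Chi(B'(\bow)) - \Chi(B(\bov))\right) > X(\bdry\bou).
\]

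The principal technical obstacle is to realize this combined configuration as a Seifert lamination carried by an aspherical branched surface, contradicting the value $X(\bdry\bou) = \Chi(B(\bou))$. I would handle this by rational approximation together with Lemma \ref{SumLemma}. Choose rational invariant weight vectors approximating $\bou - t\bov$ and $t\bow$ with a common denominator $K$, producing integer-weighted surfaces $F_1, F_2$ carried by the aspherical $B, B'$ respectively; asphericity guarantees that neither $F_i$ has a sphere component. After discarding any non-sphere closed components (which does not decrease $\Chi$) and replacing each surface by an incompressible maximal-$\Chi$ surface with the same boundary, Lemma \ref{SumLemma} produces a maximal-$\Chi$ surface $H$ with $\bdry H = \bdry F_1 + \bdry F_2$ and $\Chi(H) \ge \Chi(F_1) + \Chi(F_2)$. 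Refining the approximation and invoking Lemma \ref{TechLemma} on the resulting sequence $\{H_n\}$ yields a Seifert lamination for $\tau(\bdry\bou)$ carried by one of the $\tau$-select branched surfaces of Lemma \ref{NormalLemma}, with Euler characteristic at least $\Chi(B(\bou)) + t(\Chi(B'(\bow)) - \Chi(B(\bov))) > X(\bdry\bou)$, giving the sought contradiction. Therefore $\Chi(B(\bov)) = X(\bdry\bov)$, and $B(\bov)$ is a maximal-$\Chi$ Seifert lamination.
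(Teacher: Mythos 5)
Your proposal is correct, and it reaches the conclusion by the same core mechanism as the paper --- the ``swap'' trick realized through rational approximation, Lemma \ref{SumLemma}, and Lemma \ref{TechLemma} --- but with a genuinely different and more streamlined organization. The paper proceeds in three stages: it first proves maximality of $B(\bou_i)$ at the vertices of $\PC(B)$ by writing $\bou$ as a convex combination $\sum_j t_j\bou_j$ and swapping out the $\bou_0$-piece; it then extends to arbitrary rational points; and it handles irrational $\bod$ by a separate limiting argument that splits the competing branched surface $C$ so that $\bdry C=\bdry \hat B$, glues to form a closed branched surface $A$ in the double of $M$, and approximates by rational weight vectors on $A$. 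You instead perturb linearly along the segment $\bou-t\bov$, which is legitimate precisely because $\bou$ has strictly positive entries, and you approximate $\bou-t\bov$ and $t\bow$ by rational points in $\C(B)$ and $\C(B')$ \emph{independently}, relying on Lemma \ref{TechLemma}'s tolerance of boundaries that only converge to $\tau(\bdry\bou)$ rather than matching it at each finite stage. This handles rational and irrational $\bov$ uniformly and dispenses with the doubling construction entirely. Likewise, in the closed case your symmetric perturbation $\bou\pm t\bov$ extracts both inequalities $\Chi(B(\bov))\le 0$ and $\Chi(B(\bov))\ge 0$ from maximality alone, whereas the paper uses the one-sided perturbation together with Lemma \ref{NonPos} (asphericity) for the upper bound; both are sound, and yours is marginally more economical. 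The trade-off is that the paper's vertex decomposition makes the rationality bookkeeping very explicit, while your version leans a bit harder on the (correct but unstated) facts that rational points are dense in $\C(B)$ and $\C(B')$, that $\bdry F_2^{(n)}$ is carried by $\tau$ after splitting $B'$, and on the renormalization from the common denominator $K_n$ to the boundary-weight sum used in Lemma \ref{TechLemma}; these are exactly the details the paper itself works through, so nothing essential is missing.
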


\begin{proof}  The idea of this is easy, but the details are technical.    The idea:  if $B(\bou)$ is maximal-$\Chi$ and $B(\bou_0)$ is not maximal-$\Chi$, then there exists $ B'(\bou_0')$  with $\bdry B'(\bou_0')=\bdry B(\bou_0)$ with larger $\Chi$. Then  ``remove a multiple of $B(\bou_0)$ from $B(\bou)$ and replace it with the same multiple of $B'(\bou_0')$, to obtain a contradiction to the assumption that $B(\bou)$ has maximal $\Chi$."

 Let $\bou_0,\bou_1,\ldots,\bou_k$ be the vertices of the convex polytope $\PC(B)$.  These vertices must have all rational entries.  By scaling, we can assume that the sum of the entries of $\bou$ is 1, which means $\bou\in \PC(B)$.  Our first goal is to show that for each $i=0,1,2,\ldots, k$ if $B(\bou_i)$ is not a closed lamination, then it is maximal-$\Chi$.  (If $B(\bou_i)$ is closed, ``maximal-$\Chi$" is not defined.)  Suppose $B(\bou_0)$ is not closed and not maximal-$\Chi$.  Then there exists an aspherical branched surface $B'$ with $\bdry B'$ a sub- train track of $\tau$ and there exists an invariant weight vector $\bou_0'$ for $B'$ such that $\bdry \bou_0'=\bdry \bou_0$ and $\Chi(B'(\bou_0'))>\Chi(B( \bou_0))$.   Since $\Chi(B'(\cdot))$ is linear on $\C(B')$ we can assume all entries of $\bou_0'$ are rational.  Now $\bou$ can be expressed as a convex combination of the $\bou_i$'s, $\bou=\sum_{j=0}^kt_j\bou_j$.  Without loss of generality, $t_j>0$.   Approximating $t_j$'s by rational numbers, for each $j$ we can find a sequence $\{r_{nj}\}$ of rational numbers such that $r_{nj}\to t_j$, $\sum_{j=0}^kr_{nj}\bou_j\to \bou$ and $\sum_{j=0}^kr_{nj}=1$.  Then the linearity of the Euler characteristic function implies $\Chi(B(\sum_{j=0}^kr_{nj}\bou_j))\to \Chi(B(\bou))$.   For a fixed $n$, scaling all weight vectors by an appropriate (large) integer $L_n$ ensures that $L_nr_{nj}\bou_j$, $j=0,1,\ldots k$, and $L_nr_n \bou'_0$ all have only integer entries.  This means $B(\sum_{j=0}^kL_nr_{nj}\bou_j)$ is a surface $G_n$, while $B(L_nr_{nj}\bou_j)$ is a surface $G_{nj}$ and $B'(L_nr_{n0}\bou_0')$ is a surface $G_{n0}'$.  If we let $\bos_n=\sum_{j=0}^kL_nr_{nj}\bou_j$, then $G_n=B(\bos_n)$, and $\bos_n/L_n\in \PC(B)$.   This means normalizing the weight vector $\bos_n$ induced by $G_n$ on $B$ by dividing by the sum of its entries yields an element $\bos_n/L_n\in \PC(B)$.

Now we have $$\Chi(B(\bou))=\lim_{n\to \infty}\sum_{j=0}^k\Chi(B(r_{nj}\bou_j))=\lim_{n\to \infty}\sum_{j=0}^k\frac 1{L_n}\Chi(B(L_nr_{nj}\bou_j))=$$

$$=\lim_{n\to \infty}\sum_{j=0}^k\frac 1{L_n}\Chi(G_{nj})=\lim_{n\to \infty}\frac 1{L_n}\Chi(G_n).$$

\noindent  We will eventually arrive at a contradiction by replacing $G_{n0}$ in the above by $G_{n0}'$ as follows.   Since $\Chi(B'(\bou_0'))>\Chi(B( \bou_0))$, say $\Chi(B'(\bou_0'))-\Chi(B( \bou_0))=\epsilon>0$, we also have 
$$\Chi(G_{n0}')-\Chi(G_{n0})=\Chi(B'(L_nr_{n0}\bou_{0}'))-\Chi(B(L_nr_{n0}\bou_{0}))=$$
$$=L_nr_{n0}\left[\Chi(B'(\bou_{0}'))-\Chi(B(\bou_{0}))\right]=L_nr_{n0}\epsilon>0.$$
\noindent We apply Lemma \ref{SumLemma} to the surfaces $G_{n0}', G_{n1}, G_{n2},\ldots G_{nk}$.  Using the lemma, by suitable isotopy and oriented cut-and-paste of these surfaces (all of whose boundaries are carried by $\tau$) we obtain a surface $F_n$ with $\Chi(F_n)\ge \Chi(G_{n0}' )+\sum_{i=1}^k\Chi(G_{ni})$.   Then $\Chi(F_n)-\Chi(G_n) \ge \left[\Chi(G_{n0}' )+\sum_{i=1}^k\Chi(G_{ni})\right]-\sum_{i=0}^k\Chi(G_{ni})=\Chi(G_{n0}')-\Chi(G_{n0})=L_nr_{n0}\epsilon$.  The surface $F_n$ has the same boundary as $G_n$, which is fully carried by $\tau$.   

Now we will normalize the surfaces $G_n$ differently by dividing by the the sum $K_n$ of the entries of $\boy_n=\bdry \bos_n$.   After passing to a subsequence, we arrange $\boy_n/K_n=\bdry \bos_n/K_n\to \bow$ for some $\bow\in \PC(\tau)$.   On the other hand, $\bdry \bos_n/L_n\to \bdry \bou$.  Since $\bdry \bos_n/K_n$ and $\bdry \bos_n/L_n$ are in the same projective class, and neither sequence approaches the zero vector, their limits are in the same projective class, and there exists $\lambda>0$ so that $\bow=\lambda\bdry\bou$.  Now suppose $s_n$ is an entry of $\bos_n$ for a sector intersecting a segment of $\tau$ with a nonzero weight $w$ assigned by $\bow$ or $\lambda \bdry \bou$.  Then $\lim s_n/K_n\to  w$ while $ s_n/L_n\to w/\lambda$, from which we conclude $\lambda=\lim L_n/K_n$.

We know $\Chi(F_n)-\Chi(G_n)\ge L_nr_{n0}\epsilon$.   It follows that 
$$\frac{\Chi(F_n)}{L_n}-\frac{\Chi(G_n)}{L_n}\ge r_{n0}\epsilon$$
$$\frac{\Chi(F_n)}{L_n}\ge\frac{\Chi(G_n)}{L_n}+ r_{n0}\epsilon$$

\noindent Multiplying both sides by $L_n/K_n$, 
$$\frac{\Chi(F_n)}{K_n}\ge\frac{L_n}{K_n}\left(\frac{\Chi(G_n)}{L_n}\right)+\frac{L_n}{K_n}r_{n0}\epsilon$$
\noindent  Taking limit superior as $n\to \infty$ we get:
$$\limsup_{n\to \infty}\frac{\Chi(F_n)}{K_n}\ge\lambda \Chi(B(\bou))+\lambda t_0\epsilon$$

Next we apply Lemma \ref{TechLemma} to conclude there is a normal $\tau$-select branched surface $B_i$ and a measured lamination $B_i(\bov)$ with $\bdry \bov=\bow$ and with $\Chi(B_i(\bov))\ge\lambda \Chi(B(\bou))+\lambda t_0\epsilon$.  Then $\Chi(B_i(\bov/\lambda))\ge\Chi(B(\bou))+t_0\epsilon$.   This contradicts the assumption that $B(\bou)$ is a maximal-$\Chi$ Seifert lamination since $\bdry (\bov/\lambda)=\bdry\bou$.

We can use the same argument to prove that for any $\bou_0$ in $\PC(B)$ with all rational entries, $B(\bou_0)$ is maximal-$\Chi$.   In this case, we assume the extremal points of $\PC(B)$ are $\bou_1,\bou_2,\ldots,\bou_k$.    Then we can write $\bou$ as a convex combination of $\bou_0,\bou_1,\bou_2,\ldots,\bou_k$ as before, $\bou=\sum_{i=0}^kt_i\bou_i$ with $t_0>0$, and proceed as before.  

Next, we must show that if $\bod$ is any point in $\PC(B)$ and $B(\bod)$ is not closed, then $B(\bod)$ is maximal-$\Chi$.  Let $\hat B$ be the sub- branched surface of $B$ which fully carries $B(\bod)$, and let $B(\bod)=\hat B(\hat\bod)$ where $\hat \bod$ is the restriction of $\bod$ to $\hat B$.   Suppose $B(\bod)$ is not maximal-$\Chi$.   Then there exists an aspherical branched surface $C\embed M$  and an invariant weight vector $\boh$ (positive entries) for $C$ such that  $\bdry C(\boh)=\bdry B(\bod)$ as a lamination, with $\Chi(C(\boh))>\Chi(B(\bod))=\Chi(\hat B(\hat\bod))$.   This means that by splitting $C(\boh)$ and $\hat B(\hat\bod)$ along leaves, we can replace $C$, $\hat B$, $\boh$ and $\hat \bod$ such that $\bdry C=\bdry\hat B$.  The new branched surfaces $C$ and $\hat B$ remain aspherical.  Identifying $\bdry C\subset C$ with $\bdry \hat B\subset \hat B$ we obtain a closed branched surface $A$.   In fact, we can embed $A$ in the double of the manifold $M$.   Also, we have an invariant weight vector $\boa$ for $A$ whose restriction to $\hat B$ and $C$ is $\hat \bod$ and $\boh$ respectively.  We choose a sequence of weight vectors $\boa_n$ for $A$, each with only rational entries, such that $\boa_n\to \boa$.   By restricting to $\hat B$ and $C$ we get sequences $\bob_n$ and $\boc_n$, each with only rational entries, and with $\bob_n\to\hat \bod$, $\boc_n\to \boh$.  Since $\lim_{n\to\infty}\Chi(C(\boc_n))=\Chi(C(\boh))>\Chi(B(\bod))=\lim_{n\to\infty}\Chi(\hat B(\bob_n))$, we must have $\Chi(C(\boc_N))>\Chi(\hat B(\bob_N))$ for some sufficiently large $N$.  On the other hand, since $\bdry C(\boc_N)=\bdry \hat B(\bob_N)$, it follows that $\hat B(\bob_N)$ is not maximal-$\Chi$.   This contradicts what we have already proved:  that for any rational $\bod_0$, $B(\bod_0)$ is maximal-$\Chi$.

Finally, we must show that if $B(\bod)$ is closed, then $\Chi(B(\bod))=0$.   Since $B$ is aspherical $\Chi(B(\bod))\le 0$, by Lemma \ref{NonPos}.  If $\Chi(\bod)< 0$, then for a sufficiently small $\delta$, $\bou-\delta\bod$ has positive entries, and $\Chi(B(\bou-\delta\bod))=\Chi(B(\bou))-\delta(\Chi(B(\bod))>\Chi(B(\bou))$, whereas $\bdry(\bou-\delta\bod)=\bdry\bou$.  This contradicts the assumption that $B(\bou)$ is a maximal-$\Chi$ Seifert lamination:  $B(\bou-\delta\bod)$ is a Seifert lamination with the same boundary as $B(\bou)$ with larger $\Chi$.
\end{proof}

\begin{defn}  \label{TautBranched} Let $B\embed M$ an oriented branched surface with $\bdry B$ a sub- train track of $\tau$.   Then $B$ is {\it $\tau$-taut} or {\it taut} if it is a sub- branched surface of a $\tau$-select  branched surface and fully carries a maximal-$\Chi$ Seifert lamination.  A Seifert lamination is {\it taut} if it is carried by a taut branched surface.  We do not require $M$ to be irreducible in this definition. \end{defn}

\begin{remark}  Our definition says a Seifert surface, with boundary carried by $\tau$, is taut if it is maximal-$\Chi$ as a lamination.   Since it is then also maximal-$\Chi$ as a surface, it is fully carried by a $\tau$-select branched surface, so it is taut when viewed as a Seifert lamination.  
\end{remark}

Now we can prove a stronger version of Lemma \ref{NormalLemma}:

\begin{thm} \label{TautThm} Suppose $M$ is a compact, orientable, irreducible 3-manifold with boundary and $\tau\embed \bdry M$ is an oriented train track in $\bdry M$.  Then there exists a finite collection $\{T_i:  i=1,2,\ldots m\}$ of $\tau$-taut oriented branched surfaces such that for any peripheral lamination link $\tau(\bow)$ which bounds a Seifert lamination, there exists a $T_i$ which fully carries a taut Seifert lamination for the link.   The leaves of a taut Seifert lamination carried by $T_i$ are $\pi_1$-injective.
\end{thm}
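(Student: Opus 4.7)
The plan is to carve out the collection $\{T_i\}$ from the finite collection $\{B_i\}$ of $\tau$-select branched surfaces supplied by Lemma \ref{NormalLemma}. For each $B_i$ a subset of its sectors determines a sub-branched surface precisely when the induced weights satisfy the switch equations, and since $B_i$ has only finitely many sectors, $B_i$ has only finitely many sub-branched surfaces. I would declare a sub-branched surface $T \subseteq B_i$ to go into the list $\{T_i\}$ iff $T$ fully carries some maximal-$\Chi$ Seifert lamination. This is a finite list; by Definition \ref{TautBranched} each member is automatically $\tau$-taut, because $\bdry T \subseteq \bdry B_i \subseteq \tau$ and $T$ is, by construction, a sub-branched surface of the $\tau$-select branched surface $B_i$.

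Next I would verify that every peripheral bounding link has a taut Seifert lamination fully carried by some $T_i$. Given $\tau(\bow) \in \CB(\tau)$, Theorem \ref{AchieveX} supplies a maximal-$\Chi$ Seifert lamination of the form $B_i(\bov)$ with $\bdry B_i(\bov) = \tau(\bow)$ for some $\tau$-select $B_i$ from Lemma \ref{NormalLemma}. Let $T$ be the sub-branched surface of $B_i$ spanned by those sectors on which $\bov$ is strictly positive; since zero-weight sectors contribute nothing to the switch equations at the branch locus of $B_i$, the restriction $\bov|_T$ is a well-defined invariant weight vector on $T$ with all positive entries. Then $T(\bov|_T) = B_i(\bov)$ is a maximal-$\Chi$ Seifert lamination fully carried by $T$, so $T$ is in the list $\{T_i\}$, and $T(\bov|_T)$ is the taut Seifert lamination we need.

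For the $\pi_1$-injectivity statement, the corollary following Lemma \ref{NormalLemma} says that every measured lamination carried by a $\tau$-select branched surface has $\pi_1$-injective leaves in $M$. Since each $T_i$ is a sub-branched surface of some $\tau$-select $B_i$, any measured lamination carried by $T_i$ is carried by $B_i$ and the conclusion is immediate.

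The main thing to watch is bookkeeping rather than a substantive obstacle: one must check that the putative sub-branched surface $T$ inherits from $B_i$ the properties needed for $\tau$-selectness to pass to the sub-branched-surface level (asphericity and orientation transfer automatically, as any sphere in $T$ or any orientation-inconsistent identification in $T$ would already have been present in $B_i$), and that restriction to sectors of positive weight really produces the same underlying measured lamination. Both facts are direct from the constructions in Section \ref{Preliminaries}, and nothing deeper than Lemma \ref{NormalLemma}, Theorem \ref{AchieveX}, and the $\pi_1$-injectivity corollary is needed.
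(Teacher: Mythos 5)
Your proposal is correct and follows essentially the same route as the paper: invoke Theorem \ref{AchieveX} to get a maximal-$\Chi$ Seifert lamination carried by one of the finitely many $\tau$-select $B_i$, pass to the sub-branched surface of positive-weight sectors (which fully carries it), use finiteness of sub-branched surfaces of each $B_i$, and conclude tautness from Definition \ref{TautBranched} and $\pi_1$-injectivity from the corollary. The only cosmetic difference is that the paper also cites Lemma \ref{TautLemma} to record the stronger fact that \emph{every} Seifert lamination carried by such a $T$ is maximal-$\Chi$, which your argument does not need for the statement as written.
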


\begin{proof}  Given a peripheral lamination link $\tau(\bow)$, by Theorem \ref{AchieveX} there is a $\tau$-select branched surface $B_i$ which carries a maximal-$\Chi$ Seifert lamination $B_i(\bov)$ for $\tau(\bow)$.   It may not be fully carried by $B_i$, but it is fully carried by some sub- branched surface $T$ of $B_i$, say $B_i(\bov)=T(\bou)$.  For fixed $i$ there are just finitely many sub-branched surfaces of $B_i$, so there are only finitely many possibilities for $T$.   Considering the finite set of all $B_i$, there are still only finitely many possibilities for $T$.   Then by Lemma \ref{TautLemma}, every Seifert lamination carried by $T$ is maximal-$\Chi$.   By definition $T$ is taut.   The set $\{T_i\}$ is the finite set of all sub-branched surfaces of $B_i$'s which fully carry maximal-$\Chi$ Seifert laminations.  The leaves of a Seifert lamination carried by one of the taut branched surfaces $T_i$ are $\pi_1$-injective because the Seifert lamination is carried by $\tau$-select branched surface, since $T_i$ is a sub- branched surface of a $\tau$-select branched surface.
\end{proof}

Probably the taut branched surfaces $T_i$ in the above statement can be modified so they are $\tau$-select.   The main difficulty is eliminating disks of contact.
 
 We use the following proposition to deal with peripheral links in reducible 3-manifolds.
 
 \begin{proposition} \label{SumProp}  Suppose the compact, orientable 3-manifold $M$ has a prime decomposition $M=M_1\#M_2\#\cdots \#M_k$.  Suppose that for each $i$ , $\tau_i(\bow_i)$ is a possibly empty peripheral lamination link in $\bdry M_i$.   Let $\tau=\cup_i\tau_i\subset \bdry M$ with invariant weight vector $\bow$ assigning the same weights as the $\bow_i$'s.   Then $\tau(\bow)$ bounds a Seifert lamination in $M$ if and only if $\tau_i(\bow_i)$ bounds a 2-dimensional measured lamination in $M_i$ for each $i$ .  Further, $X(\tau(\bow))=\sum_i X(\tau_i(\bow_i))$ and there is a taut branched surface $B \embed M$ and a taut Seifert lamination  $B(\bov)$ such that $\bdry B(\bov)=\tau(\bow)$.  ($X(\tau_i(\bow_i))$ refers to Seifert laminations in $M_i$.)
 
 \hip
 \noindent There is a finite collection $\{T_i\}$ of taut branched surfaces in $M$ such that if $\tau(\bow)\subset \bdry M$ is a peripheral lamination link which bounds, then it bounds a taut Seifert lamination fully carried by one of the  $T_i$.
 \end{proposition}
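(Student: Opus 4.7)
My plan is to induct on the number of prime factors $k$, reducing to the base case $M = M_1 \# M_2$ with a separating sphere $S$. Write $M_i^* := M_i \setminus B^3$ so that $M$ is obtained by gluing $M_1^* \cup M_2^*$ along $S$, with $\bdry M = \bdry M_1 \sqcup \bdry M_2$ and $\tau_i \subset \bdry M_i$ disjoint. The ``if'' direction and the inequality $X(\tau(\bow)) \geq \sum_i X(\tau_i(\bow_i))$ are immediate by disjoint union: if each $\tau_i(\bow_i)$ bounds an aspherical Seifert lamination $B_i(\bov_i)$ in $M_i$, isotope each $B_i$ into $M_i^*$ and set $B = \sqcup_i B_i$, $\bov = (\bov_i)$. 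Asphericity of $B$ follows because any sphere carried by $B$ lies in a single $B_i$, contradicting asphericity there, and $\Chi(B(\bov)) = \sum_i \Chi(B_i(\bov_i))$.

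For the ``only if'' direction and the reverse inequality, I would proceed via rational approximation. Given an aspherical Seifert lamination $B(\bov)$ for $\tau(\bow)$ in $M$, choose rational $\bov_n \to \bov$ and scale so that $K_n\bov_n$ has integer entries; then $G_n := B(K_n\bov_n)$ is a Seifert surface with no sphere components (by asphericity of $B$), with boundary $\tau(\boy_n)$ for integer $\boy_n$ satisfying $\boy_n/K_n \to \bow$. Replace $G_n$ by a maximal-$\Chi$ surface $F_n$ with $\bdry F_n = \bdry G_n$; then $F_n$ is incompressible, and an innermost-disk argument on the null-homotopic curves of $F_n \cap S$ in $S^2$ permits an isotopy making $F_n$ disjoint from $S$. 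Splitting $F_n = F_{n,1} \cup F_{n,2}$ with $F_{n,i} \subset M_i^*$, each $F_{n,i}$ is itself maximal-$\Chi$ in $M_i$ (any larger-$\Chi$ replacement reassembles into a larger-$\Chi$ surface in $M$). Apply Lemma \ref{NormalLemma} in each irreducible $M_i$ to obtain a finite list of $\tau_i$-select branched surfaces carrying the $F_{n,i}$; pass to subsequences so that for each $i$ every $F_{n,i}$ is carried by a fixed $B_i^*$. Then Lemma \ref{TechLemma} applied in $M_i$ yields a measured lamination $B_i^*(\bov_i^*)$ whose boundary is a positive multiple of $\tau_i(\bow_i)$, which after rescaling gives a Seifert lamination $B_i(\bov_i)$ for $\tau_i(\bow_i)$. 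Summing Euler characteristics gives $\sum_i \Chi(B_i(\bov_i)) \geq \liminf_n \Chi(F_n)/K_n \geq \Chi(B(\bov))$, and taking the supremum over aspherical Seifert laminations $B(\bov)$ in $M$ yields $\sum_i X(\tau_i(\bow_i)) \geq X(\tau(\bow))$, hence equality.

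For the taut Seifert lamination, apply Theorem \ref{TautThm} in each irreducible $M_i$ to produce a $\tau_i$-taut branched surface $T_i$ fully carrying a taut Seifert lamination $T_i(\bov_i)$ with $\Chi(T_i(\bov_i)) = X(\tau_i(\bow_i))$. Set $B = \sqcup_i T_i \subset M$ and $\bov = (\bov_i)$; the equality of $X$'s gives $\Chi(B(\bov)) = X(\tau(\bow))$, so $B(\bov)$ is maximal-$\Chi$ in $M$. To verify $B$ is $\tau$-taut, take the disjoint union $\hat B = \sqcup_i \hat B_i$ of $\tau_i$-select branched surfaces containing each $T_i$ (Definition \ref{TautBranched}) and check conditions (i)--(vi) of Definition \ref{SelectDef}: conditions (i)--(iv) and (vi) transfer directly from each $\hat B_i$, since the obstructing objects (spheres, disks of contact, Reeb branched surfaces) would lie in a single summand. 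The promised finite collection in $M$ is the set of all disjoint unions $\sqcup_i T_{i,j_i}$ as $T_{i,j_i}$ ranges over the finite collection in $M_i$ from Theorem \ref{TautThm}; this is a finite product of finite sets.

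The main obstacle is condition (v) of Definition \ref{SelectDef}: showing that $\bdry_h N(\hat B)$ is incompressible in $M \setminus N(\hat B)$. Given a compressing disk $D$, one must first reduce $D \cap S$, exploiting asphericity of $\hat B$ (so $S$ is not freely realized by $\hat B$) to push $D$ across innermost disks in $D \cap S$; after enough reduction, $D$ can be isotoped into a single $M_i^* \setminus N(\hat B_i)$, contradicting incompressibility of $\bdry_h N(\hat B_i)$ in $M_i$ from the $\tau_i$-select property. A secondary technicality is the scaling in the rational approximation step: the limit $B_i^*(\bov_i^*)$ has boundary a positive scalar multiple of $\tau_i(\bow_i)$, and care is needed to rescale consistently across $i$ so that the reassembled Seifert lamination has boundary exactly $\tau(\bow)$; this is routine by linearity of $\bdry$ and $\Chi$ on $\bov$.
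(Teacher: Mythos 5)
Your proposal is correct in substance and follows essentially the same route as the paper: make the Seifert object disjoint from the system of essential spheres by an innermost-disk reduction that does not decrease $\Chi$, reduce to the irreducible prime summands where Lemma \ref{NormalLemma}, Lemma \ref{TechLemma} and Theorem \ref{TautThm} apply, and reassemble the taut data as disjoint unions; the only real difference is bookkeeping, in that you approximate an arbitrary Seifert lamination by integer-weighted surfaces and invoke Lemma \ref{TechLemma} summand-by-summand, where the paper reduces to a single rational weighted surface (using continuity of $X$) and, for the bounding direction, surgers the lamination itself along innermost measured families of curves on the spheres.

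Two small repairs. First, in a reducible $M$ you cannot in general \emph{isotope} $F_n$ off $S$: an innermost curve of $F_n\cap S$ on $S$ bounds a disk $D\subset S$ and, by incompressibility, a disk $D'\subset F_n$, but the sphere $D\cup D'$ need not bound a ball, so no isotopy pushes $D'$ across $D$. What works, and is what the paper does, is oriented surgery on $F_n$ along $D$ (replace $D'$ by a parallel copy of $D$ and discard the resulting sphere component); this preserves $\bdry F_n$, does not decrease $\Chi$, and reduces the number of intersection curves, which is all your subsequent argument uses. Second, the induction on $k$ is awkward as framed: in the inductive step the second factor $M_2\#\cdots\#M_k$ is reducible, and Lemmas \ref{NormalLemma}, \ref{TechLemma} and Theorem \ref{TautThm} are only available for irreducible manifolds, so the step would need the machinery for the reducible partial sum. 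Since the body of your argument in fact cuts along all the spheres at once and works in every irreducible $M_i$ simultaneously (exactly as the paper does), the induction should simply be dropped. With these adjustments, your verification that the disjoint union of $\tau_i$-select branched surfaces is $\tau$-select (pushing a compressing disk off $S$, which is disjoint from the fibered neighborhood, by innermost-disk swaps in $S$) and your finite collection of taut branched surfaces formed from the summand-wise collections agree with the paper's conclusion.
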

 
 \begin{proof}  Let $S=S_1\cup S_2\cup\cdots \cup S_m$ be a union of essential spheres in $M$, such that cutting $M$ on $S$ and yields a collection of holed irreducible 3-manifolds, $M_1', M_2',\ldots, M_k'$.   Capping the boundary spheres yields  $M_1, M_2,\ldots, M_k$.
 
 One implication is easy, namely if $\tau_i(\bow_i)$ bounds $(\Lambda_i,\mu_i)$ in $M_i$ for each $i$, then $\tau(\bow)$ bounds $\cup_i \Lambda_i$ in $M$, since we can assume $\Lambda_i$ lies in $M_i'\subset M$.  This also shows $X(\tau(\bow))\ge \sum_i X(\tau(\bow_i)$.
 
For the converse, we will show that if $\tau(\bow)$ bounds a Seifert lamination $(\Lambda,\nu)=B(\bov)$ in $M$, then we can replace it with a Seifert lamination $(\Lambda',\nu')$ disjoint from $S$, and also disjoint from all $M_i'$ corresponding to closed summands $M_i$.   We isotope $B$ and $\Lambda$ so it is transverse to $S$.   Then, because $\Lambda$ is measured, the intersection $\Lambda\cap S_i$ with one of the sphere components of $S$ is a finite collection of measured families of isotopic closed curves.  We choose an innermost family on $S_i$, and we surger the lamination just as we would surger a surface.  By splitting $B(\bov)$ appropriately along leaves, we can modify $B(\bov)$ at the same time.   Repeating this process a finite number of times yields a measured lamination $(\Lambda',\nu')=B'(\bov')$ disjoint from $S$, and we let $(\Lambda_i,\nu_i)=B_i(\bov_i)$ be the intersection of $(\Lambda',\nu')=B'(\bov')$ with $M_i'$, which can also be regarded as a lamination in $M_i$.   If $M_i$ is closed, then $\Lambda_i$ is closed and we can discard it.

Next we must show  (assuming $\tau(\bow)$ bounds) that $X(\tau(\bow))\le \sum_i X(\tau(\bow_i))$.  Suppose not.  Then $X(\tau(\bow))>\sum_iX(\tau_i(\bow_i))$.   By the continuity of $X$, there exists an invariant weight vector $\bor$ for $\tau$ with only rational entries such that $X(\tau(\bor))>\sum_iX(\tau_i(\bor_i))$, where $\bor_i$ is the restriction of $\bor$ to $\tau_i$.  Suppose $X(\tau(\bor))=\sum_iX(\tau_i(\bor_i))+\epsilon$, where $\epsilon>0$.   Then we can find a Seifert lamination $B(\bou)$ for $\tau(\bor)$, where $B$ is aspherical and $\bou$ has only rational entries, $\bdry B(\bou)=\tau(\bor)$, and  $\Chi(B(\bou))>X(\tau(\bor))+\epsilon/2$.  So $B(\bou)$ is a weighted surface which we will denote $qF$, where $q$ is a rational weight on $F$.  $F$ has no sphere components, and we can discard all other closed components of $F$ without decreasing $\Chi(F)$.  Now we make $F$ transverse to $S$.   Consider a closed curve of $F\cap S$ innermost on $S$, bounding a disk $H$ in $S$.   Surgering $F$ using $H$ yields a new oriented surface $F$ intersecting $S$ in fewer curves.   If surgery produces a sphere component, we discard it, and $\Chi(F)$ is unchanged.     Otherwise $\Chi(F)$ increases.  Repeating surgeries to eliminate intersections with $S$, we end with a new $F$ disjoint from $S$, and with $\Chi (F)$ no smaller than for the original $F$.     We can write the modified $F$ as $F=\cup_i F_i$ where $F_i\subset M_i'$.   For this $F$, we have $\Chi(qF)>X(\tau(\bor))+\epsilon/2$.   Hence $\sum_i\Chi(qF_i)>X(\tau(\bor))+\epsilon/2$.   But $\Chi(qF_i)\le X(\tau(\bor_i))$, because $qF_i$ is a Seifert lamination for $\tau(\bor_i)$ in $M_i$, carried by an aspherical branched surface (namely $F_i$).   This implies  $X(\tau(\bor))+\epsilon/2<\sum_i\Chi(qF_i)\le \sum_iX(\tau(\bor_i))$, which contradicts our assumption that  $X(\tau(\bor))>\sum_iX(\tau_i(\bor_i))$.

It remains to show that there exist finitely many taut branched surfaces $\{T_i\}$ in $M$ such that for every peripheral lamination $\tau(\bow)$ which bounds, there is a taut Seifert lamination $T_i$ which fully carries a taut Seifert lamination for $\tau(\bow)$.  Since any taut Seifert lamination can be approximated by a taut weighted surface, it is enough to show that there exist finitely many taut $T_i$ such that every taut Seifert surface is carried by some $T_i$.   Given a taut Seifert surface $F$, we can modify $F$ to eliminate intersections with $S$, as above.  Then clearly the new $F$ is a disjoint union of taut Seifert surfaces $F_j$ in $M_j'$, which can also be regarded as taut Seifert surfaces in $M_j$.   Thus $F_j$ is fully carried by one of finitely many taut branched surfaces $T_{ji}$ in $M_j$.  Then $F$ is carried by some taut branched surface $T$ in $M$ such that for each $j$, $T\cap M_j'$ is $T_{ji}$ for some $i$.   Thus there are finitely many possibilities for $T$.
\end{proof}

Proposition \ref{SumProp} makes it easier to prove facts about the function $X$ on $\CB(\tau)$.   Since $X$ is continuous, proving an equation or inequality involving $X$ for rational points (or weighted surfaces) is sufficient to prove it for all points of  $\CB(\tau)$ using the following corollary, which could be regarded as a stronger version of Lemma \ref{TechLemma}.

\begin{corollary} \label{ApproxCor}  Suppose $M$ is a compact, oriented 3-manifold.   If $\tau\embed \bdry M$ is an oriented train track and $\tau(\bou)$ represents a peripheral lamination link which bounds a Seifert lamination, then $X(\tau(\bou))=P$ if and only if there exists a sequence of taut weighted properly embedded surfaces $q_nF_n$ with $\bdry F_n$ carried by $\tau$,  $q_n\bdry F_n=\tau(\bou_n)$, $\bou_n\to \bou$, and $q_n\Chi(F_n)\to P$.  The weights $q_n$ can be chosen to be rational.
\end{corollary}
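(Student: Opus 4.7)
The plan is to prove the two directions separately, with the nontrivial content concentrated in the forward direction, where the machinery of taut branched surfaces developed earlier does most of the work.

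For the easier reverse direction, I would suppose we are given such a sequence $\{q_nF_n\}$. Because each $q_nF_n$ is a taut weighted surface, $q_n\Chi(F_n) = \Chi(q_nF_n) = X(\tau(\bou_n))$. Since $M$ has a prime decomposition and Proposition \ref{SumProp} lets us reduce to irreducible summands where Theorem \ref{SeifertThm}(a) (continuity of $X$ on $\CB(\tau)$) applies, we obtain $X(\tau(\bou_n)) \to X(\tau(\bou))$. Comparing the two limits gives $X(\tau(\bou)) = P$.

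For the forward direction, suppose $X(\tau(\bou)) = P$. By Theorem \ref{AchieveX} together with Theorem \ref{TautThm}, there is a $\tau$-taut branched surface $T$ in the finite collection $\{T_i\}$ which fully carries a taut Seifert lamination $T(\bov)$ with $\bdry T(\bov) = \tau(\bou)$ and $\Chi(T(\bov)) = P$. Because $T(\bov)$ is fully carried, every entry of $\bov$ is strictly positive. The cone $\C(T)$ is cut out of $\reals^N$ by linear equations with integer coefficients, so rational points are dense in the open subcone of strictly positive invariant weight vectors. Choose a sequence $\bov_n \to \bov$ of rational invariant weight vectors with all entries positive; then each $T(\bov_n)$ is a rationally weighted surface, which after clearing denominators I write as $q_nF_n$ with $q_n$ rational and $F_n$ an integrally weighted properly embedded surface. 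Setting $\bou_n := \bdry\bov_n$, we automatically have $\bdry F_n$ carried by $\tau$, $q_n\bdry F_n = \tau(\bou_n)$, $\bou_n \to \bou$, and by linearity of $\Chi$ on $\C(T)$, $q_n\Chi(F_n) = \Chi(T(\bov_n)) \to \Chi(T(\bov)) = P$.

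It remains to verify that each $q_nF_n$ is taut. Since $T$ is a sub-branched surface of some $\tau$-select $B_i$ and fully carries the maximal-$\Chi$ Seifert lamination $T(\bov)$ with $\bov$ strictly positive, Lemma \ref{TautLemma} applies directly to $T$: every measured lamination carried by $T$ is either a maximal-$\Chi$ Seifert lamination or a closed lamination of Euler characteristic $0$. Since $\bdry T(\bov_n) = \tau(\bou_n) \neq 0$ for all $n$ (as $\bou_n \to \bou$ and $\bou$ represents a bounding link), $T(\bov_n)$ is not closed, hence is maximal-$\Chi$, and thus $q_nF_n$ is a taut weighted surface as required.

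The only potentially delicate point is ensuring we can invoke Lemma \ref{TautLemma} (which requires strictly positive entries), but this is handled by the fact that the taut branched surface $T$ supplied by Theorem \ref{TautThm} fully carries its Seifert lamination, so $\bov$ lies in the interior of $\C(T)$ and rational approximations can be chosen to remain in that interior. Everything else reduces to linearity of $\Chi$ on the cone and the continuity statement already available from Theorem \ref{SeifertThm}(a).
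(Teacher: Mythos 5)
Your proof is correct, and the forward direction is essentially the paper's own argument (the paper simply asserts that $F_n$ is taut because it is carried by a $\tau$-taut branched surface, whereas you justify this via Lemma \ref{TautLemma}; your care about keeping $\bov_n$ in the interior of $\C(T)$ is a reasonable refinement). The reverse direction, however, takes a genuinely different route. The paper does \emph{not} invoke continuity of $X$ there: it applies Lemma \ref{TechLemma} directly to the sequence $\{F_n\}$, normalizing by the combinatorial boundary weight $K_n$, extracting a $\tau$-select branched surface $B$ and a lamination $B(\bov)$ with $\bdry B(\bov)=\tau(\lambda\bou)$ and $\Chi(B(\bov))=\lambda P$, and then rescaling by $\lambda=\lim 1/(K_nq_n)$ to get $X(\tau(\bou))\ge P$, with tautness of the $F_n$ used only at the end to upgrade this to equality. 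You instead use tautness immediately to write $q_n\Chi(F_n)=X(\tau(\bou_n))$ and pass to the limit via continuity of $X$. Your version is shorter, but it leans on two things the paper's version avoids: (i) continuity of $X$, which is legitimate here since it follows from the finite piecewise linearity established through Theorem \ref{TautThm} and Proposition \ref{SumProp} independently of this corollary (so there is no circularity, but it is worth stating explicitly since the corollary is later used in proving Theorem \ref{XThm}); and (ii) the full strength of the tautness hypothesis on every $F_n$, whereas the paper's route via Lemma \ref{TechLemma} yields $X(\tau(\bou))\ge P$ from any sequence of surfaces without sphere components. One small citation point: in the forward direction you should route the existence of the taut $T$ for a possibly reducible $M$ through Proposition \ref{SumProp} rather than Theorems \ref{AchieveX} and \ref{TautThm} alone, since the latter are proved under an irreducibility hypothesis.
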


\begin{proof}  Suppose $X(\tau(\bou))=P$.   Then there exists a taut branched surface $T$ which fully carries a taut Seifert lamination $T(\bov)$ for $\tau(\bou)$.   Choosing a sequence of rational invariant weight vectors $\bov_n$ for $T$ approaching $\bov$, we can interpret $T(\bov_n)$ as a weighted surface $q_nF_n$.   $F_n$ must be taut, because it is carried by a $\tau$-taut branched surface.  By the linearity of $\Chi$ on $\C(T)$, $\Chi(T(\bov_n))\to \Chi(T(\bov))$, hence $\Chi(q_nF_n)=q_n\Chi(F_n)\to \Chi(T(\bov))=X(\tau(\bou))$.

Now suppose there exists a sequence of weighted properly embedded taut surfaces $q_nF_n$ in $M$ with $q_n\bdry F_n=\tau(\bou_n)$, $\bou_n\to \bou$, and $q_n\Chi(F_n)\to P$.  
We will apply Lemma \ref{TechLemma} to the sequence $\{F_n\}$.   Let $\bdry F_n=\tau(\boy_n)$ and let $K_n$ be the sum of entries of $\boy_n$.  Then $\tau(\bou_n/q_n)=\bdry F_n=\tau(\boy_n)$, $\boy_n=\bou_n/q_n$.  Let $\bow_n=\boy_n/K_n=\bou_n/(K_nq_n)$.  Then $\bow_n=\boy_n/K_n$ must converge to a vector in the projective class of $\bou$, say $\lambda\bou$ for some $\lambda>0$.   This shows $1/(K_nq_n)\to \lambda$.  Also $\Chi(F_n)/K_n=\Chi(q_nF_n)/(q_nK_n)\to \lambda P$.  By Lemma \ref{TechLemma}, there is a $\tau$-select branched surface $B$ and a measured lamination $B(\bov)$ such that $\bdry B(\bov)=\tau( \lambda\bou)$ and $\Chi(B(\bov))=\lambda P$.
Then  $\Chi(B(\bov/\lambda))=P$ and $\bdry B(\bov/\lambda)=\tau(\bou)$, which proves $X(\tau(\bou))\ge P$.  Since $B$ fully carries a taut surface, $B(\bov/\lambda)$ is maximal-$\Chi$, so $X(\tau(\bou))= P$.
\end{proof}

Most of the work has been done for proving the theorems stated in the introduction about peripheral laminations.   

\begin{proof}[Proof of Theorem \ref{XThm}]  The fact that the function $X:\CB(\tau)\to \reals$ is linear on rays of $\CB(\tau)$ is obvious.  

We show that the function $X$ is finite piecewise linear on $\CB(\tau)$.   For every taut $T_i$ described in Proposition \ref{SumProp}, the function $\bdry:\C(T_i)\to \CB(\tau)$ is linear, replacing the entries of $\bov$ by a subset of the entries, with some repetitions.    Now $\Chi(T_i(\bov))$ is a linear function of $\bov$, and $\Chi(T_i(\bov))=X(\bdry \bov)$.  If $\bdry \bov_1=\bdry \bov_2$, then $\Chi(T_i(\bov_1))=\Chi(T_i( \bov_2))=X(\bdry\bov_1)$.   Thus, viewing $\C(T_i)\subset \reals^m$ and $\CB(\tau)\subset \reals^n$, we can regard $\bdry$ as a linear map $\bdry:\reals^m\to \reals^n$, and $\Chi$ is constant on cosets of $\ker(\bdry)$ intersected with $\C(T_i)$.   Therefore $X$ is linear on $\im(\bdry)\subset \CB(\tau)$.  

We know that $\CB(\tau)$ is covered by the finite number of images of the linear boundary maps $\bdry:\C(T_i)\to \CB(\tau)$, hence $X$ is piecewise linear.

To prove concavity of the function $X:\CB(\tau)\to \reals$, we must show $X(t\bow_0+(1-t)\bow_1)\ge tX(\bow_0)+(1-t)X(\bow_1)$.  As usual, we prove this by approximating $t$ by a rational number and approximating $\bow_i$ by a vector with rational entries and clearing denominators so we can work with surfaces.  We will start with the surfaces and work backwards.   Given curve systems $\tau(\boy_0)$ and $\tau(\boy_1)$, where each $\boy_i\in \CB(\tau)$ has integer entries, and given a positive integer $j$,  $0\le j\le k$, suppose $\tau(\boy_0)$ bounds a taut surface $G$ and $\tau(\boy_1)$ bounds a taut surface $H$ (carried by one of the $T_i$).  Now let $F_1,F_2, \ldots F_j$ all be surfaces isotopic to $G$, and let  $F_{j+1},F_{j+2}, \ldots F_k$ all be surfaces isotopic to $H$.  Then by Lemma \ref{SumLemma}, there is a surface $F$ with $\bdry F=\tau(j\boy_0 +(k-j)\boy_1)$ and $\Chi(F)\ge\sum_{i=1}^k\Chi( F_i)=jX(\boy_0)+(k-j)X(\boy_1)$.  This implies $X(j\boy_0 +(k-j)\boy_1)\ge jX(\boy_0)+(k-j)X(\boy_1)$.  Dividing both sides of the equation by $k$, we have

 $$\displaystyle X\left(\frac jk\boy_0 +\frac{k-j}k\boy_1\right)\ge \frac jk X(\boy_0)+\frac{k-j}kX(\boy_1).$$  

\noindent Letting $t=j/k$, we have $X(t\boy_0 +(1-t)\boy_1)\ge tX(\boy_0)+(1-t)X(\boy_1)$.  This is the inequality we want, except here $t$ is rational and the weight vectors $\boy_0$ and $\boy_1$ have integer entries.  We can change $\boy_i$ to  $\bow_i$ with rational entries by dividing both sides of the inequality by another integer, $m$ say, to  get  $$\displaystyle X\left(t\frac{\boy_0}m +(1-t)\frac{\boy_1}m\right) \ge tX\left(\frac{\boy_0}m\right)+(1-t)X\left(\frac{\boy_1}m\right).$$

\noindent  In fact, in this way we can get arbitrary weight vectors $\bow_0$ and $\bow_1$ with rational entries,  as $\displaystyle \bow_0=\frac{\boy_0}m$ and $\displaystyle \bow_1=\frac{\boy_1}m$.   Now we can easily prove the general case by approximation, using Corollary \ref{ApproxCor} .
\end{proof}



\begin{proof}[Proof of Theorem \ref{SeifertThm}]  Theorem 1.8, stated in the introduction easily follows from the results we have proved.
\end{proof}

\section{From peripheral to arbitrary links.}\label{FromTo}

Theorem \ref{Extend} could be regarded as a corollary of Theorem \ref{AchieveX}.  It extends the results of the previous section to non-peripheral lamination links in compact, reducible, oriented 3-manifolds $M$.  

The definition of maximal-$\Chi$ Seifert laminations for non-peripheral links is the same as for peripheral links, and so is the definition of $X$ as a function on the set of links, see Definition  \ref{KnottedDef1}.

Suppose $V_\bow(\tau)$ represents a (non-peripheral) lamination link in $M$.  The added subtlety in the non-peripheral case comes from the fact that if we represent a link by a fixed prelamination $V_\bow(\tau)$, a (maximal-$\Chi$) Seifert lamination may not be representable as $B(\bov)$ with $\bdry B=\tau$.  We can say this in another way:  if $V_\bow(\tau)$ represents a lamination $(L,\mu)$, which we regard as a lamination $L$ in the surface $V(\tau)$, then the leaves of a maximal-$\Chi$ Seifert lamination $(\Lambda,\nu)$ for $(L,\mu)$ may intersect $V(\tau)\setminus L$ in an essential way.   This means that we must choose our representation of the link $(L,\mu)$ as $V_\bow(\tau)$ more carefully.   It turns out that one must choose an irreducible, adigonal, anannular representative $V_\bow(\tau)$.  To show this is possible, we will first study scalloped surfaces.   A fibered neighborhood $V(\tau)$ is related to an underlying scalloped surface $S$ as shown in Figure \ref{ScallopedS}, by ignoring the fibering of $V(\tau)$ by intervals.

\begin{figure}[ht]
\centering
\scalebox{0.5}{\includegraphics{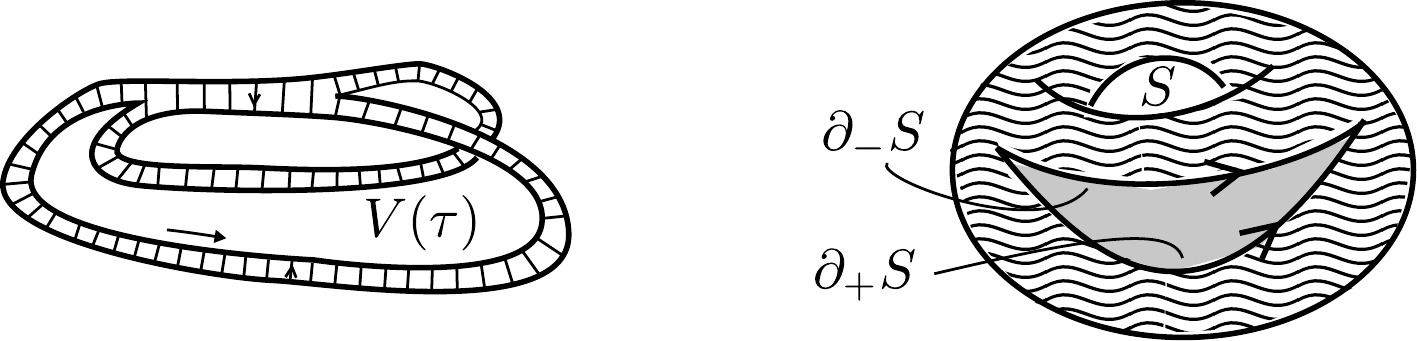}}
\caption{\footnotesize  The underlying scalloped surface associated to $V(\tau)$}
\label{ScallopedS}
\end{figure}

\begin{defn}  A {\it scalloped surface} $S$ is an oriented surface with inward cusps  on its boundary and with $\bdry S$ divided into finitely many maximal smooth curves with consistent tangential orientations.   The orientation of $S$ together with the tangential orientation of smooth curves of $\bdry S$ induce transverse orientations on the smooth boundary curves. The set of points where transverse orientations point to the interior of $S$ is called $\bdry_-S$, the set of points where the transverse orientation is outward is called $\bdry_+S$.   On components of $\bdry S$ subdivided into smooth arcs, the arcs have alternating transverse orientations but consistent tangential orientations at cusps.  We let $C$ denote the union of cusps.  An {\it oriented measured foliation} $\F$ of $S$ is an oriented transversely measured foliation of $S\setminus C$ tangent to $\bdry S$, whose orientation at $\bdry S$ is consistent with the tangential orientation of $\bdry S$, extended smoothly to cusp points.   
  A {\it preleaf} of $\F$ is a leaf of  $\F\setminus C$ completed by any cusp points at one or both ends of the leaf of $\F\setminus C$.   In addition, the foliation is equipped with a (locally finite, full support) transverse measure $\mu$. Thus the oriented measured foliation is actually a pair $(\F,\mu)$.  A {\it leaf} of $\F$ is defined as in the introduction for $V_\bow(\tau)$, see Definition \ref{LeafDef}. \end{defn}

We will often write ``foliation" to mean ``oriented measured foliation." 
Observe that the orientation on $S$ and a tangential orientation on $\F$ yields a transverse orientation for $\F$.   
Conversely, the orientation of $S$ and a transverse orientation for $\F$ yields a tangential orientation for $\F$.

It is often enough to understand measured oriented foliations of connected scalloped surfaces.

\begin{defn}  If $\F$ is a foliation of a connected scalloped surface $S$, and $\ell$ is a preleaf which has one end limiting at $C_0\in C$ (or two ends limiting on $C_0$ and $C_1$) then $\ell\cup\{ C_0\}$  (or $\ell \cup \{C_0,C_1\}$) is a  {\it separatrix}.   If $\gamma$ is a compact arc in a preleaf, disjoint from $\bdry S$ except possibly at endpoints,  then we can {\it split} (cut) $S$ on $\gamma$ to get a new scalloped surface with a new foliation.    A {\it regular splitting of $S$ on leaves of $\F$} is a splitting on finitely many compact arcs in separatrices, where each arc has exactly one end at a cusp.   A {\it regular pinching} is the inverse of a regular splitting.   Two foliations are {\it equivalent by regular pinching and splitting} if one can be obtained from the other by a finite number of regular pinching and splitting operations.

Associated to a foliation $\F$ of $S$ there is a possibly non-compact train track immersed in $S$, which is the union of separatrices and $\bdry S$.   We call this the {\it splitting train track} for $\F$.   We use the same terminology if $\F=V_\bow(\tau)$; the  {\it splitting train track} for $V_\bow(\tau)$ is the union of separatrices and the boundary of the surface $V_\bow(\tau)$.

If $\F$ has a compact separatrix $\gamma$ (both ends at cusps), then we say $\gamma$ is a {\it reducing arc} and we say $\F$ is {\it reducible}.  If $S$ contains a torus $\hat S$ and  $\hat S$ is foliated by circles, then any of these circle leaves is a {\it reducing curve} and $\F$ is {\it reducible}.   If $\F$ has no reducing arcs  or closed curves, we say $\F$ is {\it irreducible}.
\end{defn}

Notice that a leaf of $\F$ may be a preleaf, or it may appear in $\F$ as an embedded curve in the splitting train track.

Since a reducible oriented foliation of a scalloped surface $S$ foliates a simpler scalloped surface obtained by splitting on reducing arcs and/or closed curves, it is natural to assign the foliation to the simpler scalloped surface.    

Note that if we split on an arc $\gamma$ in a separatrix  with exactly one point of $\bdry\gamma$ at a cusp, then we obtain a new scalloped surface which is isomorphic to the old one.  Thus regular splitting on a leaf of $\F$ always yields a scalloped surface diffeomorphic to the original $S$.    If we split on a reducing arc $\gamma$, then we obtain a new scalloped surface which has fewer cusps than the old one.  This is why we call $\gamma$ a reducing arc.  If we split on an arc $\gamma$ in a separatrix  with neither point of $\bdry\gamma$ at a cusp, then we increase the number of cusps on the boundary of the scalloped surface.   We shall avoid this last type of splitting.

An important observation about splitting tells us that regular splitting of a measured foliation $\F$ of $S$ yields a diffeomorphic scalloped surface $S'$ with a foliation $\F'$ of $S'$ isotopic to $\F$ when $S'$ is identified with $S$ using a suitable diffeomorphism:

\begin{proposition}  Suppose $S$ is a connected scalloped surface with an oriented measured foliation $\F$.   Suppose $\gamma$ is an arc in a separatrix with exactly one end at a cusp $C_0$.   Then splitting along $\gamma$ yields a diffeomorphic scalloped surface $S'$ with an oriented measured foliation $\F'$.   Let $E$ be a disk neighborhood of $\gamma$ in $S$ such that $E\cap \bdry S$ is an arc containing $C_0$ in its interior.  There is a diffeomorphism $\phi:S\to S'$, which is the identity in the complement of $E$ such that  $\phi\inverse(\F')$ is isotopic to $\F$.

Essentially the same statement can be made for any regular splitting involving finitely many disjoint compact arcs, each with exactly one end at a cusp.   
\end{proposition}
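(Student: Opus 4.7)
The plan is to reduce everything to a local analysis inside the neighborhood $E$, where $S$ and $S'$ differ only by the sliding of a cusp along $\gamma$. Once a suitable local normal form is in hand, I would construct $\phi$ explicitly, taking it to be the identity outside $E$ and a cusp-moving diffeomorphism inside.

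First I would set up a local normal form near $\gamma$. Since $\gamma$ is a compact arc in a preleaf of $\F$ with exactly one end at the cusp $C_0$ and the other end $P$ in the interior of $S$, the chart structure of the transversely measured oriented foliation $(\F,\mu)$ provides coordinates $(s,t)$ on a neighborhood $U \supset \gamma$ in which $\gamma = [0,L]\times\{0\}$, $C_0 = (0,0)$, $P = (L,0)$, the leaves of $\F$ are horizontal, and the transverse measure is $dt$. Near $C_0$ the two boundary arcs of $\bdry S$ meeting at the cusp enter $U$ tangent to the horizontal, one into the region $t>0$ and one into $t<0$, in agreement with the aligned tangential orientations and alternating transverse orientations required at a cusp. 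Near $P$ the entire coordinate box lies in the interior of $S$. I then shrink $E \subset U$ so that $E \cap \bdry S$ is exactly the single arc containing $C_0$ in its interior prescribed by the statement, and $\bdry E \cap \intr(S)$ consists of two short arcs transverse to $\F$.

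Next I would describe the splitting in these coordinates. Cutting along $\gamma$ duplicates it into copies $\gamma^+$ and $\gamma^-$ (limits from $t>0$ and $t<0$), each becoming part of $\bdry S'$. The two boundary arcs previously terminating at $C_0$ now continue smoothly along $\gamma^+$ and $\gamma^-$ all the way to $P$, where they meet tangentially and form a new cusp, while the former cusp at $C_0$ is smoothed away because the tangential orientations of the incoming arcs were already aligned with $\gamma$. Hence $S$ and $S'$ are diffeomorphic scalloped surfaces, with the cusp transported from $C_0$ to $P$ along $\gamma$. Outside $E$, $S \setminus E$ and $S' \setminus E$ agree canonically; inside $E$, both $S \cap E$ and $S' \cap E$ are diffeomorphic to the standard half-disk with one inward cusp on the boundary and a horizontal measured foliation. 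I construct $\phi$ by taking the identity outside $E$ and, inside $E$, a diffeomorphism of the form $(s,t)\mapsto (f(s,t),t)$ that fixes $\bdry E \cap \intr(S)$ pointwise, carries $C_0$ to $P$, and matches the two distinguished boundary arcs of $\bdry S \cap E$ to the corresponding arcs of $\bdry S' \cap E$; such an $f$ exists because the two cusped half-disk models are diffeomorphic rel the relevant outer boundary.

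The verification that $\phi\inverse(\F')$ is isotopic to $\F$ is then almost immediate: outside $E$ the two foliations coincide, while inside $E$ both are horizontal foliations with the same transverse measure $dt$ on diffeomorphic cusped disks, so a straight-line deformation of the cusp position along $\gamma$ (together with the corresponding isotopy of $f$) supplies an isotopy through oriented measured foliations, rel $\bdry E$. The ``essentially the same statement'' for a regular splitting on finitely many disjoint compact arcs follows by induction: pairwise disjoint neighborhoods $E_i$ can be chosen, and the construction applies independently in each. The main obstacle will be the first step: writing down a local normal form that simultaneously straightens $(\F,\mu)$ so that $\gamma$ is a horizontal coordinate segment, matches the horizontal transverse measure, and faithfully represents the cusp geometry at $C_0$. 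Once that normal form is in hand, the construction of $\phi$ and the isotopy are essentially formal.
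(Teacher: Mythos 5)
Your proof is correct, and it fills in the details of exactly the cusp-sliding argument the paper has in mind: the paper itself offers no proof beyond the remark that ``the proof should be clear,'' together with the surrounding assertion that regular splitting amounts to an isotopy of $\F$ in $S$. Your local normal form, the observation that cutting transports the cusp from $C_0$ to the interior endpoint of $\gamma$ while smoothing the old cusp, and the fiber-preserving diffeomorphism supported in $E$ are precisely the intended justification, and the extension to finitely many disjoint arcs via disjoint neighborhoods $E_i$ is likewise the expected one.
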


The proof should be clear.   From the proposition we conclude that regular splitting of $(S,\F)$ can be obtained by an isotopy of $\F$ in $S$.

\begin{lemma} \label{ReducingLemma} (a)  Suppose $\F$ is an oriented measured foliation of a scalloped surface $S$.   Then by splitting $S$ on all reducing curves, one obtains an irreducible oriented foliation $\hat \F$ on a scalloped surface $\hat S$.   In each component $S_i$ of $\hat S$, the foliation $\hat \F$ restricts to an irreducible $\F_i$.  

\noindent (b) An irreducible $\F_i$ on $S_i$ is either a product foliation by $S^1$'s of an annulus, an irrational slope foliation of a torus, or it has the property that the union of separatrices is dense.  Further, if an irreducible $(S_i,\F_i)$ is not a foliated annulus, every boundary component of $S_i$ contains at least two inward cusps.
\end{lemma}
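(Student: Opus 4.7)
For part (a), I would verify that any maximal sequence of splittings on reducing arcs and reducing closed curves terminates. Splitting on a reducing arc $\gamma$ running from cusp $C_0$ to cusp $C_1$ replaces these two inward cusps by smooth boundary points: at $C_0$ the two boundary arcs meeting the cusp are tangent to $\gamma$ (all three being leaves of $\F$), so after cutting along $\gamma$ the new boundary joins smoothly on each side. Hence each arc splitting decreases $|C|$ by $2$, and at most $|C|/2$ arc splittings can occur before no reducing arcs remain. Splitting on a reducing closed curve in a torus component converts that torus into an annulus with smooth boundary, and since $S$ has only finitely many torus components, this too terminates. The resulting scalloped surface $\hat S$ inherits a foliation $\hat\F$ which, by construction, has no reducing arcs and no torus component foliated by circles, so its restriction $\F_i$ to each component $S_i$ is irreducible.

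For part (b), I would run a case analysis on the topology of $S_i$. If $S_i$ is closed it has no cusps, so $\F_i$ has no singularities; the Poincar\'e--Hopf formula forces $\chi(S_i)=0$, and orientability then makes $S_i$ a torus. An oriented measured foliation on a torus is either rational-slope (parallel circles, which would be reducible) or irrational-slope, so irreducibility picks out the latter. If $S_i$ is a smooth annulus with no cusps, integrating the transverse measure across $S_i$ produces a continuous height function whose level sets are closed leaves, yielding the product foliation by $S^1$'s. Otherwise $S_i$ carries at least one cusp; supposing for contradiction that the union $\Sigma$ of separatrices is not dense, choose a component $U$ of $S_i\setminus\overline\Sigma$. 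On $U$ the foliation is locally a product with no singular leaves, so by a Poincar\'e--Bendixson-type argument a leaf in $U$ is either closed in $U$ or recurrent. A closed leaf $\ell\subset U$ lies in a maximal product annular neighborhood whose frontier in $S_i$ is contained in $\overline\Sigma$; examining this frontier one extracts either a smooth closed boundary leaf of $S_i$ (forcing $S_i$ to be a foliated annulus, contrary to case) or a chain of separatrices containing a compact one---a reducing arc, contradicting irreducibility. The recurrent case reduces to the closed-leaf case, since in a measured foliation a nontrivial minimal set meeting $U$ must be an annular family of closed leaves, again producing a reducing curve or reducing arc.

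For the final assertion that every boundary component of a non-annular irreducible $(S_i,\F_i)$ has at least two cusps, I would first observe that because the smooth arcs on each boundary component alternate in transverse orientation at cusps, the total cusp count on any boundary component is even; thus it suffices to rule out a boundary component $\alpha$ with zero cusps. Such $\alpha$ would be a smooth closed leaf, and by the measured-foliation analysis above a collar neighborhood of $\alpha$ would be foliated by parallel closed leaves. The maximal such annular collar is either all of $S_i$ (making $S_i$ a foliated annulus, contrary to hypothesis) or has an interior frontier in $\overline\Sigma$ containing separatrices; irreducibility then forces the existence of a reducing arc in this frontier, a contradiction. The main obstacle I expect is the Poincar\'e--Bendixson step in part (b): one must carefully classify the possible limit sets of an irreducible measured foliation on a scalloped surface with cusps, ruling out exotic exceptional minimal sets. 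This should ultimately follow from the finiteness of the number of separatrix-ends (bounded by $|C|$) and the finiteness of the total transverse measure on the compact surface $S_i$, but the bookkeeping of how separatrices limit onto the frontier of a product region must be done carefully.
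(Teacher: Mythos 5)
Your part (a), your handling of the torus and annulus cases, your exclusion of closed leaves via maximal measured product collars, and your parity argument reducing ``at least two cusps'' to ``not zero cusps'' all track the paper's proof closely (the paper is terser about termination in (a), but your count is fine). The genuine gap is in the density-of-separatrices step, and it sits exactly where you yourself flag trouble. Your dispatch of the recurrent case rests on the assertion that ``in a measured foliation a nontrivial minimal set meeting $U$ must be an annular family of closed leaves.'' This is unjustified and, as stated, false: in a minimal component of a measured foliation every leaf is recurrent and non-closed, and its minimal set is the whole component. You cannot exclude that a recurrent leaf of $U$ has closure equal to a minimal component whose frontier does contain separatrices -- ruling that out is essentially the statement you are trying to prove, so the step is circular. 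Note also that your argument for this step never actually uses local finiteness of the transverse measure, which is the decisive ingredient in the paper's proof; the ``finiteness of the total transverse measure'' you mention at the end is not woven into the logic.

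The paper's mechanism, which you should substitute for the Poincar\'e--Bendixson assertion, is the following. Let $\rho$ be the splitting train track (the union of the separatrices and $\bdry S_i$), and pass to the completion $\hat S_i$ of $S_i\setminus \cl(\rho)$. Each component of $\hat S_i$ is a foliated surface with infinite outward cusps whose boundary consists of leaves. Since closed leaves have already been excluded, $\bdry\hat S_i$ contains no closed curve, and the only remaining possibility for a nonempty component is an infinite foliated product $\reals\times[0,m]$ with two infinite cusps. If $m>0$, a non-compact boundary leaf of this product accumulates somewhere in the compact surface $S_i$, so a short transversal near an accumulation point meets the product in infinitely many sheets, each of transverse measure $m$, contradicting local finiteness of $\mu$. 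Hence the complement is empty and the separatrices are dense. This replaces your case analysis of limit sets entirely and avoids any appeal to the structure of minimal sets.
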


\begin{proof}   (a) If there is a reducing arc in $\F$, joining two different cusps on $\bdry S$, we split on it to obtain a scalloped surface with fewer cusps.  After splitting on all compact separatrices joining different cusps, we have a scalloped surface $\hat S$ with components $S_i$, with oriented measured foliation $(\hat F,\hat \mu)$ having the property that every separatrix is non-compact.   Next, for any torus component which is foliated by circles, we split on one of the closed curve leaves.    In all cases, the result is a foliated scalloped surface $(\hat S,\hat \F,\hat \mu)$ which is irreducible.

(b) First we deal with a torus $S_i$.   If $\F_i$ is a foliation by circles, then it is reducible.  The only other measured foliations of a torus are irrational slope foliations, which are irreducible.  

Next we analyze foliations which contain closed leaves.   Let $S_i$ be a component of $\hat S$ foliated by $\F_i$, and suppose $\F_i$ contains a closed leaf.   If $\bdry S_i$ has a closed boundary component $K$ without cusps, say $K\subset \bdry_- S$, we claim there is a maximal measured product $K\times [0,m]$ in $\F_i$ with $K\times \{0\}=K$.  To see that there is some foliated product neighborhood of $K$, observe that if a segment of a leaf in a regular neighborhood of $K$ intersects twice a small transversal with one end at a point of $K$ without forming a closed leaf isotopic to $K$, then the foliation is not measured.  To show there is a maximal such product, observe that if $K_t=K\times \{t\}$ is a leaf in a product with $K=K_0$, and $K_t$ does not intersect $\bdry S_i$, then we can again argue that nearby leaves must be isotopic to $K_t$.  On the other hand, if a maximal product has the form $K\times [0,m)$ for some real $m$, then as $t\to m$, $K_t$ converges uniformly to a closed leaf $K_m$, which must intersect $\bdry S_i$.  $K_m$ cannot be contained in the interior of $S_i$, otherwise the product could be extended further.   If $K_m\nsubset \bdry S_i$, then $\hat \F$ has a reducing arc, a contradiction, so the component $S_i$ is a foliated product.    More generally, if $\F_i$ contains any closed leaf $K$, a maximal measured product $K\times [0,m] $  with $m>0$ (or $K\times [-m,0]$ or $K\times [-m_1,m_2]$, $m_i>0$) and $K\times 0=K$ gives a contradiction in the same way.  (Note that $K$ may intersect $\intr(\bdry_- S)$ or $\intr(\bdry_+S)$.)  We have shown that the foliation $\F_i$ contains a closed leaf only if $S_i$ is an annulus with a product foliation $\F_i$.
We must also consider the possibility that the maximal product takes the form $K\times S^1$, in case $S$ has torus components, but this is ruled out by irreducibility.  Therefore we can now assume every boundary component of $S_i$ includes at least two cusps and we can assume $\F_i$ contains no closed leaves.

Now we consider an $(S_i,\F_i,\mu_i)$ which is not an annulus and not a torus and contains no closed leaves.  Assuming $\F_i$ is irreducible, let  $\iota:\rho\to \F_i$ be the immersion of the splitting train track.  The separatrices are all non-compact.   So $S_i\setminus \cl(\iota( \rho))$ is a foliated open submanifold of $S_i$, whose completion $\hat S_i$ must be a foliated surface with infinite outward cusps (like the complement of a lamination in a closed surface).    Now $\bdry \hat S_i$ cannot contain a closed curve, since this would be a closed leaf in the original foliation, and $S_i$ would be an annulus.   The only other possibility for a component of $\hat S_i$ is a foliated infinite product $\reals\times [0,m]$ with two infinite cusps.   If this foliated product has non-trivial transverse measure, one can easily show that the transverse measure on $\F$ is not locally finite.  The contradiction shows that $\iota(\rho)$ is dense, and also that the union of separatrices is dense.
\end{proof}

Our next goal is to prove Proposition \ref{AdigonalProp}.    The statement indicates that a representative $V_\bow(\tau)$ of a lamination link in $M$ can be replaced by a new representative which is irreducible, adigonal, and anannular, and which is unique up to isotopy when viewed as an oriented measured foliation of a scalloped surface embedded in $M$.  We prepare for the proof with the following lemma:

\begin{lemma} \label{DigonPinch} Suppose $S\embed M$ is a scalloped surface.
\hip

\noindent   (i)  Suppose also that $D$, $E$ are complementary digons with $\bdry D=\bdry E$.   Suppose the component of $S\cup D$ containing $D$ is not a closed surface.   Then $S\cup D$ is isotopic in $M$ to $S\cup E$.
\hip\noindent
More generally, if $\bdry S$ has several boundary components bounding digons $D_1, D_2, ..., D_k$, and $S\cup D_1\cup\cdots \cup D_k$ is an embedded surface with no closed components, then up to isotopy, replacing each $D_i$ by another digon $D_i'$ with the same boundary, where digons $D_i'$  are disjoint, one obtains a surface $S\cup D'_1\cup\cdots \cup D'_k$ isotopic in $M$ to $S\cup D_1\cup\cdots \cup D_k$.
\hip
\noindent (ii)  Suppose $S$ has complementary annuli $A_1,A_2,\ldots, A_k$ such that $S\cup A_1\cup A_2\cup\cdots\cup A_k$ is an embedded surface with no closed components.   Suppose $\bdry A_i$ intersects $S$ only in annulus components of $S$.   Then replacing each $A_i$ with another complementary annulus $A_i'$ having the same boundary, such that the $A_i'$'s are disjoint, yields a scalloped surface $S\cup A_1'\cup A_2'\cup\cdots\cup A_k'$ isotopic to  $S\cup A_1\cup A_2\cup\cdots\cup A_k$.
\hip
\noindent (iii) Suppose $S$ has two complementary digons $D_1$ and $D_2$ such that $S\cup D_1\cup D_2$ is an embedded surface possibly including a component containing $D_1$ and $D_2$.  We are interested in the case where $S\cup D_1\cup D_2$ has a torus component containing $D_1$ and $D_2$.   Then $S \cup D_1$ is isotopic to $S\cup D_2$.
\hip
\noindent (iv)  Suppose $S$ has two complementary annuli $A_1$ and $A_2$  such that $S\cup A_1\cup A_2$ is an embedded scalloped surface containing a  torus containing $A_1$ and $A_2$ as $\pi_1$-injective annuli.  Then $S \cup A_1$ is isotopic to $S\cup A_2$.

\end{lemma}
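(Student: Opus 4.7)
The plan is to construct an explicit ambient isotopy of $M$ supported in a neighborhood of the closed surface formed by the union of the given disks (or annuli) in each of the four parts.

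For (iii) and (iv), where a torus $T$ is already given, I would proceed as follows. Since $M$ is orientable, $T$ is two-sided and has a tubular neighborhood $T \times (-\epsilon,\epsilon)$, chosen small enough that $S \cap (T \times (-\epsilon,\epsilon)) = S \cap T \subset T \times \{0\}$. Within $T$, the two disks $D_1, D_2$ in (iii) (resp. essential annuli $A_1, A_2$ in (iv)) can be interchanged by a self-diffeomorphism $\phi\colon T \to T$ isotopic to the identity: for (iii), two disjoint nullhomotopic disks in a torus are exchanged by a translation (viewing $T = \reals^2/\integers^2$ with $D_1, D_2$ placed near points differing by a half-period); for (iv), the $\pi_1$-injectivity hypothesis makes the core curves of $A_1, A_2$ essential, and disjoint essential simple closed curves in a torus co-bound an annulus and are hence isotopic, so an analogous translation swaps $A_1$ and $A_2$. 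A corresponding isotopy $\phi_t$ with $\phi_0 = \mathrm{id}$, $\phi_1 = \phi$ extends to an ambient isotopy $\Phi_t$ of $M$ via the product structure and a bump function, identity outside $T \times (-\epsilon,\epsilon)$. Under $\Phi_1$, the portion of $S \cup D_1$ inside the tubular neighborhood, namely $(S \cap T) \cup D_1 = T \setminus D_2$, is mapped to $T \setminus D_1 = (S \cap T) \cup D_2$, so $S \cup D_1$ is carried to $S \cup D_2$ (and analogously for (iv)).

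For (i), the analogous argument fails on the sphere $\Sigma = D \cup E$: an orientation-preserving self-diffeomorphism of $\Sigma$ fixing $\gamma = \bdry D$ pointwise cannot swap the two hemispheres. Instead, the plan uses a $3$-ball. After cleaning up $D \cap E$ to $\gamma$ via an innermost-disk surgery argument (an innermost circle of $D \cap E$ on $D$ bounds a disk that can be swapped with the corresponding disk in $E$ to reduce intersections), $\Sigma$ becomes an embedded $2$-sphere, with the two cusps on $\gamma$ smoothed out because $D$ and $E$ approach $\gamma$ from the same local complementary side of $S$ at each cusp. The main claim is that $\Sigma$ bounds a $3$-ball $B \subset M$ with $S \cap \intr(B) = \emptyset$. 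I would argue by contradiction: if $\Sigma$ were essential in $M$, or if every side of $\Sigma$ met $S$ in a closed component, then the component $F$ of $S \cup D$ containing $D$ would be forced to close up on the appropriate side of $\Sigma$, contradicting the non-closedness hypothesis. Given such a ball $B$, the standard isotopy pushing $D$ through the interior of $B$ onto $E$ (fixing $\gamma$) extends by identity outside a slight thickening of $B$ and carries $S \cup D$ to $S \cup E$. The multiple-digon generalization follows by performing the isotopies sequentially in disjoint balls.

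For (ii), the argument parallels (i) using tori and solid tori: form each $T_i$ as $A_i \cup A_i'$ together with annular strips of the annulus components of $S$ containing their common boundary, show $T_i$ bounds a solid torus $V_i$ with $S \cap \intr(V_i) = \emptyset$ (using the no-closed-components hypothesis analogously), arrange the $V_i$ to be disjoint, and use $V_i$ to isotope $A_i$ to $A_i'$ fixing the boundary. The main obstacle across parts (i) and (ii) is rigorously establishing the bounding claims for $\Sigma$ and $T_i$ in a possibly reducible, non-atoroidal ambient manifold; the hypotheses on non-closedness of components must be combined with the local structure of complementary digons and annuli near cusps (both $D$ and $E$, resp. $A_i$ and $A_i'$, lie in the same local complementary region of $S$) to rule out essential closed surfaces bounded by these spheres and tori.
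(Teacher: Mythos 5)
Your treatment of parts (iii) and (iv) is essentially the paper's: an isotopy of the torus component in itself exchanging the two digons (respectively the two disjoint essential annuli), extended to the ambient manifold through a product neighborhood. That part is correct.

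For parts (i) and (ii) there is a genuine gap, and it sits exactly where you flagged ``the main obstacle.'' Your argument for (i) hinges on the claim that the sphere $\Sigma=D\cup E$ bounds a ball $B$ with $S\cap\intr(B)=\emptyset$, deduced from the hypothesis that the component of $S\cup D$ containing $D$ is not closed. That deduction fails: the lemma does not assume $M$ is irreducible (indeed Proposition \ref{SumProp} uses this material in reducible manifolds), and $\Sigma$ meets $S$ only along $\gamma=\bdry D$. Whether the component of $S$ attached to $\gamma$ eventually closes up is logically independent of whether $\Sigma$ is essential --- that component can leave a neighborhood of $\Sigma$, run through a handle or a connected-sum neck of $M$, and still have free boundary while $\Sigma$ bounds no ball on either side. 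The situation is worse in (ii): the torus built from $A_i\cup A_i'$ need not bound a solid torus at all, let alone one with interior disjoint from $S$. The paper's proof never produces a closed complementary region. It uses the non-closedness hypothesis for what it actually provides: a free boundary arc $\beta\times\{0\}$ of $S\cup D$ in the component containing $D$, joined to $\alpha_0\subset\bdry D$ by a band $Q\subset S$. One first isotopes $S\cup D$ by retracting the free edge across the disk $Q\cup D$, so the image of the embedding becomes $\overline{S\setminus Q}$ and no longer contains $D$; one then extends the free edge back out across $E\cup Q$, arriving at $S\cup E$. No ball, no irreducibility, and no control of $D\cap E$ is needed, and the apparent homological obstruction when $[\Sigma]\neq 0$ is absent because the boundary of the surface sweeps across $\Sigma$ during the isotopy. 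Part (ii) is handled in the same spirit: $S_1\cup A\cup S_2$ is an annulus with free boundary, hence isotopic to $S_1$ by collapsing the collar, and likewise for $A'$. To salvage your route you would need irreducibility and atoroidality hypotheses the lemma does not have; otherwise you should replace the ball/solid-torus step by the retraction argument.
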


\begin{proof} (i) Let $\alpha_0$ and $\alpha_1$ be complementary smooth arcs in $\bdry D$, with outward cusps at $\bdry \alpha_0$.  Now, assuming we have chosen a metric on $M$, we can isotope $S$ and $D$ in $M$ such that $\alpha_0$ is near an arc in $\bdry S\cup D$, as shown in Figure \ref{DigonFig}.   (The metric is only for purposes of visualization.)  Then we choose a product $Q=\beta\times[0,1]$ with $\beta\times\{0\}\subset \bdry (S\cup D)$ and $\beta\times \{1\}\subset \alpha_0$.    To construct an isotopy from $S\cup D$ to $S\cup E$ in $M$, we move $\beta_0\subset \bdry S$ to $\alpha_1$ through the product $Q\cup D$ in $S$, extending the isotopy to  $Q\cup D$.   Then we move $\alpha_1\subset \bdry E$ through the product $Q\cup E$ back to $\alpha_0$, extending the isotopy and returning $Q$ to its original position.   Extending the isotopy to all of $S\cup D$, we obtain the desired isotopy.

To prove the more general statement, we apply the first statement repeatedly.  

(ii) The proof of the second statement is similar.  First we consider the case $n=1$.  In this case there is just one complementary annulus $A=A_1$ with $\bdry A$ attached to annulus components $S_1$ and $S_2$ of $S$,   We have another annulus $A'$ with the same boundary.   Then $S_1\cup A \cup S_2$ is isotopic to $S_1$ and $S_1\cup A' \cup S_2$ is isotopic to $S_1$.

(iii)  Suppose $F$ is the component of $S\cup D_1\cup D_2$ containing the two digons.   If we remove the interior of $D_1$, we obtain a surface $F'$ say.   Now choose an isotopy of $D_1\cup D_2$ in $F$ which exchanges the digons.    Extending the isotopy, we have an isotopy of $F$ in itself which exchanges the digons.   This shows that, up to isotopy, removing the interior of $D_1$ yields the same scalloped surface as removing $D_2$.  

(iv) The proof of this part is the similar to (iii), but now we need the annuli $A_1$ and $A_2$ to be essential and disjoint in the torus component $F$ of $S\cup A_1\cup A_2$ which contains $A_1$ and $A_2$ to ensure that there is an isotopy of $A_1\cup A_2$ in $F$ which exchanges the annuli $A_1$ and $A_2$.  
\end{proof}

\begin{figure}[ht]
\centering
\scalebox{0.4}{\includegraphics{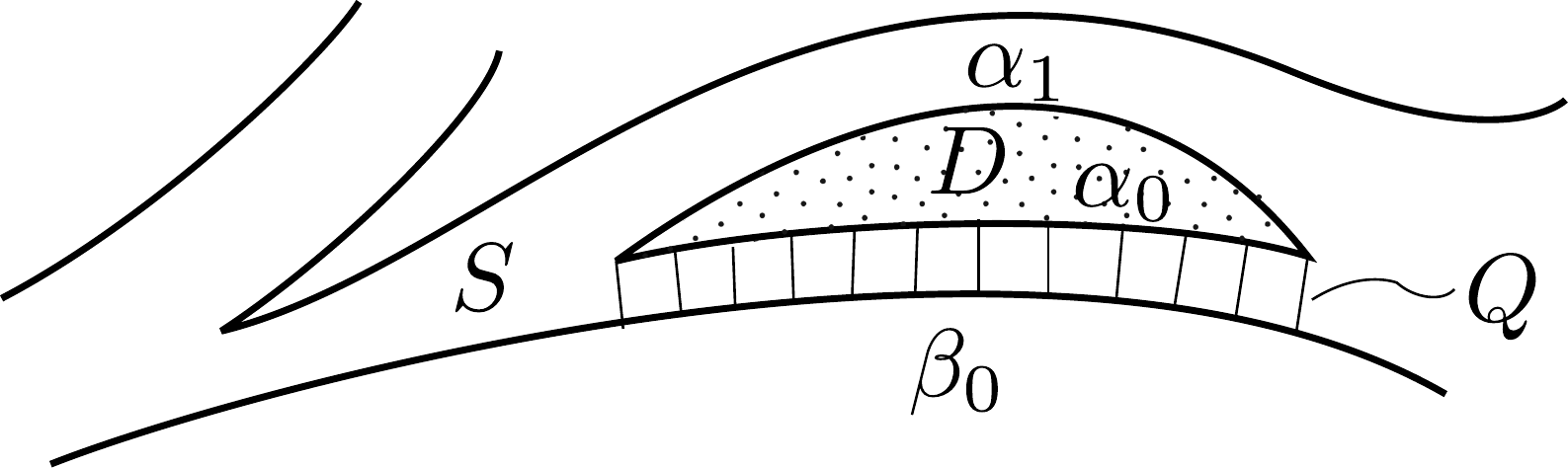}}
\caption{\footnotesize  Digon for a scalloped surface $S$.}
\label{DigonFig}
\end{figure}

\begin{proof}[Proof of Proposition \ref{AdigonalProp}]   We begin by explaining how to obtain the canonical underlying scalloped surface associated to a given lamination link represented by $V_\bou(\rho)$.   Later, we shall prove that this scalloped surface is unique up to isotopy.   Ignoring the fibered structure of $V_\bou(\rho)$ we obtain a scalloped surface with an oriented measured foliation, $(S_0,\F_0,\nu_0)$.   We begin by splitting on every reducing arc to ensure that $\F_0$ is irreducible.   In particular, this means we obtain a new $S_0$ such that any component of $S_0$ with a closed smooth boundary component is an annulus.  Now we attach a maximal disjoint collection of complementary digons and  annuli and pinch along these digons annuli to obtain $(S_1,\F_1,\nu_1)$.   However, we choose a maximal collection subject to the condition that no component of $S_1$ is closed, or in other words subject to the condition that no component of $S_1$ is a torus.  This means, by Lemma \ref{ReducingLemma}, that the only components of $S_1$ which have complementary digons are embedded tori with the interior of a digon removed; the only components of $S_1$ which have complementary annuli are annuli.  

(a)  Now we show that if $(S,\F,\mu)$ is a scalloped surface with an irreducible, adigonal, and anannular measured oriented foliation, then this can be represented by a prelamination $V_\bow(\tau)$.  Pinching on any complementary digons or annuli for $S$, we obtain $(S',\F',\mu')$.   Now $S'$ may contain torus components with irrational slope foliations or foliations by closed curves.   

From  $(S',\F',\mu')$ we can recover a lamination link represented in the form $V_\bow(\tau)$, with underlying scalloped surface $S$ as follows.   We first consider a torus component $F$ of $S'$.   If $F$ is foliated by circles, we cut on a single circle leaf to obtain an annulus foliated by circles.   It is easy to choose a vertical foliation of $F$, transverse to $\F'$ which gives $F$ the structures of a train track neighborhood of a closed curve $\theta$.  In this way we represent the foliation of $F$ by $V_\boy(\theta)$. Next suppose $F$ is a torus component of $S'$ with an irrational slope foliation.  We choose a ``vertical" foliation of $F$ by circles which is transverse to $\F'$.  By splitting on sufficiently long arc in a (dense) leaf, the leaves of the vertical lamination are cut into arcs, which means that we get  a prelamination $V_\boy(\theta)$, say, where $V(\theta)$ has a single complementary digon.    If $F$ is a component of the scalloped surface $S'$ with at least two inward cusps on each boundary component, then by Lemma \ref{ReducingLemma}, the union of separatrices of $\F'$ in $F$ is dense.  We choose any ``vertical" foliation $\V$ of $F$ transverse to $\F'$ in $F$.  If we regular-split on sufficiently long arcs with one end at a cusp, we ensure that all leaves of the vertical foliation are cut into compact segments, and we obtain a representation $V_\boy(\theta)$ of this component of the link.  Combining the various prelaminations $V_\boy(\theta)$ obtained from different components of $S'$, we obtain a prelamination $V_\bow(\tau)$ representation of  $(S,\F,\mu)$.

(b)  We now know that a lamination link can equally well be represented as an irreducible, adigonal, anannular measured oriented foliation of a  of  scalloped surface or as an irreducible, adigonal, anannular prelamination.   Our next goal is to show that the underlying scalloped surface associated to a lamination link is unique up to isotopy.

Suppose $V_\bou(\rho)$ and $V_\bow(\tau)$ are equivalent representatives of a lamination link in $M$,  and both are irreducible, adigonal, and anannular..   We will show that they have isotopic underlying scalloped surfaces.  The fact that they are equivalent means there is a 1-parameter family of prelaminations $P_t$, $0\le t\le 1$, where $P_0=V_\bou(\rho)$ and $P_1=V_\bow(\tau)$.  Ignoring the fibered neighborhood structure, we have a 1-parameter family of scalloped surfaces with foliations, say $Q_t=(S_t,\F_t,\mu_t)$.   For the 1-parameter family $Q_t$, there are finitely many ``events" each of which corresponds to:  (i) splitting on a reducing arc if $V_\bou(\rho)$ is reducible or pinching to introduce a reducing arc;  (ii) pinching on a complementary digon or splitting on an arc to introduce a complementary digon; and (iii) pinching on a complementary annulus or cutting on a closed leaf to introduce a complementary annulus.  Regular splitting does not change the isotopy class of $S_t$ or $Q_t$.   Suppose now that on an interval of time there is a reducing arc in $Q_t$, then we split to eliminate the reducing arc throughout the interval of time.  We do this for all 1-parameter families of reducing arcs.  This ensures that $Q_t$ is irreducible at all times.   Similary, if $Q_t$ is not adigonal on an interval of time, and a certain digon $D_t$ persists in that interval of time, then we pinch throughout the interval of time, choosing a maximal time interval in which the complementary digon exists.   By Lemma \ref{DigonPinch}, this uniquely determines the isotopy class of $S_t\cup D_t$.  In this way we construct a 1-parameter family $Q_t$ such that the isotopy class of $S_t$ is unchanged over the interval $[0,1]$.  There is one situation where we avoid pinching on a digon over a ``maximal" time interval.  Namely, if $S_t$ has a component $F_t$ which is a torus with several digons excised over a time interval, then one must pinch on all but one digon.    Since there is a choice as to which complementary digon to leave, we must allow two complementary digons to coexist over some very short subintervals of $[0,1]$.   But Lemma \ref{DigonPinch}(iii) shows that pinching one of them gives the same scalloped surface, up to isotopy, as pinching the other.   Thus even in this situation, the isotopy class of the scalloped surface $S_t$ is uniquely determined if we agree that over intervals where two digons coexist, we let $S_t$ denote the scalloped surface obtained by pinching either one of them.  

If $P_t$ fails to be anannular over an interval of time, we similarly pinch throughout a maximal time interval during which some complementary annulus persists.  If we do this for as many complementary annuli as possible, without introducing torus components of $S_t$, we can arrange that $P_t$ is anannular at all times.   Once again, there is a caveat.  Pinching different complementary annuli  in different time intervals may make it necessary to allow two complementary annuli to coexist over some very short time intervals.   Once again, letting $S_t$ denote the isotopy class of the scalloped surface obtained by pinching on either of these annuli, then using Lemma \ref{DigonPinch}(iv), we show $S_t$ is well-defined up to isotopy.

After all of these modifications, $S_t$ yields an isotopy of scalloped surfaces.
 \end{proof}

Next we will show that the supremum in the definition of $X(V_\bow(\tau))$ can be achieved by a Seifert lamination.

\begin{proof}[Proof of Theorem \ref{Extend}] We represent a lamination link $(L,\mu)$ in a compact, orientable 3-manifold as $V_\bow(\tau)\embed \intr(M)$.  Suppose $V_\bow(\tau)$ is a representative with the canonical (up to isotopy) underlying scalloped surface; this means that it is irreducible, adigonal, and anannular.   We can then analyze Seifert laminations for $V_\bow(\tau)$  viewed as a peripheral link in the knot exterior $\hat M$.  If  $V_\bow(\tau)$ is viewed as a peripheral lamination in $\hat M$, we have
$$\hat X(V_\bow(\tau))=\sup\left\{\Chi(B(\bov)):(B,\bdry B) \embed (\hat M,\bdry \hat M),\ \bdry B(\bov)=\tau(\bow),\ B \text{ aspherical}\right\}.$$

\noindent When $V_\bow(\tau)$ is viewed as a peripheral lamination in $\hat M$, we know that $\hat X(V_\bow(\tau))$ can be achieved as $\Chi(B'(\bov'))$, where $\bdry B'(\bov')=V_\bow(\tau)$, and $B'$ is properly embedded in $\hat M$.
We want to show that a Seifert lamination in $\hat M$ which achieves this supremum also achieves the supremum among all Seifert laminations in $M$, namely, that it achieves the supremum 
$$X(V_\bow(\tau))=$$
$$\sup\left\{\Chi(B(\bov)):B \embed M,\ \bdry B(\bov)=V_\boy(\theta),  
V_\boy(\theta)\text{ equivalent to } V_\bow(\tau),  B \text{ aspherical}\right\}.$$

So we suppose $V_\bou(\rho)\embed M$ is an arbitrary representative for the lamination link $V_\bow(\tau)$.   This representative may not be irreducible, and may have complementary digons and annuli.   We also suppose that there is a ``better" Seifert lamination $B_0(\bov_0)$ with $\bdry B_0(\bov_0)=V_\bou(\rho)$.  For the arbitrary representative $V_\bou(\rho)$, we can construct a submanifold $\hat M_0$ such that $V_\bou(\rho)$ is peripheral in $\hat M_0$ just as before:  remove a product regular neighborhood $N=V(\rho)\times [0,1]$ from the interior of $M$, and let $V(\rho)\subset\bdry \hat M_0$.  Then $B_0(\bov_0)$ can be properly embedded in $\hat M_0$.  We will modify $V_\bou(\rho)\subset \bdry \hat M_0$  to obtain $V_\bow(\tau)\subset \bdry\hat M_1$.   If $B_0(\bov_0)$ is a maximal-$\Chi$ (or taut) Seifert lamination for $V_\bou(\rho)$ in $ \hat M_0$, we will show that as we modify $V_\bou(\rho)$ we can also modify $B_0(\bov_0)$ without decreasing $\Chi(B_0(\bov_0))$.

To simplify notation, let $P_0$ denote $V_\bou(\rho)\embed M$ and let $P_1$ denote $V_\bow(\tau)$.   We assume these are equivalent, so there is a 1-parameter family $P_t$ of prelaminations, $0\le t\le 1$ with isolated events changing the isotopy class of $P_t$ or its underlying scalloped surface $S_t$.   The isolated events correspond to the following operations:  (i) split on reducing arcs if $P_t$ is reducible or pinch to introduce a reducing arc;  (ii) pinch on a complementary digon or split on an arc to introduce a complementary digon; and (iii) pinch on a complementary annulus or cut on a closed leaf.   Regular splitting or pinching may cause events changing the combinatorial type of the train track $\rho_t$ corresponding to $P_t$, but it does not change the underlying scalloped surface $S_t$.  We are only interested in the events which change $S_t$.  For each $P_t$ we construct $\hat M_t$ so that $P_t$ is peripheral in $\hat M_t$ as before.  Further, we denote by $\hat X_t$ the $X$ function on $\CB(\rho_t)$ when $\rho_t$ is peripheral in $\hat M_t$.    We also simplify notation by writing $\Upsilon_0$ for $B_0(\bov_0)$.   We shall inductively construct  $\Upsilon_t=B_t(\bov_t)$  for $0<t\le 1$  properly embedded in $\hat M_t$ such that $\bdry \Upsilon_t=P_t$ , and $\Upsilon_t$ is taut in $\hat M_t$ ($B_t$ is taut in $\hat M_t$).   The goal is to show that we can find $\Upsilon_t$ so that $\Chi(\Upsilon_t)$ does not decrease, to conclude that $\Upsilon_0$ is not ``better" than $\Upsilon_1$.  

We begin by modifying the family $P_t$ globally.   Whenever there is a reducing arc in $P_t$ which persists in a time interval we cut $P_t$ throughout the interval, and we also modify $\hat M_t$ such that $\hat M_t$ is obtained by removing from $M$ the interior of a product of the form $S_t\times I$.    Whenever we split $P_0$ in this process, we also split $\Upsilon_0$ in a small neighborhood of $\bdry \hat M_0$.    Repeating this as often as possible, we can now assume that $P_t$ is irreducible for all $t$.   Notice that if there is an event which splits on an arc in a closed leaf, this introduces a reducing arc.   Hence, after our modification, the event becomes more radical: now the only event splitting on a portion of a closed leaf instantly splits on the entire closed leaf.

Suppose we have already constructed $\Upsilon_t$ for $t\in [0,r]$ such that it is carried by a taut branched surface $B_t$ and such that $\Chi(\Upsilon_t)$ is a non-decreasing funtion of $t$.  If there is a single event in the interval $(r,s)$, and no event at $t=r$ or $t=s$, we show how to define $\Upsilon_s$ after the event. 

(1)  Suppose the event splits on an arc (or closed curve) of a leaf of $P_r$ to introduce a complementary digon (complementary annulus).  In the case that the event splits on an arc in a leaf, we know immediately that the arc does not lie on a closed leaf.   In any case, we can modify $\Upsilon_r$ to get $\Upsilon_s$ simply by splitting $\Upsilon_r$ in a neighborhood of the splitting arc in $P_r$.   The event, of course, also changes $\hat M_t$ at some time in $(r,s)$.  It does not change the Euler characteristic.  If $\Upsilon_s$ is not maximal-$\Chi$, or not taut, we can easily replace it by a taut $\Upsilon_s$, without decreasing the Euler characteristic.

(2)  Suppose the event pinches a complementary digon $D$ of $P_r$ to yield $P_s$, with a corresponding modification of $\hat M_r$.    This case is more interesting, because $\Upsilon_r$ could intersect the digon $D$.   Before pinching on the digon, one must modify $\Upsilon_r$.     Actually, we will modify a weighted surface which approximates $\Upsilon_r$.  We can choose a weighted surface $\delta F_r$, $\delta>0$, carried by the taut $B_r$ to approximate $\Upsilon_r$ as closely as we wish.   Of course $\delta \bdry F_r$ also approximates $P_r$ and $\bdry F_r$ is carried by the train track $\rho_r$ corresponding to $P_r$.  The surface $F_r$ is taut, therefore incompressible.   We suppose $\bdry F_r$ is transverse to fibers in $N(\rho_r)$ and the digon is an embedded rectangular disk $E$ with two opposite sides in $\bdry_vN(\rho_r)$ and the other two sides in $\bdry_hN(\tau)$.  We can give $E$ a product structure with ``vertical" fibers including the sides in $\bdry_vN(\rho_r)$.  We isotope $F_r$ so it is transverse to $E$.   Then all curves of intersection bound disks in $E$.   A curve $\alpha$ of intersection innermost in $E$ bounds a disk $D$ in $E$.   Since $F_r$ is incompressible, $\alpha$ also bounds a disk $D'$ in $F_r$.  If we surger $F_r$ on $D$, we obtain a new surface which intersects $E$ in fewer curves.  The surgery produces a sphere, which we discard, and $\Chi(\delta F_r)$, is unchanged.  Repeating, we modify $F_r$ until there are no curves of intersection with $E$, and the combined modifications leave $\Chi(\delta F_r)$ unchanged.  Now we can pinch on the digon, with its fibered structure to obtain $N(\rho_s)=N(\rho_r)\union E$.  The modified $F_r$ becomes a surface $F_s$ with $\bdry F_s=\bdry F_r$ carried by $V(\rho_s)$.  We have $\Chi(\delta F_s)\ge \Chi(\delta F_r)$.    By the continuity of $\hat X_r$  on $\CB(\tau_r)$ and the continuity of $\hat X_s$ on $\CB(\tau_s)$ we conclude $\hat X_s(P_s)\ge \hat X_r(P_r)$.  By by Proposition \ref{SumProp}, we can then find a taut $\Upsilon_s$ such that $\Chi(\Upsilon_s)=X(P_s)$.   

\begin{figure}[ht]
\centering
\scalebox{0.75}{\includegraphics{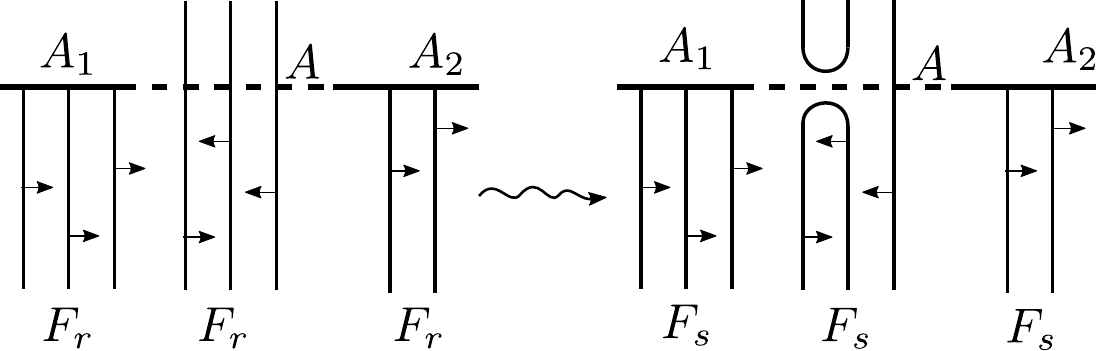}}
\caption{\footnotesize  Opposing transverse orientations in $A$.}
\label{Annulus1}
\end{figure}

{\begin{figure}[H]
\centering
\scalebox{0.75}{\includegraphics{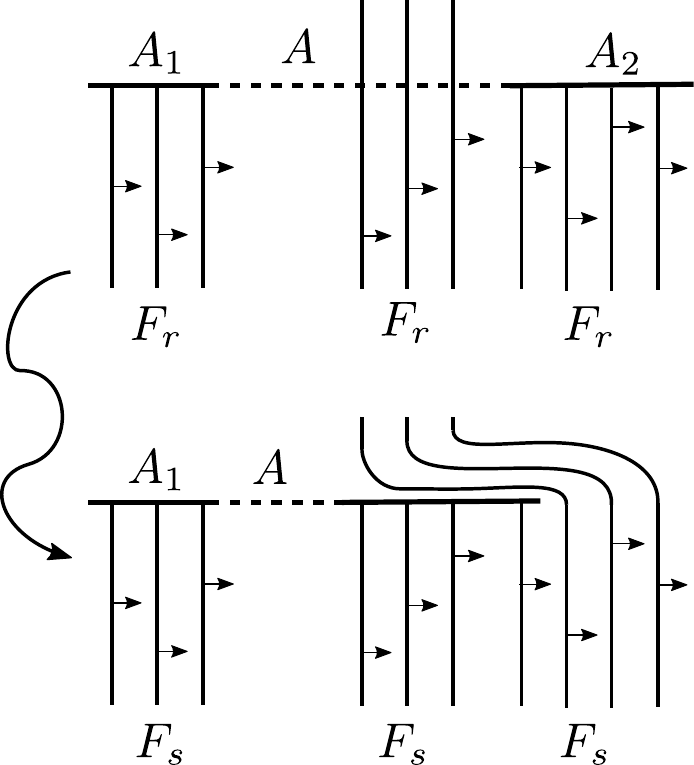}}
\caption{\footnotesize  Consistent transverse orientations in $A$, $A_1$, $A_2$.}
\label{Annulus2}
\end{figure}

\begin{figure}[H]
\centering
\scalebox{0.75}{\includegraphics{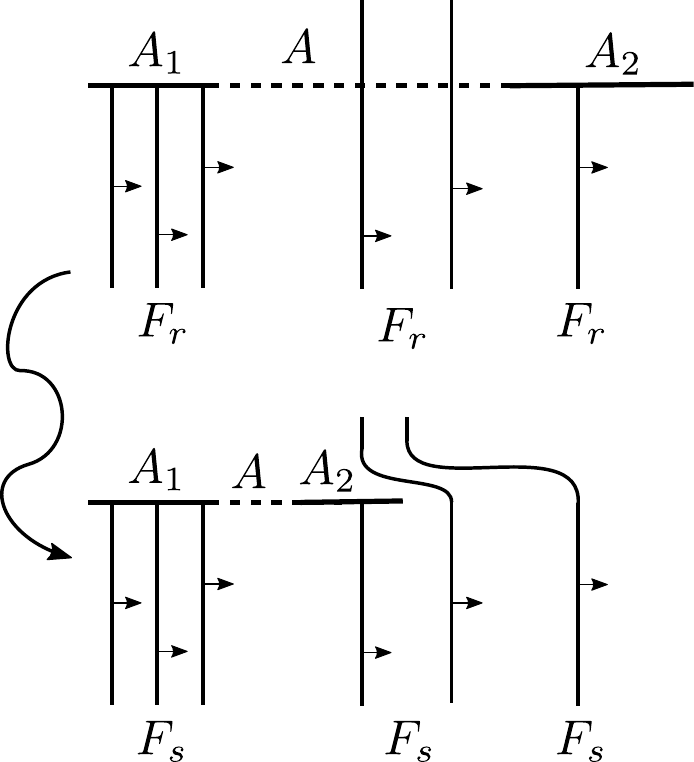}}
\caption{\footnotesize   Consistent transverse orientations in $A$, $A_1$, $A_2$.}
\label{Annulus3}
\end{figure}}

(3)  Suppose the event pinches a complementary annulus $A$ of $P_r$ to yield $P_s$.    Because $P_r$ is an irreducible prelamination, the components of $P_r$ adjacent to $A$ must be foliated annuli $A_1$ and $A_2$, where $A_1$ and $A_2$ are regarded as fibered neighborhoods of closed curve train tracks.  Furthermore, the orientations of leaves in $A_1$ and $A_2$ must be consistent, such that pinching on $A$ gives a new oriented measured foliation on $P_s$. Once again, if $\Upsilon_r$ in $\hat M_r$ is a taut Seifert lamination for$P_r$, carried by a taut branched surface $B_r$, we approximate by a weighted taut surface $\delta F_r$ carried by $B_r$.  The surface $ F_r$ includes boundary curves transverse to fibers in $A_0$ and $A_1$, which are oriented and transversely oriented.  We consider transverse intersections of $F_0$ with $A$.    As in (2), we can eliminate closed curves of $F_0\cap A$ which are inessential in $A$, and obtain a new $F_0$ with only curves of intersection essential in $A$.   The new $\Chi(\delta F_0)$ is unchanged.  Since $F_r$ has a transverse orientation in $\hat M_r$, the curves of intersection also have transverse orientations in $A$.   If not all the curves of intersection have consistent transverse orientations, we find two adjacent ones with opposite orientations and perform an oriented cut-and-paste to ``cancel" the curves of intersection, see Figure \ref{Annulus1}.   (Figures \ref{Annulus1},\ref{Annulus2},\ref{Annulus3},\ref{Annulus4},\ref{Annulus5} become more realistic if they are rotated about a vertical axis on the left or right of the figure.)  This does not change $\Chi(F_r)$ either.   Repeating, we obtain a new $F_r$ so that all (essential) curves of intersection with $A$ have consistent transverse orientations.   There are two cases to consider:  Either the curves of intersection have transverse orientations consistent with those of the curves of $\bdry F_1$ in $A_2$ and $A_1$, or they have orientations that are inconsistent.   In case all the curves in $A$, $A_1$, and $A_2$ have consistent transverse orientations, Figures \ref{Annulus2} and \ref{Annulus3} show schematically how to modify $F_r$ without changing $\Chi$.    The final result of all modifications is $\delta F_s$.   In case all the curves in $A$ have transverse orientations inconsistent with those in $A_1$ and $A_2$ Figures \ref{Annulus4} and \ref{Annulus5} show schematically how to modify $F_r$ without changing  $\Chi$ to construct $F_s$.  
In case the transverse orientations of curves in $A$ are inconsistent with those in both $A_1$ and $A_2$, Figures \ref{Annulus4} and \ref{Annulus5} show schematically how to modify $F_0$ without changing $\Chi(F_0)$.   In both cases, the continuity of $\hat X_r$ and $\hat X_s$ imply that we can choose $\Upsilon_s$ with $\Chi(\Upsilon_s)=\Chi(\Upsilon_r)$ by Proposition \ref{SumProp}.

\begin{figure}[ht]
\centering
\scalebox{0.75}{\includegraphics{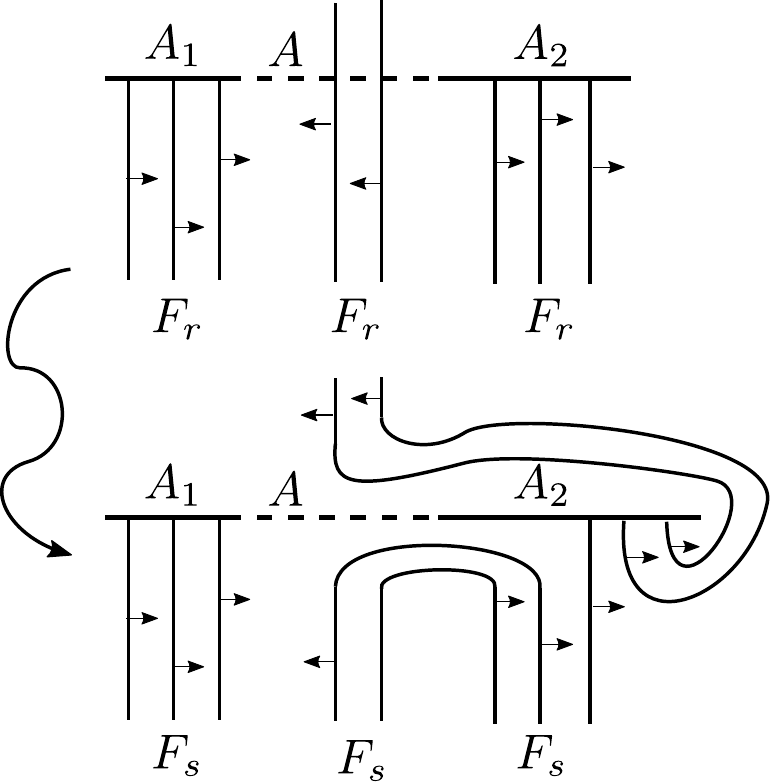}}
\caption{\footnotesize  Opposite transverse orientations in $A$ and $A_2$.}
\label{Annulus4}
\end{figure}

\begin{figure}[H]
\centering
\scalebox{0.75}{\includegraphics{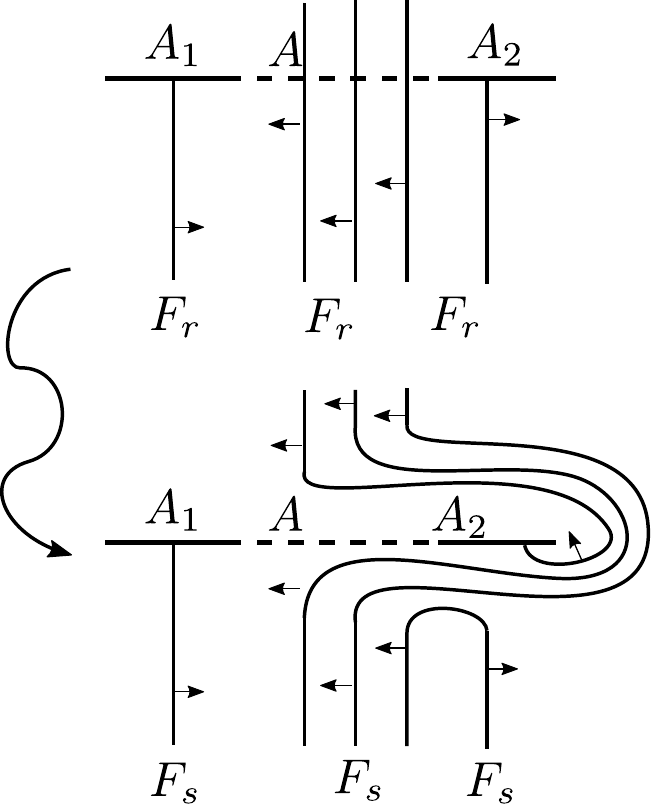}}
\caption{\footnotesize  Opposite transverse orientations in $A$ and $A_2$.}
\label{Annulus5}
\end{figure}

After finitely many induction steps, we have constructed Seifert laminations $\Upsilon_t$ for all $t\in [0,1]$ (except at times $t$ corresponding to events).   We have shown $\Chi(\Upsilon_t)$ is non-decreasing.   This shows that $\hat X_0(P_0)\le \hat X_1(P_1)$.   We cannot find a better Seifert lamination by choosing a different prelamination representative of the lamination link $V_\bow(\tau)$.
\end{proof}

\begin{proof}[Proof of Theorem \ref{Continuity}]  Theorem \ref{Extend} says that that a given a lamination link represented as $V_\bow(\tau)$ which is irreducible, adigonal anannular, there is a maximal-$\Chi$ Seifert lamination in the track exterior $\hat M$.  The link $V_\bow(\tau)\subset \hat M$ is peripheral in $\hat M$.   Now we take a slightly different point of view.   Given a framing $V(\tau)$ of a train track $\tau\embed M$, we consider all the invariant weight vectors $\bow$ such that $V_\bow(\tau)$ is irreducible, and can therefore be regarded as peripheral in the same $\hat M$.    This is the subspace $\CIB(V(\tau))$ of $\C(\tau)$.   Since $V(\tau)\embed \bdry \hat M$ and $\bow\in \CIB(\tau)$, a Seifert lamination for $V_\bow(\tau)$ is maximal-$\Chi$ or taut in $\hat M$ if and only if it is taut in $M$.   Now by Proposition \ref{SumProp}, there exist finitely many $\tau$-taut branched surfaces.  $\{T_1,T_2,\ldots T_p\}$ of $\tau$-taut branched surfaces in $\hat M$ such that for every $\bow\in\CB(\tau)$,  there is a taut $T_i(\bov)$ such that $\bdry T_i(\bov)=\tau(\bow)$.   This shows that $X$ is continuous (and piecewise linear) on $\CIB(\tau)$, where $X$ refers to the function defined for Seifert laminations in $M$.  \end{proof}

\begin{proof}[Proof of Theorem \ref{Trivial}]  Suppose first $V_\bow(\tau)$ is peripheral in $M$.  A trivial leaf $\alpha$ of the lamination link, in this case, is any leaf which is null-homotopic in $M$.   If $B(\bov)$ represents a taut Seifert lamination for $\tau(\bow)$, its leaves are $\pi_1$-injective.   This implies that the leaf $\ell$ of $B(\bov)$ containing $\alpha$ must be a disk.   

Now suppose $V_\bow(\tau)$ is not peripheral.  In this case a trivial leaf is a leaf which bounds a disk whose interior is disjoint from the lamination link.  Suppose $V_\bow(\tau)$ is irreducible, adigonal, and anannular.  This means that trivial leaves lie in annulus components of  $V(\tau)$ corresponding to closed curves of $\tau$.  Then there is a taut Seifert lamination $B(\bov)$ in the track exterior $\hat M$.   Now we are dealing with a peripheral lamination link in $\hat M$, so the above argument shows every trivial leaf bounds a disk in $\hat M$.
\end{proof}

\bibliographystyle{amsplain}
\bibliography{ReferencesUO3}
\end{document}